\newtheorem{thm}{Theorem}[section]
\newtheorem{prop}[thm]{Proposition}
\newtheorem{cor}[thm]{Corollary}
\newtheorem{lem}[thm]{Lemma}
\theoremstyle{definition}
\newtheorem{defn}[thm]{Definition}
\theoremstyle{remark}
\newtheorem{rem}[thm]{Remark}
\newtheorem{ex}[thm]{Example}
\newcommand{\K}{{\mathbb K}}
\newcommand{\Z}{{\mathbb Z}}
\newcommand{\Q}{{\mathbb Q}}
\newcommand{\D}{\text{D}}
\newcommand{\DD}{\text{\em D}}
\newcommand{\mapright}[1]{%
 \smash{\mathop{%
  \hbox to 1cm{\rightarrowfill}}\limits_{#1}}}
\newcommand{\maprightd}[2]{%
 \smash{\mathop{%
  \hbox to 1.2cm{\rightarrowfill}}\limits^{#1}\limits_{#2}}}
\newcommand{\mapleft}[1]{%
 \smash{\mathop{%
  \hbox to 1cm{\leftarrowfill}}\limits_{#1}}}
\newcommand{\mapleftu}[1]{%
 \smash{\mathop{%
  \hbox to 0.8cm{\leftarrowfill}}\limits^{#1}}}
\newcommand{\maprightu}[1]{%
 \smash{\mathop{%
  \hbox to 1cm{\rightarrowfill}}\limits^{#1}}}
\newcommand{\maprightud}[2]{%
 \smash{\mathop{%
  \hbox to 1cm{\rightarrowfill}}\limits^{#1}_{#2}}}
\newcommand{\mapleftud}[2]{%
 \smash{\mathop{%
  \hbox to 1cm{\leftarrowfill}}\limits^{#1}_{#2}}}
\newcounter{eqn}[section]
\def\theeqn{\textnormal{(\thesection.\arabic{eqn})}}
\def\eqnlabel#1{%
  \refstepcounter{eqn}%
  \label{#1}%
  \leqno{\theeqn}}
\begin{document}

\title[Derived string topology]{
Derived string topology and the Eilenberg-Moore spectral sequence  
}

\footnote[0]{{\it 2010 Mathematics Subject Classification}: 55P50, 55P35, 55T20 \\
{\it Key words and phrases.} String topology, Gorenstein space, differential torsion product, Eilenberg-Moore spectral sequence. 


Department of Mathematical Sciences, 
Faculty of Science,  
Shinshu University,   
Matsumoto, Nagano 390-8621, Japan   
e-mail:{\tt kuri@math.shinshu-u.ac.jp} 

D\'epartement de Math\'ematiques
Facult\'e des Sciences,
Universit\'e d'Angers,
49045 Angers,
France 
e-mail:{\tt luc.menichi@univ-angers.fr}

Department of Mathematical Sciences, 
Faculty of Science,  
Shinshu University,   
Matsumoto, Nagano 390-8621, Japan   
e-mail:{\tt naito@math.shinshu-u.ac.jp} 
}

\author{Katsuhiko KURIBAYASHI, Luc MENICHI and Takahito Naito}
   
\maketitle


\begin{abstract}
Let $M$ be any simply-connected Gorenstein space over any field.
F\'elix and Thomas have extended to simply-connected Gorenstein spaces, the loop (co)products of Chas and Sullivan
on the homology of the free loop space $H_*(LM)$.
We describe these loop (co)products in terms  of 
the torsion and extension functors by developing string topology in appropriate derived categories. 
As a consequence, we show that the Eilenberg-Moore spectral sequence converging to the loop homology of 
a Gorenstein space admits a multiplication 
and a comultiplication with shifted degree which are compatible with the loop product and the loop coproduct of its target, respectively.

We also define a generalized cup product on the Hochschild cohomology $HH^*(A,A^\vee)$ of a commutative
Gorenstein algebra $A$ and show that over $\mathbb{Q}$, $HH^*(A_{PL}(M),A_{PL}(M)^\vee)$ is isomorphic
as algebras to $H_*(LM)$. Thus, when $M$ is a Poincar\'e duality space, we recover the
isomorphism of algebras $\mathbb{H}_*(LM;\mathbb{Q})\cong HH^*(A_{PL}(M),A_{PL}(M))$ of F\'elix and Thomas.
\end{abstract}

\section{Introduction}
There are several spectral sequences concerning main players in string topology 
\cite{C-J-Y, C-L, LeB, S, Kuri2011}. Cohen, Jones and Yan \cite{C-J-Y} 
have constructed a loop algebra spectral sequence which is of the Leray-Serre type. 
The Moore spectral sequence converging to the Hochschild cohomology ring 
of a differential graded algebra  is endowed with an algebra structure \cite{F-T-VP}  
and moreover a Batalin-Vilkovisky algebra structure \cite{Kuri2011}, 
which are compatible with such a structure of the target.  
Very recently, Shamir \cite{S} has constructed a Leray-Serre type spectral sequence converging to 
the Hochschild cohomology ring of a differential graded algebra. 
Then as announced by McClure~\cite[Theorem B]{McClure}, one might expect that the Eilenberg-Moore 
spectral sequence (EMSS), which converges to the loop homology of a closed oriented manifold and 
of a more general Gorenstein space,   
enjoys a multiplicative structure  corresponding to the loop product.

The class of Gorenstein spaces 
contains Poincar\'e duality spaces, for example closed oriented manifolds, and Borel constructions, 
in particular, the classifying spaces of connected Lie groups; see \cite{FHT_G, Murillo, KMnoetherian}. 
In \cite{F-T}, F\'elix and Thomas develop string topology on Gorenstein spaces.
As seen in string topology, the shriek map (the wrong way map) plays an important role when defining string operations. 
Such a map for a Gorenstein space appears in an appropriate derived category.  
Thus we can discuss string topology due to Chas and Sullivan in the more general setting with  
cofibrant replacements of the singular cochains on spaces. 

In the remainder of this section, our main results are surveyed. 
We describe explicitly the loop (co)products for a Gorenstein space in terms of 
the differential torsion product and the extension functors; see Theorems  \ref{thm:torsion_product}, 
\ref{thm:torsion_coproduct} and \ref{thm:freeloopExt}. 
The key idea of the consideration comes from the general setting 
in \cite{F-T} for defining string operations mentioned above. 
Thus our description of the loop (co)product with 
derived functors fits {\it derived string topology}, namely the framework of string topology due to F\'elix and Thomas. 
Indeed, according to expectation, the full descriptions of the products with derived functors 
permits us to give the EMSS (co)multiplicative structures  
which are compatible with the dual to the loop (co)products  of its target; see Theorem \ref{thm:EMSS}.   

By dualizing the EMSS,  we obtain a new spectral sequence converging to 
the Chas-Sullivan relative loop homology algebra with coefficients in a field $\K$ of a Gorenstein space $N$ over a space $M$. 
We observe that the $E_2$-term of the dual EMSS is represented by the Hochschild cohomology ring of $H^*(M; \K)$ with coefficients in 
the shifted homology of $N$; 
see Theorems \ref{thm:loop_homology_ss}. 
It is conjectured that there is an isomorphism of graded algebras between the loop homology of $M$
and the Hochschild cohomology of the singular cochains on $M$. But over $\mathbb{F}_p$,
even in the case of a simply-connected closed orientable manifold, there is no complete written proof
of such an isomorphism of algebras (See~\cite[p. 237]{F-T-VP} for details).
Anyway, even if we assume such isomorphism, it is not clear that the spectral sequence
obtained by filtering Hochschild cohomology is isomorphic to the dual EMSS
although these two spectral sequences have the same $E_2$ and $E_\infty$-term.
It is worth stressing that the EMSS in Theorem \ref{thm:EMSS} is applicable to each space in the more wide class of Gorenstein spaces and 
is moreover endowed with both the loop product and the loop coproduct.  
Let $N$ be a simply-connected space whose cohomology 
is of finite dimension and is generated by a single element. Then 
explicit calculations of the dual EMSS made in the sequel \cite{K-M-N2} to this paper 
yield that  the loop homology of $N$ is isomorphic 
to the Hochschild cohomology of $H^*(N; \K)$ as an algebra. 
This illustrates computability of our spectral sequence 
in Theorem \ref{thm:loop_homology_ss}.

With the aid of the torsion functor descriptions of the loop (co)products, we see that 
the composite 
$(\text{\it the loop product})\circ(\text{\it the loop coproduct})$ is trivial for a 
simply-connected Poincar\'e duality space; see Theorem \ref{thm:loop(co)product}. 
Therefore, the same argument as in the proof of \cite[Theorem A]{T} deduces that 
if string operations on a Poincar\'e duality space 
gives rise to a 2-dimensional TQFT, then all operations associated to surfaces of genus 
at least one vanish. For a more general Gorenstein space, an obstruction for 
the composite to be trivial can be found in a hom-set, namely 
the extension functor, in an appropriate derived category; see Remark \ref{rem:obstruction}.   
This small but significant result also asserts an advantage of derived string topology. 
 
It is also important to mention that in the Appendices, 
we have paid attention to signs and extended the properties of shriek maps on Gorenstein spaces
given in \cite{F-T}, in order to prove that the loop product is associative and commutative for
Poincar\'e duality space.


\section{Derived string topology and main results} 
The goal of this section is to state our results in detail. The proofs are found in Sections 3 to 7.   

We begin by recalling the most prominent result on shriek maps due to F\'elix and Thomas, 
which supplies string topology with many homological and homotopical algebraic tools. 
Let $\K$ be a field of arbitrary characteristic. 
In what follows, we denote by $C^*(M)$ and $H^*(M)$ 
the normalized singular cochain algebra of a space $M$ with coefficients in $\K$ 
and its cohomology, respectively. 
For a differential graded algebra $A$, let $\D(\text{Mod-}A)$ and $\D(A\text{-Mod})$ be 
the derived categories of right $A$-modules and left $A$-modules, respectively. 
Unless otherwise explicitly stated, it is assumed that a space has the homotopy type 
of a CW-complex whose homology with coefficients in an underlying field is of finite type.

Consider a pull-back diagram ${\mathcal F}$: 
$$
\xymatrix@C30pt@R15pt{ 
X \ar[r]^{g} \ar[d]_q & E \ar[d]^p \\
N \ar[r]_f & M}
$$ 
in which $p$ is a fibration over a simply-connected Poincar\'e duality space $M$ of dimension $m$ with 
the fundamental class $\omega_M$ and $N$ is 
a Poincar\'e duality space of dimension $n$ with the fundamental class $\omega_N$. 

\begin{thm}\label{thm:main_F-T}{\em (}\cite{L-SaskedbyFelix},\cite[Theorems 1 and 2]{F-T}{\em )} 
With the notation above there exist unique elements
$$
f^! \in \text{\em Ext}^{m-n}_{C^*(M)}(C^*(N), C^*(M)) \ \ \text{and} \  \ g^! \in \text{\em Ext}^{m-n}_{C^*(E)}(C^*(X), C^*(E))
$$
such that $H^*(f^!)(\omega_N)=\omega_M$ and in $\DD(\text{\em Mod-}C^*(M))$, the following diagram is commutative 
$$
\xymatrix@C25pt@R15pt{ 
C^*(X) \ar[r]^(0.4){g^!} & C^{*+m-n}(E)  \\
C^*(N) \ar[r]_(0.4){f^!} \ar[u]^{q^*}& C^{*+m-n}(M).  \ar[u]_{p^*}
}
$$ 
\end{thm}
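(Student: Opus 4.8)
The plan is to construct $f^!$ and $g^!$ by exploiting the Gorenstein/Poincaré duality property, which provides shifted isomorphisms in the relevant derived categories. First I would recall that a Poincaré duality space $N$ of dimension $n$ satisfies $\mathrm{Ext}^*_{C^*(N)}(\K, C^*(N)) \cong \K$ concentrated in degree $n$, with a preferred generator corresponding to the fundamental class $\omega_N$; this is the Gorenstein condition, and the analogous statement holds for $M$. The key observation is that the wrong-way map $f^!$ should be characterized as the unique class in $\mathrm{Ext}^{m-n}_{C^*(M)}(C^*(N), C^*(M))$ inducing $\omega_N \mapsto \omega_M$ under the map $H^*(f^!)$. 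To produce such an element, I would use the composite $C^*(M) \to C^*(N)$ (the restriction $f^*$) together with the dualizing data: via Poincaré duality on $N$, the module $C^*(N)$ over $C^*(M)$ is self-dual up to a degree shift, so $\mathrm{RHom}_{C^*(M)}(C^*(N), C^*(M))$ can be identified with a shift of $C^*(N)$ itself, and the desired $f^!$ is read off from the unit of this duality.

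The uniqueness assertion is where I would spend care. The claim is that $f^!$ is the \emph{unique} element of the Ext group satisfying the fundamental-class normalization $H^*(f^!)(\omega_N) = \omega_M$. To prove this I would show that the relevant $\mathrm{Ext}^{m-n}_{C^*(M)}(C^*(N), C^*(M))$ is one-dimensional, or at least that the evaluation map sending a class $\varphi$ to $H^*(\varphi)(\omega_N) \in H^m(M)$ is injective on the $(m-n)$-graded piece. This should follow from the Eilenberg–Moore spectral sequence or a direct change-of-rings computation expressing this Ext group in terms of $\mathrm{Ext}_{C^*(M)}(\K, C^*(M))$, using that $N \to M$ and the Poincaré duality of both spaces collapse the group to the expected rank. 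The analogue for $g^!$ follows identically, replacing $M$ by $E$ and $N$ by $X$, since $p$ is a fibration and the pullback $X = N \times_M E$ inherits the duality data fiberwise.

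For the commutativity of the square in $\D(\text{Mod-}C^*(M))$, the plan is to use naturality of the construction. Both composites $g^! \circ q^*$ and $p^* \circ f^!$ are morphisms $C^*(N) \to C^{*+m-n}(E)$ in the derived category of $C^*(M)$-modules. I would identify each as an element of a suitable Ext group and then invoke the same uniqueness principle: both composites induce the same effect on the relevant fundamental classes (tracking $\omega_N \mapsto \omega_M$ through the pullback structure), so if the target Ext group is again rank one in the appropriate degree, the two maps must coincide. Here the pullback property of $\mathcal{F}$ is essential, since it identifies $C^*(X)$ with the derived tensor product $C^*(N) \otimes^{\mathbb{L}}_{C^*(M)} C^*(E)$ and makes the shriek maps compatible with base change.

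The hard part will be the uniqueness and the derived base-change identification simultaneously. Establishing that the shriek class is uniquely pinned down by its action on fundamental classes requires controlling the full Ext group, not merely exhibiting one element; this is delicate because $C^*(M)$ is only a differential graded algebra (not formal in general), so I cannot argue purely at the level of cohomology and must work with honest cofibrant or semifree resolutions. Keeping track of signs when passing between the contravariant $\mathrm{Hom}$-functor, the degree shift by $m-n$, and the duality isomorphisms is the secondary obstacle, and indeed the excerpt's authors note in their introduction that they devote appendices precisely to sign conventions for shriek maps on Gorenstein spaces.
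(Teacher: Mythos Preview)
The paper does not prove this theorem; it is quoted from F\'elix--Thomas \cite{F-T}. Nonetheless, the argument structure is visible in the appendix (proof of Theorem~\ref{thm:newThm3}) and in the remarks following Theorem~\ref{thm:ext}, and your proposal diverges from it in a way that creates a genuine gap.

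Your treatment of $f^!$ is essentially correct: the Ext group $\text{Ext}^{m-n}_{C^*(M)}(C^*(N),C^*(M))$ is indeed one-dimensional (this is \cite[Lemma~1]{F-T}, which you gesture at), and $f^!$ is the generator normalized by $H^*(f^!)(\omega_N)=\omega_M$. But your construction of $g^!$ is wrong. You write that ``the analogue for $g^!$ follows identically, replacing $M$ by $E$ and $N$ by $X$, since $\ldots$ the pullback $X=N\times_M E$ inherits the duality data fiberwise.'' This is not so: $E$ and $X$ are \emph{not} Poincar\'e duality spaces in general (e.g.\ $E=M^I$, $X=LM$), they carry no fundamental classes, and there is no one-dimensional Ext group over $C^*(E)$ to invoke. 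Consequently your plan for the commutativity of the square --- comparing the effect of the two composites on ``the relevant fundamental classes'' --- cannot be carried out, because there are no such classes on the $E$-side.

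The actual mechanism (see the proof of Theorem~\ref{thm:newThm3}(2) in the appendix, or \cite[Theorem~2]{F-T}) is the opposite of what you propose. One first proves, using the Eilenberg--Moore identification $C^*(X)\simeq C^*(N)\otimes^{\mathbb L}_{C^*(M)}C^*(E)$ that you do mention, that precomposition with $q^*$ induces an isomorphism
\[
\text{Ext}^*_{C^*(E)}(C^*(X),C^*(E))\;\xrightarrow{\ \cong\ }\;\text{Ext}^*_{C^*(M)}(C^*(N),C^*(E)).
\]
Then $g^!$ is \emph{defined} to be the unique preimage of $p^*\circ f^!$ under this isomorphism. The square commutes by construction, and uniqueness of $g^!$ is the injectivity of this base-change map. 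So $g^!$ is not built from duality data on $X$ and $E$ and then checked against the square; it is produced directly from $f^!$ via base change.
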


Let $A$ be a differential graded augmented algebra over $\K$.  We call $A$ a {\it Gorenstein algebra} of dimension $m$ 
if 
$$
\dim \text{Ext}_A^*(\K, A) = 
\left\{
\begin{array}{l}
0 \ \ \text{if} \ *\neq m,  \\
1  \ \ \text{if} \  *= m. 
\end{array}
\right.
$$ 
A path-connected space $M$ is called a $\K$-{\it Gorenstein space}  (simply, Gorenstein space) 
of dimension $m$ if the normalized singular cochain 
algebra $C^*(M)$ with coefficients in $\K$  is a Gorenstein algebra of dimension $m$. 
We write $\dim M$ for the dimension $m$.  

The result \cite[Theorem 3.1]{FHT_G} yields that a simply-connected Poincar\'e duality space, for example 
a simply-connected closed orientable manifold, is Gorenstein.  
The classifying space $BG$ of connected Lie group $G$ and the Borel construction $EG\times_GM$ for 
a simply-connected Gorenstein space $M$ with $\dim H^*(M; \K)< \infty$ on which $G$ acts  
are also examples of Gorenstein spaces; see \cite{FHT_G, Murillo, KMnoetherian}.  
Observe that, for a closed oriented manifold $M$, 
$\dim M$ coincides with the ordinary dimension of $M$ and that for the classifying space $BG$ of a connected Lie group, 
$\dim BG= -\dim G$. Thus the dimensions of Gorenstein spaces may become negative.    

The following theorem enables us to generalize the above result concerning shriek maps 
on a Poincar\'e duality space to that on a Gorenstein space. 

\begin{thm}{\em(}\cite[Theorem 12]{F-T}{\em)}\label{thm:ext}
Let $X$ be a simply-connected $\K$-Gorenstein space of dimension $m$ whose cohomology 
with coefficients in  $\K$ is of finite type. 
Then 
$$
\text{\em Ext}^*_{C^*(X^n)}(C^*(X), C^*(X^n)) \cong H^{*-(n-1)m}(X), 
$$
where $C^*(X)$ is considered a $C^*(X^n)$-module via the diagonal map $\Delta : X \to X^n$.   
\end{thm}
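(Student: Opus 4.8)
The plan is to reduce the statement to the Gorenstein condition on $C^*(X)$ itself, by a Künneth reduction followed by a change of rings along the diagonal. Write $A=C^*(X)$. Since $H^*(X)$ is of finite type over the field $\K$, the Künneth map $A^{\otimes n}\xrightarrow{\ \simeq\ }C^*(X^n)$ is a quasi-isomorphism of differential graded algebras under which $\Delta^*$ corresponds to the iterated multiplication $\mu_n\colon A^{\otimes n}\to A$. As $\mathrm{Ext}$ depends only on the quasi-isomorphism type of the algebra, this identifies the left-hand side with $\mathrm{Ext}^*_{A^{\otimes n}}(A,A^{\otimes n})$, where $A$ carries the $A^{\otimes n}$-module structure obtained by restriction along $\mu_n$. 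I set $\Xi:=\mathbf{R}\mathrm{Hom}_{A^{\otimes n}}(A,A^{\otimes n})$, viewed as an object of $\D(A\text{-Mod})$ through the $(A^{\otimes n},A)$-bimodule structure on the diagonal module $A$.

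The first input is that $A^{\otimes n}$ is again Gorenstein, of dimension $nm$: from the hypothesis that $A$ is Gorenstein of dimension $m$ and a Künneth isomorphism for the trivial module one gets $\mathbf{R}\mathrm{Hom}_{A^{\otimes n}}(\K,A^{\otimes n})\cong\bigl(\mathbf{R}\mathrm{Hom}_A(\K,A)\bigr)^{\otimes n}$, which is one-dimensional over $\K$ and concentrated in degree $nm$. The second input is the coinduction adjunction for restriction along $\mu_n$: the total augmentation of $A^{\otimes n}$ is precisely the restriction along $\mu_n$ of the $A$-module $\K$ (because $\varepsilon\circ\mu_n=\varepsilon^{\otimes n}$), so the standard adjunction $\mathbf{R}\mathrm{Hom}_{A^{\otimes n}}(\mu_n^*\K,-)\cong\mathbf{R}\mathrm{Hom}_A(\K,\mathbf{R}\mathrm{Hom}_{A^{\otimes n}}(A,-))$ gives a natural isomorphism
\[
\mathbf{R}\mathrm{Hom}_{A^{\otimes n}}(\K,A^{\otimes n})\ \cong\ \mathbf{R}\mathrm{Hom}_A(\K,\Xi).
\]
Combining the two, $\mathbf{R}\mathrm{Hom}_A(\K,\Xi)$ is one-dimensional and concentrated in degree $nm$.

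It then remains to promote this to an identification of $\Xi$ itself. First I would check that $\Xi$ has homology of finite type over $A$; this is where I would invoke the Eilenberg–Moore spectral sequence converging to $\mathrm{Ext}^*_{A^{\otimes n}}(A,A^{\otimes n})$, whose $E_2$-term is assembled from $\mathrm{Ext}$ over $H^*(X^n)$ and is of finite type because $H^*(X)$ is. With this finiteness in hand, I would invoke the rigidity supplied by the Gorenstein property of $A$: a finite-type module $\Xi$ whose socle $\mathbf{R}\mathrm{Hom}_A(\K,\Xi)$ is one-dimensional and concentrated in a single degree must be a shift of the free rank-one module, so $\Xi\simeq\Sigma^{j}A$; matching degrees against $\mathbf{R}\mathrm{Hom}_A(\K,A)\cong\Sigma^{m}\K$ forces $j=(n-1)m$. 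Passing to cohomology yields $\mathrm{Ext}^*_{A^{\otimes n}}(A,A^{\otimes n})\cong H^{*-(n-1)m}(X)$, with the $H^*(X)$-module structure as claimed.

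The main obstacle is this last step: establishing the Gorenstein rigidity (equivalently, that the relative dualizing module of the diagonal is free of rank one) and controlling the $A$-module structure so that the answer is $H^*(X)$ and not its dual. I expect to handle the module-structure bookkeeping using graded-commutativity of $A$ to identify the two a priori different $A$-actions on $\Xi$, and to handle the rigidity by a minimal-model/Nakayama argument over the augmented algebra $A$, where finite type is essential (note that $\dim_\K H^*(X)$ may well be infinite, for instance when $X=BG$, so one cannot simply dualize a finite-dimensional model). Signs would be tracked throughout as in the Appendices.
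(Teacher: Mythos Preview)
The paper does not give its own proof of this theorem; it is quoted verbatim from F\'elix--Thomas \cite[Theorem 12]{F-T} and used as a black box (the only related argument in the paper is Theorem~\ref{ext Gorenstein commutatif}, whose proof simply says that the F--T argument ``obviously works'' for a genuinely commutative algebra). So there is nothing to compare against except the shape of the F--T argument as reflected in the paper's references to \cite[Lemma~1 and Corollary p.~448]{F-T}.

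That said, your proposal has two real problems. First, the ``rigidity'' step --- that a bounded-below finite-type $A$-module $\Xi$ with $\mathbf{R}\mathrm{Hom}_A(\K,\Xi)$ one-dimensional must be a shift of $A$ --- is not a standard lemma you can invoke; it is essentially equivalent in difficulty to the theorem you are trying to prove. Your proposed ``minimal-model/Nakayama argument'' is not a proof sketch but a hope: Nakayama-type arguments detect freeness through $\K\otimes^{\mathbb L}_A\Xi$ (i.e.\ $\mathrm{Tor}$), not through $\mathbf{R}\mathrm{Hom}_A(\K,\Xi)$, and the two are not interchangeable unless you already know some duality statement of exactly the kind you are after. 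Note also that without the bounded-below hypothesis the claim is false (take $\Xi=A^\vee$ for $A=C^*(BG)$), so you must at minimum establish and use that hypothesis carefully; you gesture at this via the EMSS but do not carry it out. Second, you repeatedly rely on ``graded-commutativity of $A$'' to identify the two $A$-actions on $\Xi$ and to make the adjunction bookkeeping work. But $A=C^*(X)$ is \emph{not} a graded-commutative DGA; it is only commutative up to coherent homotopy (it is a strongly homotopy commutative, or $E_\infty$, algebra). This is precisely the subtlety that forces F\'elix--Thomas to work with shc structures, and it is why the paper separately states the genuinely commutative version as Theorem~\ref{ext Gorenstein commutatif}. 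Your reduction to $A^{\otimes n}$ and your adjunction $\mathbf{R}\mathrm{Hom}_{A^{\otimes n}}(\mu_n^*\K,-)\cong\mathbf{R}\mathrm{Hom}_A(\K,\mathbf{R}\mathrm{Hom}_{A^{\otimes n}}(A,-))$ both implicitly use that $\mu_n$ is a DGA map, which it is not for singular cochains.
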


We denote by $\Delta^!$ the map in $\D(\text{Mod-}C^*(X^n))$ which 
corresponds to a generator of 
$\text{Ext}^{(n-1)m}_{C^*(X^n)}(C^*(X), X^*(X^n)) \cong H^0(X)$. 
Then, for a Gorenstein space $X$ of dimension $m$ and a fibre square 
$$
\xymatrix@C25pt@R15pt{ 
E' \ar[r]^{g} \ar[d]_{p'} & E \ar[d]^p \\
X^{} \ar[r]_\Delta & X^{n} , } 
$$ 
there exists a unique map $g^!$ in $\text{Ext}_{C^*(E)}^{(n-1)m}(C^*(E'), C^*(E))$ which fits into 
the commutative diagram in $\D(\text{Mod-}C^*(X^n))$
$$
\xymatrix@C25pt@R15pt{ 
C^*(E') \ar[r]^{g^!} & C^*(E)  \\
C^*(X^{}) \ar[r]_{\Delta^!} \ar[u]^{(p')^*}& C^*(X^{n}) . \ar[u]_{p^*}
}
$$ 
We remark that the result follows from the same proof as that of Theorem \ref{thm:main_F-T}. 

Let $\xymatrix@C15pt@R15pt{K & \ar[l]_{f} A \ar[r]^{g} & L}$  be a diagram in the category of  
differential graded algebras (henceforth called DGA's). We consider $K$ and $L$ 
right and left modules over $A$ via maps $f$ and $g$, respectively.  
Then the differential torsion product $\text{Tor}_A(K, L)$ is 
denoted by $\text{Tor}_A(K, L)_{f, g}$ when the actions are emphasized.

We recall here the Eilenberg-Moore map. 
Consider the pull-back diagram ${\mathcal F}$ mentioned above, in which 
$p$ is a fibration and $M$ is a simply-connected space. Let 
$\varepsilon : F \to C^*(E)$ be a left semi-free resolution of $C^*(E)$ in 
$C^*(M)\text{-Mod}$ the category of left $C^*(M)$-modules. Then the Eilenberg-Moore map 
$$
EM : \text{Tor}_{C^*(M)}^*(C^*(N), C^*(E))_{f^*, p^*} =H(C^*(N)\otimes_{C^*(M)}F) 
\longrightarrow H^*(X)
$$
is defined by 
$
EM(x\otimes_{C^*(M)} u) = q^*(x)\smile (g^*\varepsilon(u))
$ 
for $x\otimes_{C^*(M)} u \in C^*(N)\otimes_{C^*(M)}F$. 
Observe that in the same way, we can define the Eilenberg-Moore map 
by using a semi-free resolution of $C^*(N)$ as a right $C^*(M)$-module.
We see that the map $EM$ is an isomorphism of graded algebras with respect to the cup
products; see \cite{G-M} for example.   
In particular, for a simply-connected space $M$,
consider the commutative diagram,
$$
\xymatrix@C30pt@R15pt{ 
LM  \ar[r]^{} \ar[d]_{ev_0} & M^I \ar[d]_{p=(ev_0,ev_1)}
& M\ar[l]_{\sigma}^\simeq\ar[ld]^-{\Delta}\\
M \ar[r]_-\Delta & M\times M} 
$$ 
where $ev_i$ stands for the evaluation map at $i$
and $\sigma:M\buildrel{\simeq}\over\hookrightarrow M^I$ for the inclusion
of the constant paths.
We then obtain the composite $EM' :$
$$
\xymatrix@1{
H^*(LM) &
\text{Tor}_{C^*(M^{\times 2})}^*(C^*M, C^*M^I)_{\Delta^*,p^*}\ar[l]_-{EM}^-\cong \ar[r]^{\text{Tor}_{1}(1,\sigma^*)}_\cong
& \text{Tor}_{C^*(M^{\times 2})}^*(C^*M, C^*M)_{\Delta^*,\Delta^*}
}.
$$
Our first result states that the torsion functor $ \text{Tor}_{C^*(M^{\times 2)}}^*(C^*(M), C^*(M))_{\Delta^*,\Delta^*}$
admits (co)products which are compatible with $EM'$.

In order to describe such a result, 
we first recall the definition of the loop product on a simply-connected Gorenstein space. Consider the diagram 
$$
\xymatrix@C25pt@R15pt{ 
LM \ar[d]_{ev_0} & LM\times_M LM \ar[l]_(0.6){Comp} \ar[d] \ar[r]^q & LM \times LM \ar[d]^{(ev_0,ev_1)} \\
M \ar@{=}[r] & M \ar[r]_{\Delta} & M\times M, } 
\eqnlabel{add-0}
$$ 
where the right-hand square is the pull-back of the diagonal map $\Delta$, $q$ is the inclusion and 
$Comp$ denotes the concatenation of loops.  
By definition the composite 
$$
q^!\circ (Comp)^* : C^*(LM) \to C^*(LM\times_M LM) \to C^*(LM \times LM)
$$ 
induces the dual to the loop product $Dlp$ on $H^*(LM)$; see \cite[Introduction]{F-T}. 
We see that $C^*(LM)$ and $C^*(LM\times LM)$ are $C^*(M\times M)$-modules via the map 
 $ev_0\circ \Delta$ and $(ev_0, ev_1)$, respectively. Moreover since $q^!$ is a morphism of  $C^*(M\times M)$-modules, it follows that 
so is $q^!\circ (Comp)^*$.  
The proof of Theorem \ref{thm:main_F-T} states that the map $q^!$ is obtained extending the shriek map $\Delta^!$, which is first given,   
in the derived category $\D(\text{Mod-}C^*(M\times M))$. This fact allows us to formulate  
$q^!$ in terms of differential  torsion functors. 

\begin{thm} \label{thm:torsion_product}
Let $M$ be a simply-connected Gorenstein space of dimension $m$.
Consider the comultiplication $\widetilde{(Dlp)}$ given by the composite
$$
{\footnotesize
\xymatrix@C8pt@R15pt{ 
\text{\em Tor}^*_{C^*(M^{2})}(C^*(M), C^*(M))_{\Delta^*, \Delta^*}
\ar[r]^-{\text{\em Tor}_{p_{13}^*}(1, 1)}  \ar@{.>}[dd]                  & 
             \text{\em Tor}^*_{C^*(M^{3})}(C^*(M), C^*(M))_{((1\times \Delta)\circ \Delta)^*, ((1\times \Delta)\circ \Delta)^*}\\
  &               \text{\em Tor}^*_{C^*(M^{4})}(C^*(M), C^*(M^2))_{(\Delta^2\circ\Delta)^*, {\Delta^2}^*} 
                             \ar[u]_{\text{\em Tor}_{(1\times \Delta \times 1)^*}(1, {\Delta}^*)}^{\cong} 
                                          \ar[d]^{\text{\em Tor}_{1}(\Delta^!, 1)} \\
\left(\text{\em Tor}^*_{C^*(M^{2})}(C^*(M), C^*(M))_{\Delta^*, \Delta^*}^{\otimes 2}\right)^{*+m}\ar[r]^-{\cong}_{\widetilde{\top}} & 
\text{\em Tor}^{*+m}_{C^*(M^{4})}(C^*(M^{2}), C^*(M^2))_{{\Delta^2}^*, {\Delta^2}^*}. 
}
}
$$
See Remark~\ref{definition generalized T-product} below for the definition of $\widetilde{\top}$.
Then the composite $EM' :$
$$
\xymatrix@C15pt@R15pt{ 
H^*(LM)\ar[r]^-{EM^{-1}}_-\cong
& \text{\em Tor}_{C^*(M^{2})}^*(C^*(M), C^*(M^I))_{\Delta^*,p^*} \ar[r]^{\text{\em Tor}_{1}(1,\sigma^*)}_\cong
&\text{\em Tor}_{C^*(M^{2})}^*(C^*(M), C^*(M))_{\Delta^*,\Delta^*}
}$$
is an isomorphism which respects the dual to the loop product $Dlp$
and the comultiplication $\widetilde{(Dlp)}$ defined here.
\end{thm}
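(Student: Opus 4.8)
The plan is to prove that $EM'$ is an isomorphism respecting the two (co)products by exhibiting $\widetilde{(Dlp)}$ as the Eilenberg--Moore translate, arrow by arrow, of the geometric composite $q^!\circ (Comp)^*$ whose induced map on cohomology is $Dlp$. The first claim, that $EM'$ is an isomorphism, is immediate: the Eilenberg--Moore map $EM$ is an isomorphism of graded algebras as recalled above, and $\text{Tor}_1(1,\sigma^*)$ is an isomorphism because $\sigma:M\hookrightarrow M^I$ is a homotopy equivalence, so $\sigma^*$ is a quasi-isomorphism of $C^*(M^2)$-modules. The substance of the theorem is therefore the compatibility with the (co)products, and for this the main tool is the naturality of $EM$ with respect to morphisms of pull-back squares.

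First I would realize the three spaces appearing in diagram~\ref{add-0} as homotopy pull-backs over successive cartesian powers of $M$, so that their cohomologies are computed by the torsion products occurring in the definition of $\widetilde{(Dlp)}$. Writing $S^1=\Sigma S^0=\mathrm{hocolim}(\ast\leftarrow S^0\to \ast)$ and applying $\mathrm{map}(-,M)$ gives $LM\simeq M\times^{h}_{M^2}M$ along the two copies of $\Delta$, whence $H^*(LM)\cong \text{Tor}^*_{C^*(M^2)}(C^*(M),C^*(M))_{\Delta^*,\Delta^*}$; similarly $LM\times LM\simeq M^2\times^h_{M^4}M^2$ along $\Delta^2$, giving the bottom right-hand corner. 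For the figure eight $LM\times_M LM=\mathrm{map}(S^1\vee S^1,M)$ I would use the homotopy equivalence $S^1\vee S^1\simeq\Theta$ with the theta-graph (two vertices, three edges) to obtain $LM\times_M LM\simeq M\times^h_{M^3}M$ along the triple diagonal $(1\times\Delta)\circ\Delta$, which yields the upper right-hand torsion product. With these identifications, the projection $p_{13}:M^3\to M^2$ satisfies $p_{13}\circ((1\times\Delta)\circ\Delta)=\Delta$, so it defines a morphism of homotopy pull-back diagrams whose induced map of homotopy limits is $Comp$; by naturality of $EM$ the induced map on cohomology is carried to $\text{Tor}_{p_{13}^*}(1,1)$. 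In the same way the pull-back square built from $1\times\Delta\times 1:M^3\to M^4$ and $\Delta^2:M^2\to M^4$, whose pull-back is the deepest diagonal $M$, identifies the two middle torsion products via the isomorphism $\text{Tor}_{(1\times\Delta\times1)^*}(1,\Delta^*)$, while comparing the path-space model $M^I$ with $M$ throughout is exactly the role of $\text{Tor}_1(1,\sigma^*)$. Thus the upper part of the diagram defining $\widetilde{(Dlp)}$ is the $EM$-image of $(Comp)^*$.

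The hard part will be matching the shriek map $q^!$ with $\text{Tor}_1(\Delta^!,1)$, together with the degree shift by $m$ and the signs. Here I would invoke Theorem~\ref{thm:ext} and the remark following it. By Theorem~\ref{thm:ext} with $n=2$ and $X=M$ one has $\text{Ext}^{m}_{C^*(M^2)}(C^*(M),C^*(M^2))\cong H^0(M)=\K$, so the class $\Delta^!$ of the diagonal $\Delta:M\to M^2$ is determined up to a scalar; and since $q$ is the pull-back of $\Delta$ along $(ev_0,ev_1):LM\times LM\to M^2$, the cited remark shows that $q^!$ is the \emph{unique} $C^*(LM\times LM)$-linear extension of $\Delta^!$ in $\D(\text{Mod-}C^*(M^2))$. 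On the other hand $\Delta^!$ is also $C^*(M^4)$-linear for the module structures appearing in $\widetilde{(Dlp)}$, because the $C^*(M^4)$-actions on $C^*(M)$ and $C^*(M^2)$ factor through $\Delta^{2*}:C^*(M^4)\to C^*(M^2)$; applying the functor $(-)\otimes^{\mathbb L}_{C^*(M^4)}C^*(M^2)$, which computes these torsion products, therefore carries $\Delta^!$ to $\text{Tor}_1(\Delta^!,1)$. That $q^!$ and $\text{Tor}_1(\Delta^!,1)$ induce the same map under $EM$ then follows from naturality of $EM$ applied to the pull-back square defining $q$, combined with the uniqueness just quoted. The degree shift $+m$ is exactly the internal degree of $\Delta^!$, while the isomorphism $\widetilde{\top}$ of Remark~\ref{definition generalized T-product}, identifying $\bigl(\text{Tor}^*_{C^*(M^2)}(C^*(M),C^*(M))^{\otimes 2}\bigr)^{*+m}$ with $\text{Tor}^{*+m}_{C^*(M^4)}(C^*(M^2),C^*(M^2))$, is the $EM$-counterpart of the K\"unneth isomorphism $H^*(LM)\otimes H^*(LM)\cong H^*(LM\times LM)$.

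Finally I would assemble the pieces: concatenating the naturality squares for $Comp$ with the extension square for $q^!$ produces a commutative diagram whose outer rectangle is precisely the compatibility of $EM'$ with $Dlp$ and $\widetilde{(Dlp)}$. The remaining work is bookkeeping of signs in the derived category and in the K\"unneth and $\widetilde{\top}$ identifications, for which I would rely on the sign conventions for shriek maps established in the Appendices.
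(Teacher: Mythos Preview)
Your overall strategy coincides with the paper's: decompose $Dlp$ as $H(q^!)\circ (Comp)^*$ and match each factor, via Eilenberg--Moore naturality, with a piece of $\widetilde{(Dlp)}$. The paper, however, carries out the argument with the genuine path-space fibration $p:M^I\twoheadrightarrow M^2$ and its strict pullbacks (cubes (3.1) and (3.2)), so that $EM$ applies in its standard form; only at the very end is $C^*(M^I)$ replaced by $C^*(M)$ via $\sigma^*$. Your homotopy-pullback models $LM\simeq M\times^h_{M^2}M$, $LM\times_M LM\simeq M\times^h_{M^3}M$, etc.\ are correct up to equivalence, but the naturality of $EM$ you repeatedly invoke is stated and proved for strict pullbacks along fibrations; the paper's route sidesteps the need to justify a homotopy-invariant version.

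There is a real gap in your treatment of $q^!$. You assert that the identification of $q^!$ with $\text{Tor}_1(\Delta^!,1)$ ``follows from naturality of $EM$ applied to the pull-back square defining $q$, combined with the uniqueness just quoted.'' But naturality of $EM$ is a statement about maps induced by morphisms of spaces, and $q^!$ is not such a map; naturality alone says nothing about where $q^!$ lands under $EM$. Nor does uniqueness of $q^!$ in an Ext group help until you have verified that the candidate $\text{Tor}_1(\Delta^!,1)$, transported through $EM$, satisfies the defining extension property $q^!\circ C^*(q')=C^*(p')\circ\Delta^!$, and that verification is precisely the missing work. The paper fills this gap by quoting the explicit chain-level description from \cite[Proof of Theorem~2, or Remark p.~429]{F-T}: for a right $C^*(M^2)$-semifree resolution $\mathbb{B}\stackrel{\simeq}{\to}C^*(M)$, the square
\[
\xymatrix@C35pt@R15pt{
C^*(LM\times_M LM)\ar[r]^-{q^!} & C^*(LM\times LM)\\
\mathbb{B}\otimes_{C^*(M^2)}C^*(LM\times LM)\ar[u]^{\simeq}\ar[r]_-{\Delta^!\otimes 1} & C^*(M^2)\otimes_{C^*(M^2)}C^*(LM\times LM)\ar@{=}[u]
}
\]
commutes in $\D(\text{Mod-}C^*(LM\times LM))$. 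Passing to homology gives $H(q^!)=\text{Tor}_1(\Delta^!,1)$ over $C^*(M^2)$, and a change of rings along $(\Delta\times\Delta)^*:C^*(M^4)\to C^*(M^2)$ (together with the left and back faces of cube (3.1)) transports this to the $C^*(M^4)$-torsion products appearing in $\widetilde{(Dlp)}$. Your proposal needs either this input or an equivalent explicit computation.
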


\begin{rem}\label{definition generalized T-product}
The isomorphism $\widetilde{\top}$ in
Theorem~\ref{thm:torsion_product}
is the canonical map defined by~\cite[p. 26]{G-M}  or
by~\cite[p. 255]{Mccleary} as the composite
$$
{\footnotesize 
\xymatrix@C60pt@R20pt{ 
 \text{Tor}^*_{C^*(M^{2})}(C^*(M), C^*(M))^{\otimes 2}
\ar[r]^-\top
& \text{Tor}^*_{C^*(M^{2})^{\otimes 2}}(C^*(M)^{\otimes 2},
C^*(M)^{\otimes 2})\ar[d]^{\text{Tor}_\gamma(\gamma,\gamma)}\\
\text{Tor}^*_{C^*(M^{4})}(C^*(M^{2}), C^*(M^2))
\ar[r]_-{\text{Tor}_{EZ^\vee}(EZ^\vee,EZ^\vee)}^-\cong
&\text{Tor}^*_{(C_*(M^{2})^{\otimes 2})^\vee}((C_*(M)^{\otimes 2})^\vee,
(C_*(M)^{\otimes 2})^\vee)
}
}
$$
where $\top$ is the $\top$-product of
Cartan-Eilenberg~\cite[XI. Proposition 1.2.1]{CartanEilenberg}
or~\cite[VIII.Theorem 2.1]{MacLanehomology}, $EZ:C_*(M)^{\otimes
  2}\buildrel{\simeq}\over\rightarrow C_*(M^2)$ denotes the Eilenberg-Zilber
quasi-isomorphism
 and $\gamma:\text{Hom}(C_*(M), \K)^{\otimes 2}\rightarrow
 \text{Hom}(C_*(M)^{\otimes 2}, \K)$ is the canonical map.
\end{rem}
It is worth mentioning that this theorem gives an intriguing decomposition of 
the cup product on the Hochschild cohomology of 
a commutative algebra; see Lemma \ref{lem:generalizeddecompositioncup} below.

The loop coproduct on a Gorenstein space is also interpreted in terms of torsion products. 
In order to recall the loop coproduct, we consider the commutative diagram 
$$
\xymatrix@C25pt@R20pt{ 
LM \times LM  & LM\times_M LM \ar[l]_(0.5){q} \ar[d] \ar[r]^(0.6){Comp} & LM \ar[d]^l \\
& M \ar[r]_{\Delta} & M\times M, } 
\eqnlabel{add-2}
$$ 
where $l : LM \to M\times M$ is a map defined by $l(\gamma)= (\gamma(0), \gamma(\frac{1}{2}))$. 
By definition,  the composite 
$$
Comp^!\circ q^* : C^*(LM\times LM ) \to C^*(LM\times_M LM) 
\to C^*(LM)
$$ 
induces the dual to the loop coproduct $Dlcop$ on $H^*(LM)$. 

Note that we apply Theorem \ref{thm:main_F-T} to (2.2) in defining the loop coproduct. 
On the other hand, applying Theorem \ref{thm:main_F-T} to the diagram (2.1), the loop product is defined.

\begin{thm} \label{thm:torsion_coproduct}
 Let $M$ be a simply-connected Gorenstein space of dimension $m$.
Consider the multiplication defined by the composite 
$$
{\footnotesize
\xymatrix@C20pt@R15pt{ 
\left(\text{\em Tor}^{*}_{C^*(M^{2})}(C^*(M), C^*(M))_{\Delta^*, \Delta^*}\right)^{\otimes 2} \ar@{.>}[dd] \ar[r]^\cong_{\widetilde{\top}}
&\text{\em Tor}^*_{C^*(M^4)}(C^*(M^2), C^*(M^2))_{{\Delta^2}^* \!, {\Delta^2}^*}\ar[d]^{\text{\em Tor}_{1}(\Delta^*, 1)} \\
& \text{\em Tor}^*_{C^*(M^{4})}(C^*(M), C^*(M^2))_{(\Delta^2\circ\Delta)^*, {\Delta^2}^*}\ar[d]^{\text{\em Tor}_{1}(\Delta^!, 1)}  \\
\text{\em Tor}^{*+m}_{C^*(M^{2})}(C^*(M), C^*(M))_{\Delta^*, \Delta^*}
&\text{\em Tor}^{*+m}_{C^*(M^{4})}(C^*(M^2), C^*(M^2))_{{\gamma'}^*, {\Delta^2}^*}\ar[l]^{\text{\em Tor}_{\alpha^*}(\Delta^*, \Delta^*)}_\cong
}
}
$$
where the maps $\alpha : M^{2} \to M^{4}$ and 
$\gamma' : M^{2} \to M^{4}$ are defined by $\alpha(x, y) = (x, y, y, y)$ and 
$\gamma'(x, y) =(x, y, y, x)$. See remark~\ref{definition generalized T-product} above for the definition of $\widetilde{\top}$.
Then the composite $EM' :$
$$
\xymatrix@C15pt@R20pt{ 
H^*(LM)\ar[r]^-{EM^{-1}}_-\cong
& \text{\em Tor}_{C^*(M^{2})}^*(C^*(M), C^*(M^I))_{\Delta^*,p^*} \ar[r]^{\text{\em Tor}_{1}(1,\sigma^*)}_\cong
&\text{\em Tor}_{C^*(M^{2})}^*(C^*(M), C^*(M))_{\Delta^*,\Delta^*}
}
$$
is an isomorphism respects the dual to the loop coproduct $Dlcop$ and the multiplication defined here.
\end{thm}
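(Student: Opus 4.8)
The plan is to follow the template that will establish Theorem~\ref{thm:torsion_product}, transporting it through the geometry of the diagram defining the loop coproduct recalled above. The central observation is that the evaluation $l(\gamma)=(\gamma(0),\gamma(\frac{1}{2}))$ cuts each free loop into two paths $\delta_1,\delta_2$, where $\delta_1$ runs from $\gamma(0)$ to $\gamma(\frac{1}{2})$ and $\delta_2$ from $\gamma(\frac{1}{2})$ back to $\gamma(1)=\gamma(0)$; recording the four endpoints produces exactly the map $\gamma'(x,y)=(x,y,y,x)$, while the subsequent gluing of the shared endpoints is encoded by $\alpha(x,y)=(x,y,y,y)$ together with $\Delta^*$. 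First I would replace each space occurring in the loop-coproduct diagram by an Eilenberg--Moore model built from this decomposition, so that $H^*(LM\times LM)$, $H^*(LM\times_M LM)$ and $H^*(LM)$ become identified, via $EM$ and its multiplicativity, with the four torsion products appearing along the defining composite. The product $\widetilde{\top}$ of Remark~\ref{definition generalized T-product} is precisely the device identifying $H^*(LM)^{\otimes 2}$ with $H^*(LM\times LM)$ at the level of torsion functors over $C^*(M^4)$.

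Next I would translate the two geometric maps $q^*$ and $Comp^!$ into torsion functoriality. The inclusion $q:LM\times_M LM\hookrightarrow LM\times LM$ of composable loops is the top arrow of a pull-back of $\Delta$, so $q^*$ is realized by the change-of-ring map $\text{Tor}_1(\Delta^*,1)$; this part is routine once the models are in place, since $q^*$ is induced by an honest map of pull-back squares and is therefore compatible with $EM$ by naturality. The shriek map $Comp^!$ is the essential point: by the remark following Theorem~\ref{thm:ext}, $Comp^!$ is the map in the derived category of $C^*(M^4)$-modules obtained by extending the algebraic shriek $\Delta^!$ along the relevant pull-back of $\Delta:M\to M\times M$. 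Hence, on the chosen semi-free models, $Comp^!$ is represented by $\text{Tor}_1(\Delta^!,1)$, which accounts for the degree shift $+m$. The final base-change isomorphism $\text{Tor}_{\alpha^*}(\Delta^*,\Delta^*)$ then re-expresses the ``cut at $0$ and $\frac{1}{2}$'' model of $LM$, whose endpoint data lives over the image of $\gamma'$, in the standard model $\text{Tor}_{C^*(M^2)}(C^*(M),C^*(M))_{\Delta^*,\Delta^*}$.

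With these identifications, I would assemble a commutative ladder whose left column is the cohomology composite $H^*(Comp^!)\circ H^*(q^*)$ inducing $Dlcop$, and whose right column is the algebraic composite defining the multiplication, the rungs being the various instances of $EM$ and $EM'$. Commutativity of each rung follows from naturality of the Eilenberg--Moore map together with the compatibility, furnished by Theorem~\ref{thm:main_F-T} and the Ext-computation of Theorem~\ref{thm:ext}, between the geometric and algebraic shriek maps; uniqueness of $\Delta^!$ (equivalently of $Comp^!$) guarantees that the two descriptions of the shriek agree, and not merely up to the ambiguity of a chosen resolution. Passing to cohomology then yields that $EM'$ carries $Dlcop$ to the displayed multiplication.

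The main obstacle I anticipate is precisely the simultaneous compatibility of these several Eilenberg--Moore models across the whole diagram: one must arrange the semi-free resolutions for $LM$, for $LM\times LM$ (through $\widetilde{\top}$ and the Eilenberg--Zilber map $EZ$), and for the figure-eight space $LM\times_M LM$ so that $q^*$, $Comp^!$ and the base changes along $\alpha^*$, $\gamma'^*$ and $\Delta^*$ all act at once by the stated $\text{Tor}$-maps. Within this, verifying that the sub-diagram of Remark~\ref{definition generalized T-product} involving $\top$, $\gamma$ and $EZ^\vee$ commutes with the rest, and tracking the signs produced by the Koszul convention and by the degree-$m$ shift of $\Delta^!$ (the point emphasized for the appendices), is the technical heart of the argument.
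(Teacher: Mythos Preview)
Your proposal is correct and follows essentially the same route as the paper: build a ladder whose left column is $q^*$ followed by $Comp^!$ and whose right column is the displayed Tor-composite, with Eilenberg--Moore maps as rungs; use naturality of $EM$ on morphisms of pull-backs for $q^*$ and for the final base change, and use the Felix--Thomas extension of $\Delta^!$ for $Comp^!$.

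The one place where the paper is more explicit than you are is the last isomorphism $\text{Tor}_{\alpha^*}(\Delta^*,\Delta^*)$. Your phrase ``re-expresses the `cut at $0$ and $\tfrac12$' model of $LM$'' is the right intuition, but to make it a proof you must actually exhibit $LM$ as the pull-back of $(p\times p):M^I\times M^I\to M^4$ along $\gamma'$. The paper does this by introducing the self-map $\varphi:LM\to LM$, $\varphi(\gamma)(t)=\gamma(2t)$ for $t\le\tfrac12$ and $\varphi(\gamma)(t)=\gamma(1)$ for $t\ge\tfrac12$, which is homotopic to the identity, together with $\beta:M^I\to M^I\times M^I$, $\beta(r)=(r,c_{r(1)})$. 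These fit into a cube whose front face realizes $(LM,l)$ as the desired pull-back over $\gamma'$ and whose back-to-front morphism is $(\varphi,\beta,\Delta,\alpha)$; naturality of $EM$ on this cube is what produces the bottom rung of the ladder. Once you supply this concrete device, your outline becomes the paper's proof verbatim.
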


\begin{rem}\label{rem:relative_cases}
A relative version of the loop product is also in our interest. Let $f : N \to M$ be a map. 
Then by definition, the relative loop space $L_fM$ fits into the pull-back diagram 
$$
\xymatrix@C30pt@R20pt{ 
L_fM \ar[r]^{} \ar[d] & M^I \ar[d]^{(ev_0,  ev_1)} \\
N \ar[r]_(0.4){(f, f)} & M\times M , 
}
$$
where $ev_t$ denotes the evaluation map at $t$. We may write $L_NM$ for the relative loop space $L_fM$ 
in case there is no danger of confusion. 
Suppose further that  $M$ is simply-connected and has a base point.
Let $N$ be a simply-connected Gorenstein space. Then the diagram
$$
\xymatrix@C25pt@R20pt{ 
L_NM & L_NM\times_N L_NM \ar[l]_(0.6){Comp} \ar[d] \ar[r]^q & L_NM \times L_NM \ar[d]^{(ev_0,ev_1)} \\
& N \ar[r]_{\Delta} & N\times N} 
$$ 
gives rise to the composite 
$$
q^!\circ (Comp)^* : C^*(L_NM) \to C^*(L_NM\times_N L_NM) \to C^*(L_NM \times L_NM)
$$ 
which, by definition, induces the dual to 
the relative loop product $Drlp$ on the cohomology $H^*(L_NM)$ with degree $\dim N$; see \cite{F-T-VP,G-S} 
for case that $N$ is a smooth manifold.
Since the diagram above corresponds to the diagram (2.1), 
the proof of Theorem \ref{thm:torsion_product} permits one to conclude that $Drlp$ has also the same description as in 
Theorem \ref{thm:torsion_product}, where $C^*(N)$ is put instead of $C^*(M)$ in the left-hand variables of the 
torsion functors in the theorem. 

As for the loop coproduct, we cannot define its relative version in natural way because of the evaluation map $l$  of loops at $\frac{1}{2}$; see 
the diagram (2.2). Indeed the point $\gamma(\frac{1}{2})$ for a loop $\gamma$ in $L_NM$ is not necessarily in $N$. 
\end{rem}

The associativity of $Dlp$ and $Dlcop$ on a Gorenstein space is an important issue. 
We describe here an algebra structure on the shifted homology 
$H_{-*+d}(L_NM)=(H^*(L_NM)^\vee)^{*-d}$ 
of a simply-connected Poincar\'e duality space $N$ of dimension $d$ with a map $f : N \to M$ to a simply-connected space. 

We define a map $m : H_*(L_NM)\otimes H_*(L_NM) \to H_*(L_NM)$ of degree $d$ by  
$$
m(a\otimes b) = (-1)^{d(|a|+d)}((Drlp)^\vee)(a\otimes b)
$$
for $a$ and $b \in H_*(L_NM)$;
see \cite[sign of Proposition 4]{C-J-Y} or~\cite[Definition
3.2]{Tamanoi:cap products}. 
Moreover, put ${\mathbb H}_{*}(L_NM)= H_{*+d}(L_NM)$. Then we establish the following proposition. 

\begin{prop}\label{prop:loop_homology} 
Let $N$ be a simply-connected Poincar\'e duality space. Then 
the shifted homology ${\mathbb H}_{*}(L_NM)$ 
is an associative algebra with respect to the product $m$. Moreover, if $M=N$, then the shifted homology 
${\mathbb H}_{*}(LM)$ is graded commutative. 
\end{prop}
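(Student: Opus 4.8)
The plan is to deduce associativity of $m$ from coassociativity of its linear dual $Drlp$, and graded commutativity (when $M=N$) from a symmetry-plus-rotation argument, the prescribed sign $(-1)^{d(|a|+d)}$ being exactly what converts the Koszul sign into genuine graded commutativity. Throughout write $L=L_NM$ and $d=\dim N$.

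\emph{Reduction.} Under the Künneth isomorphism $H^*(L\times L)\cong H^*(L)^{\otimes 2}$, the map $m$ is, up to the prescribed sign, the $\K$-linear dual of $Drlp=q^!\circ Comp^*\colon H^*(L)\to H^*(L)^{\otimes 2}$ (degree $d$), as recalled in Remark~\ref{rem:relative_cases}. Hence associativity of $m$ is equivalent to coassociativity of $Drlp$, i.e. to the equality of the two composites
\[
(Drlp\otimes 1)\circ Drlp,\qquad (1\otimes Drlp)\circ Drlp\colon H^*(L)\longrightarrow H^*(L)^{\otimes 3},
\]
once the degree shift by $d$ and the Koszul signs of the interchange maps are bookkept. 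So the whole question is reduced to this coassociativity, which I would establish geometrically.

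\emph{Coassociativity.} I would realize both composites through the triple fibre product $L\times_N L\times_N L$, carrying its concatenation $Comp_3\colon L\times_N L\times_N L\to L$ and its inclusion $Q\colon L\times_N L\times_N L\hookrightarrow L^{3}$, where $Q$ is the pullback of the thin diagonal $\delta\colon N\to N^{3}$ along $(ev_0,ev_0,ev_0)\colon L^{3}\to N^{3}$. The central step is to prove
\[
(Drlp\otimes 1)\circ Drlp \;=\; Q^!\circ Comp_3^{\,*}\;=\;(1\otimes Drlp)\circ Drlp .
\]
Each equality is a diagram chase in $\D(\text{Mod-}C^*(N^{3}))$ resting on two structural properties of the shriek maps of Theorem~\ref{thm:main_F-T} and its Gorenstein extension Theorem~\ref{thm:ext} (applied with $n=3$ to produce $\delta^!$): first, \emph{base change}, namely that the shriek attached to a pullback square stays a shriek after a further pullback, so that a shriek applied on one tensor factor composes with $q^!$ into the shriek of the composite pullback square; and second, \emph{uniqueness}, the shriek $Q^!$ being the unique element of $\text{Ext}^{2d}_{C^*(L^{3})}\!\big(C^*(L\times_N L\times_N L),C^*(L^{3})\big)$ filling its defining square. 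These are precisely the extended shriek-map properties established in the Appendices. Granting them, each iterated composite would be forced to equal $Q^!\circ Comp_3^{\,*}$, yielding coassociativity and hence associativity of $m$.

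\emph{Commutativity when $M=N$.} Here $L=LM$. I would exploit the symmetry of the diagonal, $T\circ\Delta=\Delta$ for the interchange $T\colon M\times M\to M\times M$, together with the loop-rotation homotopy: concatenating $\beta$ after $\alpha$ becomes, after the half-turn rotation $r\colon LM\to LM$ (homotopic to the identity, being part of the $S^1$-action), the concatenation of $\alpha$ after $\beta$. Thus $Comp\circ\tau\simeq r\circ Comp$ on $LM\times_M LM$ for the factor-swap $\tau$, giving $\tau^*\circ Comp^*=Comp^*\circ r^*=Comp^*$. Since $q$ is the pullback of the symmetric map $\Delta$, the uniqueness clause of Theorem~\ref{thm:main_F-T} makes $q^!$ equivariant for the swap on $LM\times LM$; combining this with the graded commutativity of the cup product I would obtain $T^*\circ Drlp=Drlp$, the topological swap $T^*$ carrying the Koszul sign through the signed Künneth interchange. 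Dualizing and inserting the sign $(-1)^{d(|a|+d)}$ then gives $m(a\otimes b)=(-1)^{|a||b|}m(b\otimes a)$ on $\mathbb{H}_*(LM)$.

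\emph{Main obstacle.} The conceptual skeleton—reduce to a single shriek by base change and uniqueness, and to loop rotation for commutativity—is robust, so I expect the genuine difficulty to be the consistent bookkeeping of signs. The shift by $d$, the Koszul signs of the Eilenberg--Zilber and Künneth maps, and the signs of the symmetry isomorphism of the torsion product (cf. $\widetilde{\top}$ in Remark~\ref{definition generalized T-product}) must all assemble into the single sign $(-1)^{d(|a|+d)}$ and into the Koszul sign $(-1)^{|a||b|}$. This is exactly why a sign-sensitive extension of the Félix--Thomas shriek calculus is needed, and it is the part I anticipate to be the most delicate; an alternative route, should the signs prove unwieldy, would be to transport the question across the torsion-functor descriptions of Theorems~\ref{thm:torsion_product} and~\ref{thm:torsion_coproduct} and appeal to associativity and commutativity of the underlying cup product.
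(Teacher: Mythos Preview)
Your outline is essentially the paper's own strategy: factor both iterates through the triple fibre product using naturality (base change) and uniqueness of shriek maps, and handle commutativity via the swap $T$ together with the loop-reparametrization homotopy $Comp\circ T\simeq Comp$. The paper's proof in Section~\ref{associativity loop product} follows exactly this route, invoking Theorems~\ref{thm:newThm3}, \ref{naturalityshriek} and \ref{produitshriek} from the Appendix.

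One correction, though, and it is precisely the sign issue you flag as the main obstacle. Your claims ``each iterated composite equals $Q^!\circ Comp_3^{\,*}$'' and ``$T^*\circ Drlp=Drlp$'' are not literally true: the paper proves instead
\[
(Dlp\otimes 1)\circ Dlp=(-1)^d(1\otimes Dlp)\circ Dlp
\quad\text{and}\quad
Dlp=(-1)^d\,T^*\circ Dlp.
\]
These $(-1)^d$'s are not Koszul artifacts that disappear under careful bookkeeping; they arise from the product-of-shrieks formula (Theorem~\ref{produitshriek}(2)), where $H((g\times g')^!)(a\times b)=(-1)^{(m'-n')(n+|a|)}H(g^!)(a)\times H(g'^!)(b)$, and from evaluating on orientation classes (Remark~\ref{remafternewthm3}(a)). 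In other words, uniqueness of shriek maps only pins things down up to a nonzero scalar, and computing that scalar requires the Poincar\'e duality hypothesis---this is why associativity and commutativity are stated only for Poincar\'e duality $N$, not for general Gorenstein spaces. The sign $(-1)^{d(|a|+d)}$ in the definition of $m$ then absorbs exactly these $(-1)^d$'s via Example~\ref{lem:important_degree}(i) and Lemma~\ref{lem:dualsign}.
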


As mentioned below, the loop product on $L_NM$ is not commutative in general. 

We call a bigraded vector space $V$ a {\it bimagma} with shifted degree $(i, j)$ if $V$ 
is endowed with a multiplication 
$V\otimes V \to V$ and a comultiplication $V \to V\otimes V$ of degree $(i, j)$. 

Let $K$ and $L$ be objects in $\text{Mod-}A$ and $A\text{-Mod}$, respectively. 
Consider a torsion product of the form $\text{Tor}_{A}(K, L)$ which 
is the homology of the derived tensor product $K\otimes_A^{{\mathbb L}}L$.  
The external degree of the bar resolution of the second variable $L$  
filters the torsion products. Indeed, we can regard 
the torsion product $\text{Tor}_{A}(K, L)$ as the homology 
$H(M\otimes_AB(A, A, L))$ with the bar resolution $B(A, A, L)\to L$ of $L$. 
Then the filtration 
${\mathcal F}=\{F^p\text{Tor}_{A}(K, L)\}_{p\leq 0}$ of the torsion product  is defined by 
$$
F^p\text{Tor}_{A}(K, L) = \text{Im} \{i^* : 
H(M\otimes_AB^{\leq p}(A, A, L)) \to \text{Tor}_{A}(K, L)\}. 
$$ 
Thus the filtration 
${\mathcal F}=\{F^p\text{Tor}_{C^*(M^2)}(C^*(M), C^*(M^I))\}_{p\leq 0}$ 
induces a filtration of $H^*(LM)$ via 
the Eilenberg-Moore map for a simply-connected space $M$.

By adapting differential torsion functor descriptions of the loop (co)products in 
Theorems \ref{thm:torsion_product} and \ref{thm:torsion_coproduct}, we can give the EMSS 
a bimagma structure.

\begin{thm} \label{thm:EMSS}
Let $M$ be a simply-connected Gorenstein space of dimension $d$. 
Then the Eilenberg-Moore spectral sequence 
$\{E_r^{*,*}, d_r\}$ converging to $H^*(LM; \K)$ admits loop (co)products which 
is compatible with those in the target; that is, each term $E_r^{*,*}$ is endowed with a comultiplication 
$Dlp_r : E_r^{p,q} \to \oplus_{s+s' =p, t+t'=q+d}E_r^{s, t}\otimes E_r^{s',t'}$ and a multiplication 
$Dlcop_r : E_r^{s,t}\otimes E_r^{s', t'} \to E_r^{s+s', t+t'+d}$
which are compatible with differentials in the sense that  
$$Dlp_r d_r = (-1)^d(d_r\otimes 1 + 1\otimes d_r)Dlp_r  \ \ \text{and} \ \  
Dlcop_r(d_r\otimes 1+ 1\otimes d_r)=(-1)^dd_rDlcop_r.  
$$
Here $(d_r\otimes 1 + 1\otimes d_r)(a\otimes b)$ means
$d_ra\otimes b +(-1)^{p+q}a\otimes d_rb$ if $a\in E_r^{p, q}$.
Note the unusual sign $(-1)^d$.
Moreover the $E_\infty$-term 
$E_\infty^{*,*}$ is isomorphic to 
$\text{\em Gr}H^*(LM; \K)$ as a  bimagma with shifted degree $(0, d)$.   

\end{thm}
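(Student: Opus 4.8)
The plan is to transport the torsion-functor (co)products of Theorems~\ref{thm:torsion_product} and~\ref{thm:torsion_coproduct} onto the filtered complex that presents the Eilenberg--Moore spectral sequence, and then verify that each constituent map descends to the associated spectral sequence while tracking the shift in filtration and the sign. First I would fix the bar resolution model: write $\text{Tor}_{C^*(M^2)}(C^*M,C^*M)$ as the homology of $C^*M\otimes_{C^*(M^2)}B(C^*(M^2),C^*(M^2),C^*M)$, so that the external bar degree induces exactly the filtration $\mathcal{F}=\{F^p\}_{p\le 0}$ described before the statement, and the EMSS is the spectral sequence of this filtered dga. The key observation is that \emph{every} arrow appearing in the composites defining $\widetilde{(Dlp)}$ and the dual loop coproduct is of the form $\text{Tor}_\varphi(\psi,\chi)$ for DGA/module maps, together with the canonical isomorphism $\widetilde{\top}$ of Remark~\ref{definition generalized T-product}; such maps are induced by maps of the underlying bar complexes, hence are automatically filtration-preserving (or filtration-shifting by a controlled amount). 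Thus the whole composite is realized by an honest map of filtered complexes, and a map of filtered complexes induces a morphism on each page $E_r$ commuting with $d_r$.

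Second, I would identify the filtration behaviour precisely. The shriek map $\Delta^!$, being an element of $\text{Ext}^{m}$, raises cohomological degree by $m$ but—this is the crucial point—its effect on the bar filtration is what produces the degree shift $d$ in the statement; I would compute the bidegree of $\Delta^!$ under the chosen resolution and check it contributes $(0,d)$ to the total, which is why $Dlp_r$ lands in $\oplus_{s+s'=p,\,t+t'=q+d}$ and $Dlcop_r$ lands in filtration $E_r^{s+s',t+t'+d}$. With $\widetilde{\top}$ being an isomorphism of filtered objects (it is a composite of the Cartan--Eilenberg $\top$-product, which respects the tensor-product filtration on $E_0$, and $\text{Tor}$ applied to quasi-isomorphisms $EZ^\vee$ and to $\gamma$), the induced maps on pages are the required $Dlp_r$ and $Dlcop_r$. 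The compatibility relations $Dlp_r\,d_r=(-1)^d(d_r\otimes 1+1\otimes d_r)Dlp_r$ and $Dlcop_r(d_r\otimes 1+1\otimes d_r)=(-1)^d d_r\,Dlcop_r$ should then follow from the Leibniz rule for the differential on the filtered (co)chain level, once the sign conventions are installed; the anomalous global sign $(-1)^d$ arises from commuting the degree-$d$ class $\Delta^!$ past the differential, and I would pin it down using the sign bookkeeping developed in the Appendices.

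Third, for the $E_\infty$ statement I would use that the spectral sequence of a filtered complex converges and that a filtered map induces on $E_\infty$ the associated graded of the map it induces on homology. Applying this to the composite $EM'$ of Theorems~\ref{thm:torsion_product} and~\ref{thm:torsion_coproduct}, which is an isomorphism respecting $Dlp$ and $Dlcop$, gives that $E_\infty^{*,*}\cong \text{Gr}\,H^*(LM)$ carries precisely $\text{Gr}(Dlp)$ and $\text{Gr}(Dlcop)$, i.e.\ the bimagma structure with shifted degree $(0,d)$. I would need convergence of the EMSS, which holds because $M$ is simply-connected so the bar filtration is suitably bounded and complete.

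The hard part will be the second step: making the filtration degrees and especially the signs exact. It is easy to see each arrow is filtered, but $\Delta^!$ is only characterized as a class in an $\text{Ext}$-group (a homotopy class of maps of semifree modules), so to read off its precise $(s,t)$-bidegree and its interaction with $d_r$ on \emph{each} page—rather than merely up to the already-known homology-level degree $m$—I must choose an explicit cocycle representative in the bar model and verify the Leibniz-type identity with the correct $(-1)^d$ at the chain level. Coordinating the signs across the tensor factors in $\widetilde{\top}$, the Eilenberg--Zilber map, and $\Delta^!$ simultaneously is where the bulk of the genuine work lies, and is exactly what the sign-careful extensions of the shriek-map formalism in the Appendices are designed to support.
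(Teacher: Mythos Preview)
Your overall strategy matches the paper's, but there is a genuine gap. In the composite $\widetilde{(Dlp)}$ of Theorem~\ref{thm:torsion_product}, one of the arrows---namely $\text{Tor}_{(1\times\Delta\times 1)^*}(1,\Delta^*)$---points the \emph{wrong way}: it is an isomorphism on torsion groups, but to build the loop-product comultiplication you must compose with its \emph{inverse}. Your claim that ``the whole composite is realized by an honest map of filtered complexes'' is therefore not justified: each individual arrow is a filtered map, but the inverse of a filtered quasi-isomorphism need not be filtration-preserving, and in particular need not induce a map on each $E_r$. What you actually need is that $\text{Tor}_{(1\times\Delta\times 1)^*}(1,\Delta^*)$ induces an \emph{isomorphism of spectral sequences}, not merely of their abutments.

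The paper isolates exactly this point as Lemma~\ref{lem:key}: one checks that on the $E_2$-page this map becomes $\text{Tor}_{H^*(1\times\Delta\times 1)}(1,H^*(\Delta))$, which is an isomorphism by an elementary bar-resolution argument over the commutative algebra $H^*(M)$ (recorded as part~(1) of Lemma~\ref{lem:generalizeddecompositioncup}). Once it is an isomorphism on $E_2$, it is an isomorphism on every subsequent page, and the inverse exists on each $E_r$. With this lemma in hand, the rest of your outline---filtration preservation of the remaining maps, the Gugenheim--May result for $\widetilde{\top}$, and the sign computation $(\Delta^!\otimes 1)d=(-1)^d d(\Delta^!\otimes 1)$ coming from $\Delta^!\in\text{Ext}^d$---is essentially what the paper does, and your treatment of the sign is in fact more explicit than your proposal suggests it needs to be (no appendix machinery is required; it is a two-line Koszul computation).
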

If the dimension of the Gorenstein space is non-positive, unfortunately
the loop product and the loop coproduct in the EMSS are trivial
and the only information that  Theorem~\ref{thm:EMSS} gives 
is the following corollary.

\begin{cor}\label{cor:trivial coproduct in EMSS}
Let $M$ be a simply-connected Gorenstein space of dimension $d$.
Assume that $d$ is negative or that $d$ is null and $H^*(M)$ is not concentrated in degree $0$.
Consider the filtration given by the cohomological Eilenberg-Moore spectral sequence
converging to $H^*(LM; \K)$.
Then the dual to the loop product and that to the loop coproduct increase 
both the filtration degree of $H^*(LM)$ by at least one.
\end{cor}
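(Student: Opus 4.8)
The plan is to deduce the statement from the vanishing of the induced operations on the associated graded. Since the composite $EM'$ is a filtered isomorphism, it transports the filtration $\F$ on $\text{Tor}^*_{C^*(M^2)}(C^*(M),C^*(M))_{\Delta^*,\Delta^*}$ to the Eilenberg--Moore filtration of $H^*(LM)$ and carries the dual loop product and coproduct to the composites $\widetilde{(Dlp)}$ and $\widetilde{(Dlcop)}$ of Theorems~\ref{thm:torsion_product} and~\ref{thm:torsion_coproduct}. By Theorem~\ref{thm:EMSS} the associated graded of these operations is exactly the $E_\infty$-operation, of bidegree $(0,d)$. A filtered map whose associated graded vanishes strictly increases the filtration degree, i.e. raises it by at least one; so it suffices to prove that $\text{Gr}\,\widetilde{(Dlp)}$ and $\text{Gr}\,\widetilde{(Dlcop)}$ both vanish under the stated hypotheses.

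Next I would isolate where a filtration shift can occur. In the composites defining $\widetilde{(Dlp)}$ and $\widetilde{(Dlcop)}$, every arrow except $\text{Tor}_1(\Delta^!,1)$ is induced either by a morphism of DGA's (the change-of-rings maps $\text{Tor}_{p_{13}^*}(1,1)$, $\text{Tor}_{(1\times\Delta\times1)^*}(1,\Delta^*)$, $\text{Tor}_{\alpha^*}(\Delta^*,\Delta^*)$ and $\text{Tor}_1(\Delta^*,1)$) or by the canonical Eilenberg--Zilber and $\top$-product comparison $\widetilde{\top}$; all of these strictly preserve the bar-length filtration $\F$ and are isomorphisms (or the identity) on $\text{Gr}$. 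Hence $\text{Gr}\,\widetilde{(Dlp)}$ and $\text{Gr}\,\widetilde{(Dlcop)}$ factor through $\text{Gr}\,\text{Tor}_1(\Delta^!,1)$, the map induced on torsion products by the image of $\Delta^!$ in the associated graded extension functor $\text{Ext}^{*}_{H^*(M^2)}(H^*(M),H^*(M^2))$. It is therefore enough to show that this associated graded class has no component of resolution (homological) degree $0$.

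Finally I would run the degree computation. The class $\Delta^!$ lies in $\text{Ext}^{d}_{C^*(M^2)}(C^*(M),C^*(M^2))$, and the universal-coefficient spectral sequence $\text{Ext}^{s,t}_{H^*(M^2)}(H^*(M),H^*(M^2))\Rightarrow \text{Ext}^{s+t}_{C^*(M^2)}(C^*(M),C^*(M^2))$ places the resolution-degree-$0$ part of the abutment along the edge $s=0$, namely $\text{Hom}^{d}_{H^*(M^2)}(H^*(M),H^*(M^2))$. Writing $H^*(M)=H^*(M^2)/I$ with $I=\ker(\Delta^*)$, this group is the degree-$d$ part of the annihilator $(0:_{H^*(M^2)}I)$. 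If $d<0$ it vanishes because $H^*(M^2)=H^*(M)\otimes H^*(M)$ is concentrated in non-negative degrees; if $d=0$ and $H^*(M)$ is not concentrated in degree $0$, then $I\neq0$, so a degree-$0$ element of $(0:I)$ is a scalar killing $I$, hence $0$. In either case $\text{Ext}^{0,d}_{H^*(M^2)}(H^*(M),H^*(M^2))=0$, so $\Delta^!$ has resolution degree at least $1$; consequently $\text{Gr}\,\text{Tor}_1(\Delta^!,1)$, and with it $\text{Gr}\,\widetilde{(Dlp)}$ and $\text{Gr}\,\widetilde{(Dlcop)}$, vanishes.

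The hard part will be the filtration bookkeeping of the second step: one must verify that $\widetilde{\top}$ and each change-of-rings map are strictly filtered and induce the expected isomorphisms on $\text{Gr}$, so that the associated graded of the whole composite is genuinely controlled by $\Delta^!$ alone, and that the resolution-degree filtration of the extension functor matches, under $\text{Tor}_1(-,1)$, the bar-length filtration $\F$ defining the EMSS. Once this compatibility is established, the degree count above is immediate.
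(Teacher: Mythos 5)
Your proposal is correct and follows essentially the paper's own route: reduce, via the filtration-preservation established in Theorems~\ref{thm:torsion_product}, \ref{thm:torsion_coproduct} and \ref{thm:EMSS}, to the vanishing of the induced operations on the associated graded, and then show that the edge component $H^*(\Delta^!)$ of $\Delta^!$ is zero under the degree hypotheses. Your annihilator computation $\text{Hom}^d_{H^*(M^2)}(H^*(M),H^*(M^2))=\bigl(0:_{H^*(M^2)}\ker\Delta^*\bigr)^d=0$ is exactly the paper's argument that the $H^*(M^2)$-linear map $H^*(\Delta^!)$ is determined by $H^*(\Delta^!)(1)$, which must annihilate $\ker\Delta^*$ and hence vanishes (for $d<0$ by degree reasons, for $d=0$ by testing against $m\otimes 1-1\otimes m$); note only that for the factorization step you need the remaining maps merely to preserve the filtration, not to induce isomorphisms on $\mathrm{Gr}$ (e.g.\ $\text{Tor}_1(\Delta^*,1)$ does not).
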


\begin{rem}
a) Let $M$ be a simply-connected closed oriented manifold. We can choose a map 
$\Delta^! : C^*(M) \to C^*(M\times M)$ so that $H(\Delta^!)w_M =w_{M\times M}$; 
that is, $\Delta^!$ is the usual shriek map in the cochain level.   
Then the map $Dlp$ and $Dlcop$ coincide with 
the dual to the loop product and to the loop coproduct in the sense of 
Chas and Sullivan \cite{C-S}, Cohen and Godin \cite{C-G}, respectively. 
Indeed, this fact follows from the uniqueness of shriek map and the comments in three paragraphs in the end of 
\cite[p. 421]{F-T}. 
Thus the Eilenberg-Moore spectral sequence in Theorem \ref{thm:EMSS} converges to 
$H^*(LM; \K)$ as an algebra and a coalgebra. 

b) Let $M$ be the classifying space $BG$ of a connected Lie group $G$. 
Since the homotopy fibre of $\Delta : BG \to BG \times BG$ in (2.1) and (2.2) is homotopy equivalent to $G$,
 we can choose the shriek map $\Delta^!$ described in Theorems \ref{thm:torsion_coproduct} and 
\ref{thm:torsion_product} as the integration along the fibre.    
Thus $q^!$ also coincides with the integration along the fibre; see 
\cite[Theorems 6 and 13]{F-T}.  This yields that the bimagma structure in 
$\text{Gr}H^*(LBG; \K)$ is induced by the loop product and coproduct in the sense of 
Chataur and Menichi \cite{C-M}. 

c) Let $M$ be the Borel construction $EG\times_G X$ of a connected compact Lie group $G$
acting on a simply-connected closed oriented manifold $X$.
In~\cite{BGNX}, Behrend, Ginot, Noohi and Xu defined a loop product and a loop coproduct on
the homology $H_*(L\frak{X})$ of free loop of a stack $\frak{X}$. Their main example of stack is the quotient
stack $[X/G]$ associated to a connected compact Lie group $G$ acting smoothly on a closed oriented manifold $X$.
Although F\'elix and Thomas did not prove it, we believe that their loop (co)products for the Gorenstein space
$M=EG\times_G X$ coincide with the loop (co)products for the quotient stack $[X/G]$ of~\cite{BGNX}.
\end{rem}


The following theorem is the main result of this paper.

\begin{thm} 
\label{thm:loop_homology_ss} 
Let $N$ be a simply-connected Gorenstein space of dimension $d$. 
Let $f : N\rightarrow M$ be a continuous map to a simply-connected space $M$.
Then the Eilenberg-Moore spectral sequence is a right-half plane cohomological
spectral sequence $\{{\mathbb E}_r^{*,*}, d_r\}$ converging to the Chas-Sullivan loop homology 
${\mathbb H}_*(L_NM)$  as an algebra with 
$$
{\mathbb E}_2^{*,*} \cong HH^{*, *}(H^*(M); {\mathbb H}_*(N))
$$
as a bigraded algebra; that is, there exists a
decreasing filtration $\{F^p{\mathbb H}_*(L_N M)\}_{p\geq 0}$ of 
$({\mathbb H}_*(L_N M),m)$ such that 
${\mathbb E}_\infty^{*,*} \cong Gr^{*,*}{\mathbb H}_*(L_N M)$ as a bigraded algebra, where 
$$
Gr^{p,q}{\mathbb H}_*(L_N M) = F^p{\mathbb
  H}_{-(p+q)}(L_NM)/F^{p+1}{\mathbb H}_{-(p+q)}(L_N M).
$$ 
Here the product on the $\mathbb{E}_2$-term is the cup
product (See Definition~\ref{cup product Hochschild} (1)) induced by
$$
(-1)^d\overline{H(\Delta^{!})^\vee}:\mathbb{H}_*(N)\otimes_{H^{*}(M)} \mathbb{H}_*(N)\rightarrow \mathbb{H}_*(N).
$$
Suppose further that $N$ is a Poincar\'e duality space. Then the $\mathbb{E}_2$-term is isomorphic to  the Hochschild cohomology 
$HH^{*, *}(H^*(M); {H}^*(N))$ with the cup product as an algebra.  
\end{thm}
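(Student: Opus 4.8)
The plan is to construct the spectral sequence from the Eilenberg-Moore spectral sequence of the pull-back square defining $L_NM$, dualize it, and then import the multiplicative structure from Remark~\ref{rem:relative_cases} and Theorem~\ref{thm:torsion_product}. First I would recall that $L_NM$ is the pull-back of the path fibration $p=(ev_0,ev_1):M^I\to M\times M$ along $(f,f):N\to M\times M$, and that $M^I\simeq M$ through the constant paths. Hence the cohomological Eilenberg-Moore spectral sequence attached to this square is a second-quadrant spectral sequence converging to $H^*(L_NM)$ with
$$
E_2^{*,*}\cong \mathrm{Tor}^{*,*}_{H^*(M)\otimes H^*(M)}\bigl(H^*(N),H^*(M)\bigr)_{(f,f)^*,\Delta^*},
$$
where $H^*(N)$ is a right $H^*(M)^{\otimes 2}$-module via $f^*\otimes f^*$ and $H^*(M)$ is the diagonal (multiplication) module. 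Since all spaces are of finite type, dualizing degreewise turns this into a right-half plane spectral sequence $\{\mathbb{E}_r^{*,*},d_r\}$ converging to $\mathbb{H}_*(L_NM)=H_{*+d}(L_NM)$; the filtration sign flip produces the decreasing filtration $\{F^p\mathbb{H}_*(L_NM)\}_{p\geq 0}$ and the stated identification of $\mathbb{E}_\infty^{*,*}$ with $Gr^{*,*}\mathbb{H}_*(L_NM)$.

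Next I would identify $\mathbb{E}_2$. Writing $A=H^*(M)$, the module $H^*(M)$ appearing above is exactly the bimodule $A$ over its enveloping algebra $A\otimes A$, so $E_2^{*,*}$ is the Hochschild homology of $A$ with coefficients in the symmetric bimodule $H^*(N)$. Dualizing over a field and using finite type, the standard identity $\mathrm{Tor}_{A\otimes A}(A,P)^\vee\cong \mathrm{Ext}_{A\otimes A}(A,P^\vee)=HH^{*}(A;P^\vee)$ gives
$$
\mathbb{E}_2^{*,*}\cong HH^{*,*}\bigl(H^*(M);H^*(N)^\vee\bigr).
$$
The Gorenstein dimension $d$ of $N$ enters as the degree shift identifying $H^*(N)^\vee$ with $\mathbb{H}_*(N)=H_{*+d}(N)$ as an $H^*(M)$-module, yielding $\mathbb{E}_2^{*,*}\cong HH^{*,*}(H^*(M);\mathbb{H}_*(N))$.

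The algebra structure is imported as follows. By Remark~\ref{rem:relative_cases} the relative loop product $Drlp$ on $H^*(L_NM)$ has the torsion-functor description of Theorem~\ref{thm:torsion_product} with $C^*(N)$ in place of $C^*(M)$ in the left-hand variables; dualizing with the prescribed sign gives the product $m$ of Proposition~\ref{prop:loop_homology} on $\mathbb{H}_*(L_NM)$. Running the argument of Theorem~\ref{thm:EMSS} in this relative setting equips each page $\mathbb{E}_r$ with a multiplication compatible with $d_r$ and converging to the associated graded of $(\mathbb{H}_*(L_NM),m)$, so it remains to compute this product on $\mathbb{E}_2$. The only non-formal map in the torsion description is $\mathrm{Tor}_1(\Delta^!,1)$; passing to cohomology, the cochain shriek $\Delta^!$ induces $H(\Delta^!)$, whose dual $(-1)^d\overline{H(\Delta^!)^\vee}$ is precisely the pairing $\mathbb{H}_*(N)\otimes_{H^*(M)}\mathbb{H}_*(N)\to\mathbb{H}_*(N)$ making $\mathbb{H}_*(N)$ a commutative $H^*(M)$-algebra. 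The general principle that a coefficient pairing induces the cup product on $HH^*$ then identifies the multiplication on $\mathbb{E}_2$ with the Hochschild cup product.

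Finally, for the Poincar\'e duality refinement, cap product with the fundamental class of $N$ is an isomorphism $\mathbb{H}_*(N)=H_{*+d}(N)\xrightarrow{\cong}H^{-*}(N)$ of $H^*(M)$-modules that carries the intersection pairing $(-1)^d\overline{H(\Delta^!)^\vee}$ to the cup product of $H^*(N)$. Hence it induces an isomorphism of bigraded algebras $HH^{*,*}(H^*(M);\mathbb{H}_*(N))\cong HH^{*,*}(H^*(M);H^*(N))$, which is the last assertion. The main obstacle is the penultimate step: tracking the signs through the dualization and verifying that the coefficient map $\mathrm{Tor}_1(\Delta^!,1)$ descends on $\mathbb{E}_2$ exactly to the cup product on Hochschild cohomology, since the identification of the Eilenberg-Moore product with a coefficient-induced pairing requires careful compatibility of the Eilenberg-Zilber and bar filtrations.
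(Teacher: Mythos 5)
Your overall strategy --- dualize the relative Eilenberg--Moore spectral sequence, read off ${\mathbb E}_2$ as the dual of the Hochschild homology $\text{Tor}_{H^*(M)^{\otimes 2}}(H^*(N),H^*(M))$, and import the multiplicative structure from the torsion-functor description of the (relative) loop product --- is exactly the route the paper takes, and the convergence and filtration bookkeeping in your first two paragraphs is correct. The genuine gap is the step you yourself flag as ``the main obstacle'': identifying the product on ${\mathbb E}_2$ with the Hochschild cup product is not an instance of a ``general principle that a coefficient pairing induces the cup product on $HH^*$''. After dualizing, the ${\mathbb E}_2$-product is presented as a specific composite: the external product $\otimes$ into $\text{Ext}^*_{A^{\otimes 4}}(A^{\otimes 2},M^{\otimes 2})$, followed by $\text{Ext}^*_{1}(1,\mu_M)$, then the inverse of $\text{Ext}^*_{1\otimes\mu\otimes 1}(\mu,1)$, and finally $\text{Ext}^*_{1\otimes\eta\otimes 1}(1,1)$ (these are the duals of $\widetilde{\top}$, $\text{Tor}_{1}(\Delta^!,1)$, $\text{Tor}_{(1\times\Delta\times 1)^*}(1,\Delta^*)$ and $\text{Tor}_{p_{13}^*}(1,1)$, respectively). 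That this composite equals the cup product of Definition~\ref{cup product Hochschild}(1) is precisely the content of Lemma~\ref{lem:generalizeddecompositioncup}(4), whose proof is a chain-level argument with the resolution $B(A,A,A)\otimes_A B(A,A,A)$ and a diagonal approximation, and uses the commutativity of $H^*(M)$ in an essential way. Without this lemma (or an equivalent computation) the proof is incomplete; it is the technical heart of the theorem, not a formality.

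Two smaller points. The sign $(-1)^d$ does not arise where you place it: the paper first obtains the cup product induced by $\overline{H(\Delta^{!})^\vee}$ on $HH^*(H^*(M),H_*(N))$, and the sign appears only when regrading to ${\mathbb H}_*(N)=s^{-d}H_*(N)$ via Examples~\ref{lem:important_degree}(ii) and~\ref{examples naturality cup product}(i); your sentence that ``the Gorenstein dimension $d$ enters as the degree shift identifying $H^*(N)^\vee$ with ${\mathbb H}_*(N)$'' conflates linear duality (degree-preserving over a field) with this desuspension. Finally, in the Poincar\'e duality case the assertion that $-\cap[N]$ intertwines $(-1)^d\overline{H(\Delta^{!})^\vee}$ with the cup product is not automatic: it needs the normalization $H(\Delta^!)(\omega_N)=\omega_N\times\omega_N$ of Corollary~\ref{shriek diagonal} together with the identity $(\alpha\times\beta)\cap([N]\times[N])=(-1)^{\vert\alpha\vert\vert [N]\vert}(\alpha\cap[N])\times(\beta\cap[N])$, which you assert but do not verify.
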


Taking $N$ to be the point, we obtain the following well-known corollary.

\begin{cor} {\em (}cf. \cite[Corollary 7.19]{Mccleary}{\em )}
Let $M$ be a pointed topological space.
Then the Eilenberg-Moore spectral sequence
$E_2^{*,*}=\text{\em Ext}^{*,*}_{H^*(M)}(\K, \K)$ converging to
$H_*(\Omega M)$ is a spectral sequence of algebras with respect to the
Pontryagin product.
\end{cor}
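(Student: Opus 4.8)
The plan is to derive the corollary as the special case $N=\{*\}$ of Theorem~\ref{thm:loop_homology_ss}, with $f:\{*\}\to M$ the inclusion of the base point. First I would record that a point is a simply-connected Poincar\'e duality space of dimension $d=0$: since $C^*(\{*\})=\K$, the algebra is trivially Gorenstein of dimension $0$ with $H^*(\{*\})=\K$, and it satisfies Poincar\'e duality with fundamental class the generator of $H^0$. Hence the hypotheses of the theorem are met. Next, unwinding the pull-back of Remark~\ref{rem:relative_cases} with $N=\{*\}$ identifies the relative loop space $L_{\{*\}}M$ with the based loop space $\Omega M$, because the fibre of $(ev_0,ev_1):M^I\to M\times M$ over the point $(x_0,x_0)$ is exactly $\Omega M$. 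Since $d=0$, the shifted homology satisfies $\mathbb{H}_*(L_{\{*\}}M)=H_{*}(\Omega M)$.

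The second step is to identify the product $m$ with the Pontryagin product. When $N=\{*\}$ the fibre product $L_{\{*\}}M\times_{\{*\}}L_{\{*\}}M$ is simply $\Omega M\times\Omega M$, the inclusion $q$ is the identity, and the diagonal $\Delta:\{*\}\to\{*\}\times\{*\}$ is an isomorphism, so the shriek map $\Delta^{!}$ (and hence $q^{!}$) is the identity of degree $0$. Therefore $Drlp=q^{!}\circ Comp^{*}=Comp^{*}$, and its dual is the map $Comp_*$ induced by concatenation of based loops, namely the Pontryagin product; the sign $(-1)^{d(|a|+d)}$ in the definition of $m$ is trivial because $d=0$. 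Consequently $(\mathbb{H}_*(L_{\{*\}}M),m)$ is $H_*(\Omega M)$ with the Pontryagin product, and the theorem already provides a right-half plane spectral sequence of algebras converging to it.

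It remains to identify the $E_2$-term. Since a point is a Poincar\'e duality space, the final assertion of Theorem~\ref{thm:loop_homology_ss} gives $\mathbb{E}_2^{*,*}\cong HH^{*,*}(H^*(M);H^*(\{*\}))=HH^{*,*}(H^*(M);\K)$ with the cup product, where $\K$ carries the trivial (augmentation) bimodule structure and the structure map $(-1)^d\overline{H(\Delta^{!})^\vee}$ reduces to the multiplication of $\K$. I would then invoke the standard identification $HH^{*,*}(H^*(M);\K)\cong \text{Ext}^{*,*}_{H^*(M)}(\K,\K)$: on the normalized Hochschild complex with trivial coefficients the two outer (bimodule-action) terms of the Hochschild differential vanish, so the complex collapses to the reduced cobar complex computing $\text{Ext}_{H^*(M)}(\K,\K)$, and under this identification the cup product corresponds to the Yoneda product. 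This yields the asserted $E_2^{*,*}=\text{Ext}^{*,*}_{H^*(M)}(\K,\K)$ together with its algebra structure.

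The part demanding the most care is the compatibility of multiplicative structures under the last identification, namely that the cup product on $HH^*(H^*(M);\K)$ matches the Yoneda product on $\text{Ext}_{H^*(M)}(\K,\K)$, together with the sign bookkeeping that turns $(Drlp)^\vee$ into the Pontryagin product; one should also confirm that the spectral sequence produced by Theorem~\ref{thm:loop_homology_ss} in this case coincides with the classical Eilenberg-Moore spectral sequence of the path-loop fibration, so that the conclusion is an identification of spectral sequences of algebras rather than merely of $E_2$-pages. Everything else is a direct specialization of the theorem.
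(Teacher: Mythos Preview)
Your proposal is correct and follows exactly the paper's approach: the paper simply states ``Taking $N$ to be the point, we obtain the following well-known corollary'' immediately before this statement and offers no further argument. Your write-up just unpacks this specialization in more detail than the paper does --- identifying $L_{\{*\}}M$ with $\Omega M$, checking that $q^!$ and the sign are trivial so $m$ is the Pontryagin product, and reducing $HH^*(H^*(M);\K)$ to $\text{Ext}_{H^*(M)}^{*,*}(\K,\K)$.
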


When $M=N$ is a closed manifold, Theorem 2.11 has been announced by McClure in~\cite[Theorem B]{McClure}. But the proof has not appeared.
Moreover, McClure claimed that when $M=N$, the Eilenberg-Moore spectral sequence is a spectral sequence
of BV-algebras. We have not yet been able to prove this very interesting claim.


We summarize here spectral sequences converging the loop homology and the Hochschild cohomology of the singular cochain on a space, 
which are mentioned at the beginning of the Introduction. 

\medskip
\noindent
\ \ \  {\small 
\begin{tabular}{|l|l|}
\hline
The homological Leray-Serre type & The cohomological Eilenberg-Moore type \\
\hline
$E_{-p,q}^2=H^p(M; H_q(\Omega M))$  
        &  $E^{p,q}_2=HH^{p,q}(H^*(M); H^*(M))$  \\ 
 \  \  \  \  \  \  \  \  $ \Rightarrow {\mathbb H}_{-p+q}(LM)$ as an algebra, 
   &  \  \  \  \  \  \  $\Rightarrow {\mathbb H}_{-p-q}(LM)$ as an algebra, \\ 
where $M$ is a simply-connected closed & 
                where $M$ is a simply-connected Poincar\'e \\ 
 oriented manifold; see \cite{C-J-Y}.       &   duality  space; see Theorem \ref{thm:loop_homology_ss}. \\ \hline
$E_{p,q}^2=H^{-p}(M)\otimes \text{Ext}_{C^*(M)}^{-q}(\K, \K)$  & 
                            $E^{p,q}_2=HH^{p,q}(H^*(M); H^*(M))$ \\ 
 \  \  \  \  \  \ $\Rightarrow HH^{-p-q}(C^*(M); C^*(M))$   &  
             \  \  \  \  \  \   $\Rightarrow HH^{p+q}(C^*(M); C^*(M))$  \\
as an algebra, where $M$ is a simply-  &  
as a B-V algebra, where  $M$ is a simply- \\  
connected space whose cohomology is  &  connected Poincar\'e duality space; see \cite{Kuri2011}. \\
 locally finite; see \cite{S}. & \\ \hline 
\end{tabular} 
}

\medskip 
\noindent
Observe that  each spectral sequence in the table above converges strongly to the target.   

It is important to remark that, for a fibration $N \to X \to M$ of closed orientable manifolds,   
Le Borgne \cite{LeB} 
has constructed a spectral sequence converging to the loop homology 
${\mathbb H}_*(LX)$ as an algebra with 
$E_2 \cong {\mathbb H}_*(LM)\otimes {\mathbb H}_*(LN)$ under an appropriate assumption; 
see also \cite{C-L} for applications of the spectral sequence. 
We refer the reader to \cite{Me} for spectral sequences concerning a generalized homology theory 
in string topology.

We focus on a global nature of the loop (co)product. 
Drawing on  the torsion functor description of the loop product and the loop coproduct 
mentioned in Theorems \ref{thm:torsion_product} and  \ref{thm:torsion_coproduct}, we have  the following result. 

\begin{thm} 
\label{thm:loop(co)product}
Let $M$ be a simply-connected Poincar\'e duality space. Then the composite 
$(\text{the loop product})\circ(\text{the loop coproduct})
$ is trivial.  
\end{thm}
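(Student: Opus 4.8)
The plan is to prove the dual statement $Dlcop\circ Dlp=0$ on $H^*(LM)$ and then dualize. Since the loop product and the loop coproduct are defined as the duals of $Dlp$ and $Dlcop$, and $(lp\circ lcop)^\vee=lcop^\vee\circ lp^\vee=Dlcop\circ Dlp$, the composite $(\text{loop product})\circ(\text{loop coproduct})$ is trivial if and only if $Dlcop\circ Dlp$ vanishes. I would work entirely inside the torsion functor model, transporting $Dlp$ and $Dlcop$ along the isomorphism $EM'$ to the explicit comultiplication $\widetilde{(Dlp)}$ of Theorem~\ref{thm:torsion_product} and the multiplication of Theorem~\ref{thm:torsion_coproduct}, both of which $EM'$ respects.

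Next I would splice the two commutative diagrams together. The crucial bookkeeping observation is that $\widetilde{(Dlp)}$ ends with the normalization isomorphism $\widetilde{\top}^{-1}$ (Remark~\ref{definition generalized T-product}) while the multiplication of Theorem~\ref{thm:torsion_coproduct} begins with $\widetilde{\top}$; these cancel. After this cancellation the composite is a string of $\text{Tor}$-maps, and I would collect those acting on the left-hand module variable. Reading them in order of application these are $\Delta^!$, then $\Delta^*$, then $\Delta^!$, then $\Delta^*$, so by functoriality of $\text{Tor}$ the left variable $C^*(M)$ is acted on by the single endomorphism $\Delta^*\circ\Delta^!\circ\Delta^*\circ\Delta^!=(\Delta^*\circ\Delta^!)^{\circ 2}$ in the relevant derived category of $C^*(M^4)$-modules.

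The key step is then the identity $\Delta^*\circ\Delta^!\simeq \cup\, e$, cup product with the Euler class $e=H(\Delta^*\Delta^!)(1)\in H^m(M)$; this should follow from the projection formula for shriek maps established in the Appendices together with the defining property in Theorem~\ref{thm:main_F-T}. Consequently $(\Delta^*\circ\Delta^!)^{\circ 2}\simeq \cup\, e^2$. Since $M$ is a Poincar\'e duality space of dimension $m\ge 1$ we have $H^{2m}(M)=0$, hence $e^2=0$, so $\cup\, e^2$ is null-homotopic; as $\text{Tor}$ is a functor on the derived category it carries this null-homotopic left-variable morphism to the zero map, regardless of the accompanying changes of ground algebra and of the right variable. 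Therefore $Dlcop\circ Dlp=0$, as desired (the degenerate case $m=0$, forcing $M$ to be a point, is trivial).

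I expect the main obstacle to lie in steps two and three: one must check that the intermediate changes of ground algebra (the maps $p_{13}^*$, $(1\times\Delta\times 1)^*$, $\alpha^*$) and of the right-hand variable are compatible so that the four left-variable maps genuinely compose by functoriality, and that $\Delta^!$ and $\Delta^*$ are honest morphisms in one and the same derived category, so that $\Delta^*\circ\Delta^!$ makes sense and equals $\cup\, e$ at the derived, not merely cohomological, level. The vanishing itself is then immediate from Poincar\'e duality. Finally, I would record that the obstruction to triviality is precisely the class $e^2$: for a general Gorenstein space $H^*(M)$ need not vanish above degree $m$, so $e^2$ may survive in the corresponding $\text{Ext}$ group, which is exactly the hom-set obstruction anticipated in Remark~\ref{rem:obstruction}.
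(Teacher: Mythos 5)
Your reduction coincides with the paper's: after transporting along $EM'$, cancelling $\widetilde{\top}$ against $\widetilde{\top}^{-1}$ and checking that the changes of rings are compatible, the composite $Dlcop\circ Dlp$ is controlled by the left-variable morphism $\Delta^*\circ\Delta^!\circ\Delta^*\circ\Delta^!$ in $\D(\text{Mod-}C^*(M^4))$; this is exactly the chain of torsion functors displayed in the paper's proof. Where you diverge is in why this morphism vanishes, and this is where your argument has a gap. You assert $\Delta^*\circ\Delta^!\simeq\; e\cup\text{--}$ \emph{in the derived category}, citing a ``projection formula'' from the Appendices; no such derived-level formula is established there (the paper only proves the cohomology-level identity $H^*(\Delta^!)\circ H^*(\Delta)=H^*(\Delta^!)(1)\cup\text{--}$, in the proof of Corollary~\ref{cor:trivial coproduct in EMSS}). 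The identity does not follow from $C^*(M^2)$-linearity together with the value on $1$: a degree-$m$ morphism $C^*(M)\to C^*(M)$ in $\D(\text{Mod-}C^*(M^2))$ is an element of $\text{Ext}^{m}_{C^*(M^2)}(C^*(M),C^*(M))$, and since $\text{Ext}^{*}_{C^*(M^2)}(C^*(M),C^*(M))\cong \mathbb{H}_{-*}(LM)$ is much larger than $H^*(M)$, such a morphism is in general not determined by the map it induces on cohomology. Your step can be salvaged --- by Lemma~\ref{isoextfreeloops} the group $\text{Ext}^{m}_{C^*(M^2)}(C^*(M),C^*(M))\cong H_0(LM)\cong\K$ is one-dimensional, left cup with a cocycle representative of $e$ is an honest chain-level module endomorphism with the same evaluation at $1$, and evaluation at $1$ is injective on this one-dimensional group --- but note that this repair already requires the key Lemma~\ref{isoextfreeloops}.

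Once that lemma is available, the paper's route is shorter and bypasses the identification of the element altogether: it applies the lemma one degree higher to conclude that $\text{Ext}^{2m}_{C^*(M^4)}(C^*(M),C^*(M))_{(wv)^*,\Delta^*\alpha^*}\cong H_{-m}(M^{S^1\vee S^1\vee S^1})=\{0\}$, so the \emph{entire group} containing your fourfold composite vanishes and no Euler class enters. This also clarifies your closing remark: the obstruction recorded in Remark~\ref{rem:obstruction} for a general Gorenstein space is the class $\Delta^*\circ\Delta^!\circ\Delta^*\circ\Delta^!$ in that $\text{Ext}$ group, not $e^2\in H^{2m}(M)$; when the relevant $\text{Ext}$ groups are no longer one-dimensional your identification $\Delta^*\circ\Delta^!\simeq e\cup\text{--}$ breaks down, and $e^2$ is only a shadow of the actual obstruction.
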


When $M$ is a connected closed oriented manifold, the triviality of this composite
was first proved by Tamanoi~\cite[Theorem A]{T}.
Tamanoi has also shown that this composite is trivial when 
$M$ is the classifying space $BG$ of a connected Lie group $G$~\cite[Theorem 4.4]{Tamanoi:stabletrivial}.

We are aware that the description of the loop coproduct in 
Theorem \ref{thm:torsion_coproduct}  has no {\it opposite arrow} such as 
$\text{Tor}_{(1\times \Delta \times 1)^*}(1, \Delta^*)$ in Theorem \ref{thm:torsion_product}.  
This is a key to the proof of Theorem \ref{thm:loop(co)product}.
Though we have not yet obtained  
the same result  as Theorem \ref{thm:loop(co)product} on a more general Gorenstein space, some obstruction for the composite to be trivial is described in Remark \ref{rem:obstruction}. 


We may describe the loop product in terms of the extension functor. 

\begin{thm}\label{thm:freeloopExt}
Let $M$ be a simply-connected Poincar\'e duality space.
Consider the multiplication defined by the composite
$$
{\footnotesize 
\xymatrix@C20pt@R20pt{ 
\text{\em Ext}^*_{C^*(M^{2})}(C^*(M), C^*(M))_{\Delta^*, \Delta^*}^{\otimes 2}\ar[r]_-{\cong}^-{\widetilde{\vee}}\ar@{.>}[dd] 
&\text{\em Ext}^{*}_{C^*(M^{4})}(C^*(M^{2}), C^*(M^2))_{{\Delta^2}^*, {\Delta^2}^*}\ar[d]^{\text{\em Ext}_{1}(1,\Delta^*)}\\
&\text{\em Ext}^*_{C^*(M^{4})}(C^*(M^ 2), C^*(M))_{{\Delta^2}^*,(\Delta^2\circ\Delta)^*}\\
\text{\em Ext}^*_{C^*(M^{2})}(C^*(M), C^*(M))_{\Delta^*, \Delta^*}
&\text{\em Ext}^*_{C^*(M^{3})}(C^*(M), C^*(M))_{((1\times \Delta)\circ \Delta)^*, ((1\times \Delta)\circ \Delta)^*}.
\ar[u]_{\text{\em Ext}_{(1\times \Delta \times 1)^*}({\Delta}^*,1)}^{\cong} \ar[l]^-{\text{\em Ext}_{p_{13}^*}(1, 1)}
}
}
$$
See Remark~\ref{definition generalized V-product} below for the definition of $\widetilde{\vee}$.
The cap with a representative $\sigma$ of the fundamental class $[M]\in H_m(M)$ gives a quasi-isomorphism of right-$C^*(M)$-modules
of upper degre $-m$,
$$\sigma \cap \text{--}  : C^*(M)\buildrel{\simeq}\over\rightarrow C_{m-*}(M), x\mapsto
\sigma\cap x.$$
Let $\Phi:H^{*+m}(LM)\buildrel{\cong}\over\rightarrow
\text{\em Tor}_{C^*(M^{\times 2})}^*(C_*(M), C^*(M))$ be the composite of the isomorphisms
$$
{\footnotesize 
\xymatrix@C15pt@R25pt{ 
H^{p+m}(LM)\ar[r]^-{EM^{-1}}_-\cong
& \text{\em Tor}_{C^*(M^{2})}^{p+m}(C^*(M), C^*(M^I))_{\Delta^*,p^*} \ar[r]^{\text{\em Tor}_{1}(1,\sigma^*)}_\cong
&\text{\em Tor}_{C^*(M^{2})}^{p+m}(C^*(M), C^*(M))_{\Delta^*,\Delta^*}\ar[d]^{\text{\em Tor}_{1}(\sigma \cap \text{--},1)}_\cong\\
&&\text{\em Tor}_{C^*(M^{2})}^p(C_*(M), C^*(M)).
}
}
$$
Then the dual of $\Phi$, $\Phi^\vee: \text{\em Ext}^{-p}_{C^*(M^{2})}(C^*M, C^*M)_{\Delta^*, \Delta^*}\rightarrow H_{p+m}(LM)$
is an isomorphism which respects the multiplication defined here and the loop product.
\end{thm}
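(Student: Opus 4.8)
The plan is to reduce Theorem~\ref{thm:freeloopExt} to Theorem~\ref{thm:torsion_product} by linear duality, using Poincar\'e duality to pass from the torsion product $\text{Tor}_{C^*(M^2)}(C^*M,C^*M)$ describing the \emph{dual} loop product to the extension functor $\text{Ext}_{C^*(M^2)}(C^*M,C^*M)$ describing the loop product itself. The conceptual backbone is the Hom--tensor adjunction: since $C_*(M)^\vee=C^*(M)$, there is a natural isomorphism
$$D:\text{Tor}^p_{C^*(M^2)}(C_*M,C^*M)^\vee\buildrel{\cong}\over\longrightarrow \text{Ext}^{-p}_{C^*(M^2)}(C^*M,C^*M),$$
valid because $H^*(M)$ is of finite type, which is exactly the identification implicit in the statement of $\Phi^\vee$. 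First I would record $D$ and its naturality: under $D$ a change-of-rings map $\text{Tor}_{\varphi}(\psi,1)$ dualizes to $\text{Ext}_{\varphi}(1,\psi^\vee)$, and the generalized $\top$-product $\widetilde{\top}$ of Remark~\ref{definition generalized T-product} dualizes to the generalized $\vee$-product $\widetilde{\vee}$ of Remark~\ref{definition generalized V-product}, since the $\top$- and $\vee$-products of Cartan--Eilenberg are mutually dual.

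Second, I would dualize Theorem~\ref{thm:torsion_product}. By that theorem $EM'$ carries the dual loop product $Dlp$ to the comultiplication $\widetilde{(Dlp)}$ on $\text{Tor}_{C^*(M^2)}(C^*M,C^*M)$; transposing, the loop product on $H_*(LM)$ (which is $Dlp^\vee$ up to the normalizing sign of Proposition~\ref{prop:loop_homology}) is carried by $(EM')^\vee$ to the multiplication $\widetilde{(Dlp)}^\vee$. Since $\Phi=\text{Tor}_1(\sigma\cap\text{--},1)\circ EM'$, it then suffices to show that the Poincar\'e-duality map $\text{PD}=\text{Tor}_1(\sigma\cap\text{--},1)$, together with $D$, carries $\widetilde{(Dlp)}^\vee$ to the explicitly diagram-defined multiplication of Theorem~\ref{thm:freeloopExt}.

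Third, this last identification is the heart of the argument and is carried out arrow by arrow. Each projection-induced map in the diagram of Theorem~\ref{thm:torsion_product} dualizes, via $D$, to the correspondingly-indexed map in the diagram of Theorem~\ref{thm:freeloopExt}: $\text{Tor}_{p_{13}^*}(1,1)$ to $\text{Ext}_{p_{13}^*}(1,1)$ and $\text{Tor}_{(1\times\Delta\times1)^*}(1,\Delta^*)$ to $\text{Ext}_{(1\times\Delta\times1)^*}(\Delta^*,1)$. The decisive point is that $\text{Tor}_1(\Delta^!,1)$ dualizes to $\text{Ext}_1(1,\Delta^*)$: here the shriek map $\Delta^!$, appearing in the covariant (left) variable of $\text{Tor}$, must be matched with the pullback $\Delta^*$ appearing in the contravariant (right) variable of $\text{Ext}$. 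The bridge is exactly the quasi-isomorphism $\sigma\cap\text{--}:C^*(M)\buildrel{\simeq}\over\rightarrow C_{m-*}(M)$, which conjugates $\Delta^*$ into $\Delta^!$; one proves this by checking that the Poincar\'e dual of $\Delta^*$ sends the fundamental class as required in Theorem~\ref{thm:main_F-T} and then invoking the uniqueness clause of that theorem.

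The main obstacle I expect is precisely this third step, and within it the sign bookkeeping. One must verify at the (co)chain and derived level that $\sigma\cap\text{--}$ intertwines $\Delta^*$ and $\Delta^!$---a statement whose careful, signed form is furnished by the appendices on shriek maps---and then propagate the signs through the duality $D$, the cap product (which has upper degree $-m$), the Cartan--Eilenberg $\top\leftrightarrow\vee$ duality, and the normalizing sign in the definition of the loop product. No single step is conceptually deep once the Poincar\'e-duality compatibility of $\Delta^!$ with $\Delta^*$ is in hand; the difficulty is entirely in assembling these compatibilities coherently and with consistent signs.
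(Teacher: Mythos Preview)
Your proposal is correct and follows essentially the same route as the paper: reduce to Theorem~\ref{thm:torsion_product}, use the natural isomorphism $\text{Ext}^*_B(Q,P^\vee)\cong\text{Tor}^*_B(P,Q)^\vee$, and identify $\text{Tor}_1(\Delta^!,1)$ with $\text{Ext}_1(1,\Delta^*)$ via Poincar\'e duality (the paper does this by first passing from $\Delta^!$ to the chain-level pushforward $\Delta_*$ through the cap-product square of Proposition~\ref{shriek and Poincare duality}(1) and Corollary~\ref{shriek diagonal}, then dualizing $\Delta_*$ to $\Delta^*$). Your phrasing ``$\sigma\cap\text{--}$ conjugates $\Delta^*$ into $\Delta^!$'' is slightly loose---the cap product actually intertwines $\Delta^!$ with $\Delta_*$, and it is the subsequent linear dual that turns $\Delta_*$ into $\Delta^*$---but this is exactly the two-step mechanism the paper uses, and your identification of the sign bookkeeping as the only real hazard matches the paper's reliance on the appendices.
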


\begin{rem}\label{definition generalized V-product}
The isomorphism $\widetilde{\vee}$ in
Theorem~\ref{thm:freeloopExt}
is the composite
$$
{\footnotesize 
\xymatrix@C15pt@R25pt{ 
 \text{Ext}^*_{C^*(M^{2})}(C^*M, C^*M)^{\otimes 2}
\ar[r]^-\vee
& \text{Ext}^*_{C^*(M^{2})^{\otimes 2}}(C^*(M)^{\otimes 2},
C^*(M)^{\otimes 2})\ar[d]^{\text{Ext}_1(1,\gamma)}\\
\text{Ext}^*_{(C_*(M^{2})^{\otimes 2})^\vee}((C_*(M)^{\otimes 2})^\vee,
(C_*(M)^{\otimes 2})^\vee)
\ar[r]_-{\text{Ext}_\gamma(\gamma,1)}^-\cong
\ar[d]_{\text{Ext}_{EZ^\vee}(EZ^\vee,1)}^-\cong
&\text{Ext}^*_{C^*(M^{2})^{\otimes 2}}(C^*(M)^{\otimes 2},
(C_*(M)^{\otimes 2})^\vee)\\
\text{Ext}^*_{C^*(M^{4})}(C^*(M^{2}), (C_*(M)^{\otimes 2})^\vee)
&\text{Ext}^*_{C^*(M^{4})}(C^*(M^{2}), C^*(M^2))
\ar[l]_{\text{Ext}_{1}(1,EZ^\vee)}^-\cong
}
}
$$
where $\vee$ is the $\vee$-product of
Cartan-Eilenberg~\cite[XI. Proposition 1.2.3]{CartanEilenberg}
or~\cite[VIII.Theorem 4.2]{MacLanehomology}, $EZ:C_*(M)^{\otimes
  2}\buildrel{\simeq}\over\rightarrow C_*(M^2)$ denotes the Eilenberg-Zilber
quasi-isomorphism
 and $\gamma:\text{Hom}(C_*(M), \K)^{\otimes 2}\rightarrow
 \text{Hom}(C_*(M)^{\otimes 2}, \K)$ is the canonical map.
\end{rem}
\begin{rem}
We believe that the multiplication on $\text{Ext}^*_{C^*(M^{2})}(C^*(M), C^*(M))_{\Delta^*, \Delta^*}$
defined in Theorem~\ref{thm:freeloopExt} coincides with the Yoneda product.
\end{rem}

Denote by $A(M)$ the functorial commutative differential graded
algebra $A_{PL}(M)$; see ~\cite[Corollary 10.10]{F-H-T}.
Let $\varphi:A(M)^{\otimes 2}\buildrel{\simeq}\over\rightarrow A(M^2)$
be the quasi-isomorphism of algebras given by~\cite[Example 2 p. 142-3]{F-H-T}.
Remark that the composite $\Delta^*\circ\varphi$ coincides with the multiplication of $A(M)$.
Remark also that we have an Eilenberg-Moore isomorphism $EM$ for the
functor $A(M)$; see \cite[Theorem 7.10]{F-H-T}.

Replacing the singular cochains over the rationals $C^*(M;\mathbb{Q})$ by the commutative algebra $A_{PL}(M)$ in Theorem~\ref{thm:torsion_product},
we obtain the following theorem. 

\begin{thm}{\em (}Compare with~\cite{F-T:rationalBV}{\em
    )}\label{rational iso of Felix-Thomas Gorenstein}
Let $N$ be a simply-connected Gorenstein space of dimension $n$ and
$N\rightarrow M$ a continuous map to a simply-connected space $M$.
Let $\Phi$ be the map given by the commutative square
$$
\xymatrix@C25pt@R25pt{ 
H^{p+n}(A(L_N M))\ar@{.>}[d]_\Phi
& \text{\em Tor}^{A(M^{2})}_{-p-n}(A(N), A(M^I))_{\Delta^*,p^*} \ar[d]^{\text{Tor}^{1}(1,\sigma^*)}_\cong\ar[l]_-{EM}^-\cong\\
HH_{-p-n}(A(M),A(N))\ar[r]^-{\text{\em Tor}^{\varphi}(1,1)}_-\cong
&\text{\em Tor}^{A(M^{2})}_{-p-n}(A(N), A(M))_{\Delta^*,\Delta^*}.
}
$$
Then the dual 
$
\xymatrix@1{
HH^{-p-n}(A(M),A(N)^\vee)\ar[r]^-{\Phi^\vee}
& H_{p+n}(L_NM;\mathbb{Q})
}
$ to $\Phi$
is an isomorphism of graded algebras with respect to the loop product $Dlp^\vee$
and the generalized cup product
on Hochschild cohomology induced by 
$(\Delta_{A(N)})^\vee:A(N)^\vee\otimes A(N)^\vee\rightarrow A(N)^\vee$
(See Example~\ref{generalized cup product Gorenstein}).
\end{thm}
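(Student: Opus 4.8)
The plan is to transport the torsion-functor description of the dual loop product from the singular cochain functor $C^*(-;\mathbb{Q})$ to the functorial commutative model $A(-)=A_{PL}(-)$, and then to exploit commutativity of $A(M)$ in order to rewrite the relevant torsion product as Hochschild homology. Concretely, I would first run the relative version of Theorem~\ref{thm:torsion_product} described in Remark~\ref{rem:relative_cases} with $A(-)$ in place of $C^*(-;\mathbb{Q})$: because $A_{PL}$ is a contravariant functor into commutative DGA's admitting an Eilenberg--Moore isomorphism \cite[Theorem 7.10]{F-H-T}, every step in the construction of $EM$, of the shriek map $\Delta^!$, and of the comultiplication $\widetilde{(Dlp)}$ goes through with $C^*(N)$ replaced by $A(N)$ in the left-hand variables. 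This yields the map $EM$ appearing in the stated commutative square and identifies its domain with $H^{*}(A(L_NM))=H^*(L_NM;\mathbb{Q})$.

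Next I would establish the remaining isomorphisms of the square. The map $\text{Tor}^{1}(1,\sigma^*)$ is an isomorphism exactly as in Theorem~\ref{thm:torsion_product}, coming from the quasi-isomorphism $\sigma:M\hookrightarrow M^I$. For the bottom map $\text{Tor}^{\varphi}(1,1)$, I would use the quasi-isomorphism of algebras $\varphi:A(M)^{\otimes2}\buildrel{\simeq}\over\rightarrow A(M^2)$ to change rings, obtaining $\text{Tor}^{A(M^2)}_{*}(A(N),A(M))_{\Delta^*,\Delta^*}\cong\text{Tor}^{A(M)^{\otimes2}}_{*}(A(N),A(M))$, where the second variable carries the $A(M)^{\otimes2}$-action via $\Delta^*\circ\varphi=\mu_{A(M)}$ and the first via $f^*$. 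Since $A(M)$ is commutative, this last torsion product is by definition the Hochschild homology $HH_{*}(A(M),A(N))$, with $A(M)$ resolved as the diagonal bimodule and $A(N)$ the coefficient bimodule through $f^*:A(M)\to A(N)$. Composing these identifications defines $\Phi$, which is an isomorphism because each constituent is; dualizing over $\mathbb{Q}$ then produces the isomorphism $\Phi^\vee:HH^{-p-n}(A(M),A(N)^\vee)\to H_{p+n}(L_NM;\mathbb{Q})$.

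The heart of the argument, and the step I expect to be the main obstacle, is to check that $\Phi^\vee$ intertwines the generalized cup product with the loop product $Dlp^\vee$. Here I would transport the full comultiplication diagram of Theorem~\ref{thm:torsion_product} through the isomorphisms above and verify that, under the identification with Hochschild homology, the maps $\widetilde{\top}$, $\text{Tor}_{p_{13}^*}(1,1)$ and $\text{Tor}_{1}(\Delta^!,1)$ assemble into the comultiplication on $HH_*(A(M),A(N))$ dual to the generalized cup product. The essential simplification is that, since $A(M)$ is commutative, the Eilenberg--Zilber map $EZ$ and the canonical map $\gamma$ of Remark~\ref{definition generalized T-product} become compatible with the (co)algebra structures, so that $\widetilde{\top}$ reduces to the standard external product on Hochschild homology; the only genuinely non-formal input is the shriek map $\Delta^!$ of the Gorenstein space $N$, whose dual supplies precisely the coproduct $\Delta_{A(N)}$ on $A(N)$, equivalently the product $(\Delta_{A(N)})^\vee$ on $A(N)^\vee$, defining the generalized cup product of Example~\ref{generalized cup product Gorenstein}. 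Careful tracking of the degree shift by $n=\dim N$ and of the sign $(-1)^n$ normalizing $(-1)^d\overline{H(\Delta^{!})^\vee}$ will be required throughout.

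Finally, specializing to $M=N$ a Poincar\'e duality space, Poincar\'e duality identifies $A(N)^\vee$ with a degree shift of $A(N)$ and carries the generalized cup product to the ordinary cup product on $HH^*(A(M),A(M))$, so that the isomorphism recovers the F\'elix--Thomas identification $\mathbb{H}_*(LM;\mathbb{Q})\cong HH^*(A_{PL}(M),A_{PL}(M))$ asserted in the abstract.
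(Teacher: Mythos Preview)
Your outline follows essentially the same route as the paper: replace $C^*$ by $A_{PL}$ in Theorem~\ref{thm:torsion_product} (the paper justifies this in its Step~1 by noting that $A_{PL}$ extends to pairs and is naturally weakly equivalent to singular cochains), then use the quasi-isomorphism $\varphi:A(M)^{\otimes 2}\buildrel{\simeq}\over\rightarrow A(M^2)$ to replace torsion over $A(M^k)$ by torsion over $A(M)^{\otimes k}$ (the paper's Step~2), dualize via $\text{Ext}^*_B(Q,P^\vee)\cong\text{Tor}^*_B(P,Q)^\vee$ (Step~3), and identify the result with the generalized cup product (Step~4).

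The one place where your sketch is genuinely looser than the paper is the ``heart of the argument'': you say the maps $\widetilde{\top}$, $\text{Tor}_{p_{13}^*}(1,1)$, and $\text{Tor}_1(\Delta^!,1)$ ``assemble into'' the comultiplication dual to the generalized cup product, but you omit the fourth map $\text{Tor}_{(1\times\Delta\times 1)^*}(1,\Delta^*)^{-1}$ and do not say how the identification is actually carried out. The paper handles this via Lemma~\ref{lem:generalizeddecompositioncup}, specifically part~(5): after passing through $\varphi$ the four maps become $\text{Tor}^{1\otimes\eta\otimes 1}(1,1)$, $\text{Tor}^{1\otimes\mu\otimes 1}(1,\mu)^{-1}$, $\text{Tor}^1(\Delta_{A(N)},1)$, and the external product, and upon dualizing these are \emph{exactly} the composite appearing in the diagram of Lemma~\ref{lem:generalizeddecompositioncup}(5), which that lemma identifies with the generalized cup product. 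This lemma, and the choice of $\Delta_{A(N)}$ so that $\varphi\circ\Delta_{A(N)}=\Delta^!_A$ in the derived category of $A(N)^{\otimes 2}$-modules, are the technical ingredients you should make explicit. Your final paragraph on the Poincar\'e duality case is the content of Corollary~\ref{rational iso of Felix-Thomas Poincare}, which is proved separately.
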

\begin{cor}\label{rational iso of Felix-Thomas Poincare}
Let $N$ be a simply-connected Poincar\'e duality space of dimension
$n$.
Let $N\rightarrow M$ be a continuous map to a simply-connected space $M$.
Then $HH^{-p}(A(M),A(N))$ is isomorphic as graded algebras to
$H_{p+n}(L_NM;\mathbb{Q})$ with respect to the loop product $Dlp^\vee$ and the
cup product on Hochschild cohomology induced by the morphism of
algebras
$A(f):A(M)\rightarrow A(N)$ (See Remark~\ref{cup product of an algebra
  morphism} and Definition~\ref{cup product Hochschild}(1))
\end{cor}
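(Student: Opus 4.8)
The plan is to deduce the corollary from Theorem~\ref{rational iso of Felix-Thomas Gorenstein} by specializing the coefficient bimodule via Poincar\'e duality. First I would observe that a simply-connected Poincar\'e duality space is a simply-connected Gorenstein space by \cite[Theorem 3.1]{FHT_G}, so that Theorem~\ref{rational iso of Felix-Thomas Gorenstein} applies and produces an isomorphism of graded algebras
$$
\Phi^\vee : HH^{-p-n}(A(M), A(N)^\vee) \xrightarrow{\cong} H_{p+n}(L_N M; \mathbb{Q}),
$$
in which the source carries the generalized cup product induced by $(\Delta_{A(N)})^\vee$ and the target carries the loop product $Dlp^\vee$. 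It then suffices to construct an isomorphism of graded algebras $HH^{-p}(A(M), A(N)) \cong HH^{-p-n}(A(M), A(N)^\vee)$ carrying the ordinary cup product induced by $A(f)$ to the generalized cup product induced by $(\Delta_{A(N)})^\vee$.

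The second step is Poincar\'e duality at the cochain level. Choosing a cocycle representing the fundamental class of $N$ and capping with it yields, exactly as with the quasi-isomorphism $\sigma \cap \text{--}$ of upper degree $-m$ in Theorem~\ref{thm:freeloopExt}, a quasi-isomorphism of $A(N)$-bimodules $A(N) \to A(N)^\vee$ of degree $-n$. Since $A(N)$ is regarded as an $A(M)$-bimodule through the algebra morphism $A(f)$, this is in particular a quasi-isomorphism of $A(M)$-bimodules, and it therefore induces an isomorphism on Hochschild cohomology, the degree shift by $n$ converting $-p$ into $-p-n$:
$$
HH^{-p}(A(M), A(N)) \cong HH^{-p-n}(A(M), A(N)^\vee).
$$

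The third and decisive step is to check that this isomorphism intertwines the two product structures. The multiplication of the commutative algebra $A(N)$, together with the $A(M)$-bimodule structure coming from $A(f)$, induces the cup product of the corollary (Remark~\ref{cup product of an algebra morphism} and Definition~\ref{cup product Hochschild}(1)). I would argue that the Poincar\'e duality quasi-isomorphism carries this multiplication to the product $(\Delta_{A(N)})^\vee$ on $A(N)^\vee$; concretely, this amounts to the assertion that for a Poincar\'e duality algebra the comultiplication $\Delta_{A(N)}$ of Example~\ref{generalized cup product Gorenstein} is Poincar\'e dual to the multiplication, which is precisely the classical duality between the cup product and the intersection product. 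Composing the three isomorphisms then gives the corollary.

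I expect the main obstacle to be this last compatibility. The hard part will be to pin down the exact definition of $\Delta_{A(N)}$ from Example~\ref{generalized cup product Gorenstein}, to verify that capping with the fundamental cocycle is multiplicative in the sense required to send the multiplication of $A(N)$ to $(\Delta_{A(N)})^\vee$, and to control all the signs, including a possible factor $(-1)^n$ and the $(-1)^d$ appearing in Theorem~\ref{rational iso of Felix-Thomas Gorenstein}, so that the ordinary cup product and the generalized cup product match on the nose under the duality isomorphism.
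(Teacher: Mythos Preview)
Your proposal is correct and follows essentially the same route as the paper: apply Theorem~\ref{rational iso of Felix-Thomas Gorenstein}, then transport the algebra structure along a Poincar\'e duality quasi-isomorphism $\theta_N:A(N)\buildrel{\simeq}\over\rightarrow A(N)^\vee$ of degree $-n$, reducing everything to the compatibility you single out in your third step. The paper carries out that compatibility by showing, in the derived category of $A(N)^{\otimes 2}$-modules, that $\theta_N$ is a morphism of algebras with respect to the multiplication of $A(N)$ and the product $(\Delta_{A(N)})^\vee=\Delta_A^{!\vee}\circ\varphi^\vee$ on $A(N)^\vee$; the right square uses Corollary~\ref{shriek diagonal} (so the signs do come out exactly, answering your worry), the left rectangle is checked on $1$ via the equality $\varphi^\vee\circ\theta_{N\times N}(1)=\theta_N(1)\otimes\theta_N(1)$, and the passage to Hochschild cohomology is Example~\ref{examples naturality cup product}(ii).
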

\begin{rem}
When $M=N$ is  a Poincar\'e duality space,
such an isomorphism of algebras between Hochschild cohomology and Chas-Sullivan loop space homology was first proved
in \cite{F-T:rationalBV} (See also~\cite{Merkulov} over $\mathbb{R}$). But here our isomorphism is explicit since we do not use a Poincar\'e duality DGA model for $A(M)$
given by~\cite{L-S}.
In fact, as explain in~\cite{F-T:rationalBV}, such an isomorphism is an isomorphism of BV-algebras,
since $\Phi$ is compatible with the circle action and Connes boundary map. Here the BV-algebra on
$ HH^{*}(A(M),A(M))$ is given by~\cite[Theorem 18 or Proof of Corollary 20]{Menichi_BV_Hochschild}.
\end{rem}

In the forthcoming paper \cite{K-L}, we discuss the loop (co)products on the classifying space $BG$ of a Lie group 
$G$ by looking at the integration along the fibre $(Comp)^! : H^*(LBG\times_{BG} LBG) \to H^*(LBG)$ of the homotopy fibration 
$G \to LBG\times_{BG}LBG \to BG$. 
In a sequel \cite{KMnoetherian}, we intend to investigate duality on extension groups of the (co)chain complexes of spaces. 
Such discussion enables one to deduce that Noetherian H-spaces are Gorenstein. In adding, 
the loop homology of a Noetherian H-space is considered.

The rest of this paper is organized as follows. Section 3 is devoted to proving 
Theorems \ref{thm:torsion_product}, \ref{thm:torsion_coproduct}, \ref{thm:EMSS} 
and Corollary \ref{cor:trivial coproduct in EMSS}. 
Theorem \ref{thm:loop(co)product} is proved in Section 4. In Section 5, 
we recall the generalized cup product on the Hochschild cohomology defied by appropriate shriek map. 
Section 6 proves Theorems \ref{thm:loop_homology_ss}, \ref{thm:freeloopExt} and 
\ref{rational iso of Felix-Thomas Gorenstein} and Corollary \ref{rational iso
  of Felix-Thomas Poincare}. 
We prove Proposition \ref{prop:loop_homology} and 
discuss the associativity and commutativity of the loop product  
on Poincar\'e duality space in Section 7. 
In the last three sections, Appendix, 
shriek maps on Gorenstein spaces are considered and their important properties, which we use in the body of the paper,  are described. 




\section{Proofs of Theorems \ref{thm:torsion_product}, \ref{thm:torsion_coproduct} and 
\ref{thm:EMSS}}

In order to prove  Theorem \ref{thm:torsion_product}, 
we consider two commutative diagrams   
$$
\xymatrix@C7pt@R10pt{
  & LM \times LM \ar[rr]^{i}  \ar@{->}'[d]^{p\times p}[dd]  
  & & M^I\times M^I \ar[dd]^{p\times p=p^2} \\
LM\times _MLM \ar[rr]_(0.6){j} \ar[ru]^{q} \ar[dd] & & M^I\times_MM^I \ar[ru]_{\widetilde{q}} \ar[dd]^(0.3){ev_0, ev_1, ev_1=u} & \\
   & M\times M \ar@{->}'[r]^(0.6){\Delta \times \Delta}[rr] & & M^{4} \\
M \ar[rr]_{(1\times \Delta)\circ \Delta=v} \ar[ur]^{\Delta} & & M^{3}   \ar[ru]_{1\times \Delta \times 1=w} &
}
\eqnlabel{add-0}
$$
and 
$$
\xymatrix@C7pt@R10pt{
  & LM \times_M LM \ar[ld]_(0.7){Comp} \ar@{->}'[d][dd]  \ar[rr]^{j} & & M^I\times_M M^I \ar[ld]^{Comp=c} \ar[dd]^{(ev_0, ev_1=ev_0, ev_1)=u} \\
LM \ar[rr]^(0.6){k} \ar[dd]_p & & M^I \ar[dd]^(0.3)p & \\
   & M \ar@{->}'[r]^(0.6){(1\times \Delta)\circ \Delta}[rr] \ar@{=}[ld]& & M^{3} \ar[ld]_{p_{13}}\\
M \ar[rr]_\Delta  & & M\times M &
}
\eqnlabel{add-1}
$$
in which front and back squares are pull-back diagrams. Observe that 
the left and right hand side squares in (3.1) are also pull-back diagrams. 
Here $\Delta$ and $k$ denote the diagonal map and the inclusion, respectively. Moreover 
$p_{13} : M^3 \to M^2$ is the projection defined by $p_{13}(x, y, z) =(x, z)$ and  
$Comp : LM\times_MLM \to LM$ stands for the concatenation of loops. 
The cube (3.2) first appeared in~\cite[p. 320]{F-T:rationalBV}.

\begin{proof}[Proof of  Theorem \ref{thm:torsion_product}] 
Consider the diagram 
$$
\xymatrix@C10pt@R8pt{
\text{Tor}^*_{C^*(M^{2})}(C^*(M), C^*(M^I))_{\Delta^*, p^*} 
\ar[r]^(0.45){\text{Tor}_{p_{13}^*}(1, c^*)}   \ar[d]^(0.6){\cong}_(0.6){EM}                   & 
                       \text{Tor}^*_{C^*(M^{3})}(C^*(M), C^*(M^I\times_M M^I))_{v^*, u^*}  
                        \ar@/_/[ldd]_(0.6){EM_1}^(0.6){\cong}  \\
 H^*(LM)  \ar[d]_{Comp^*} \ar@/_5pc/[dd]_{Dlp}&  
                       \text{Tor}^*_{C^*(M^{4})}(C^*(M), C^*(M^I\times M^I))_{(wv)^*, {p^2}^*}  
                             \ar[u]_{\text{Tor}_{w^*}(1, {\widetilde{q}}^*)}^{\cong} 
                                          \ar[d]^{\text{Tor}_{1}(\Delta^!, 1)} \ar[ld]^{EM_2}_{\cong}\\
H^*(LM\times_M LM)    \ar[d]_{H(q^!)} & 
\text{Tor}^{*+m}_{C^*(M^{4})}(C^*(M^{2}), C^*(M^I\times M^I))_{{\Delta^2}^*, {p^2}^*} . 
\ar[dl]^(0.6){\cong}_(0.6){EM_3} \\                                  
H^{*+m}(LM\times LM)
}
\eqnlabel{add-2}
$$
\noindent where $EM$ and $EM_i$ denote the Eilenberg-Moore maps.

The diagram (3.2) is a morphism of pull-backs from the back face to
the front face. 
Therefore the naturality of the Eilenberg-Moore map yields that 
the upper-left triangle is commutative.

We now consider the front square  
and the right-hand side square in the diagram (3.1). 
The squares are pull-back diagrams and hence we have 
a large pull-back one connecting them.  
Therefore the naturality of the Eilenberg-Moore map shows that
the triangle in the center of the diagram (3.3) is commutative.
Thus it follows  that the map $\text{Tor}_{w^*}(1, {\widetilde{q}}^*)$ is an isomorphism.

Let $\varepsilon:{\mathbb B}\buildrel{\simeq}\over\rightarrow C^*(M)$
be a right $C^*(M^2)$-semifree resolution of $C^*(M)$.
By~\cite[Proof of Theorem 2 or Remark p. 429]{F-T}, 
 the following square is commutative in the derived category of right $C^*(LM\times LM)$-modules..
$$
\xymatrix{
C^*(LM\times_M LM)\ar[r]^{q^!} & C^*(LM\times LM)  \\
{\mathbb B}\otimes_{C^*(M^2)}C^*(LM\times LM) \ar[u]^{EM_4}_{\simeq}
\ar[r]_-{\Delta^! \otimes 1_{C^*(LM\times LM) }} & 
C^*(M^2)\otimes_{C^*(M^2)}C^*(LM\times LM) \ar@{=}[u] ^{EM_5}
}
$$
By taking homology, we obtain the top square in the following diagram commutes.
$$
{\footnotesize
\xymatrix@C18pt@R25pt{
H^*(LM\times_M LM)\ar[r]^{H^*(q^!)}
& H^{*+m}(LM\times LM)  \\
\text{Tor}^*_{C^*(M^2)}(C^ *(M),C^*(LM\times LM)) \ar[u]^{EM_4}_{\cong}
\ar[r]_(0.45){\text{Tor}_{1}(\Delta^!, 1)} & 
\text{Tor}^{*+m}_{C^*(M^2)}(C^ *(M^2),C^*(LM\times LM)) \ar@{=}[u]^{EM_5} \\
\text{Tor}^*_{C^*(M^{4})}(C^*(M), C^*(M^I\times M^I))
\ar[r]_(0.45){\text{Tor}_{1}(\Delta^!, 1)} \ar[u]_{\text{Tor}^*_{(\Delta\times\Delta)^*}(1,i^*)}
\ar@/^8pc/[uu]^(0.7){EM_2}
& \text{Tor}^{*+m}_{C^*(M^{4})}(C^*(M^2), C^*(M^I\times M^I))
\ar[u]^{\text{Tor}^*_{(\Delta\times\Delta)^*}(1,i^*)}\ar@/_8pc/[uu]_(0.7){EM_3}
}
}
\eqnlabel{add-3}
$$
The bottom square commutes obviously.
We now consider the left-hand square  
and the back square in the diagram (3.1). 
The squares are pull-back diagrams and hence we have 
a large pull-back one connecting them.  
Therefore the naturality of the Eilenberg-Moore map shows that the left-hand side in (3.4) is commutative.
The same argument or the definition of the Eilenberg-Moore map shows that the right-hand side in (3.4) is commutative.

So finally, the lower square in (3.3) is commutative.

The usual proof~\cite[p. 26]{G-M} that the Eilenberg-Moore isomorphism
$EM$ is an isomorphism of algebras with respect to the cup product
gives
the following commutative square
$$
\xymatrix@C15pt@R10pt{
H^*(LM)^{\otimes 2}\ar[d]^\cong_\times
& \text{Tor}^{*}_{C^*(M^{2})}(C^*(M), C^*(M^I))_{{\Delta}^*, {p}^*}^{\otimes 2} 
\ar[l]_-{\cong}^-{EM^{\otimes 2}}\ar[d]_\cong^{\widetilde{\top}}\\
H^{*}(LM\times LM)
&\text{Tor}^{*}_{C^*(M^{4})}(C^*(M^{2}), C^*(M^I\times M^I))_{{\Delta^2}^*, {p^2}^*} 
\ar[l]^-{\cong}_-{EM_3}.
}
$$
This square is the top square in \cite[p. 255]{Mccleary}.

Consider the commutative diagram of spaces where the three composites of the vertical morphisms are the diagonal maps
$$
\xymatrix@C25pt@R10pt{
M\ar[d]_\sigma^\simeq\ar@/_3pc/[dd]_{\Delta}
\ar@{=}[r]
&M\ar[r]^\Delta\ar[d]_{\sigma'}^\simeq
& M\times M\ar[d]_{\sigma\times\sigma}^\simeq\ar@/^3pc/[dd]^{\Delta\times\Delta}\\
M^I\ar[d]_p
&M^I\times_M M^I\ar[r]_{\widetilde{q}}\ar[d]^u\ar[l]^-c
& M^I\times M^I\ar[d]_{p\times p}\\
M\times M 
&M^3\ar[r]_w\ar[l]^{p_{13}}
& M^4.
}
$$
Using the homotopy equivalence $\sigma$, $\sigma'$ and $\sigma^2$,
we have the result. 
\end{proof}

We decompose the maps, which induce the loop coproduct, with pull-back diagrams. 
Let $l : LM \to M\times M$ 
be a map defined by $l(\gamma)= (\gamma(0), \gamma(\frac{1}{2}))$. 
We define a map $\varphi :LM \to LM$ by $\varphi(\gamma)(t)=\gamma(2t)$ for 
$0\leq t \leq\frac{1}{2}$ and $\varphi(\gamma)(t)=\gamma(1)$ for 
$\frac{1}{2} \leq t \leq 1$. Then $\varphi$
is homotopic to the identity map
and fits into the commutative diagram  
$$
\xymatrix@C10pt@R5pt{
  & LM  \ar[ld]_{\varphi}^{\simeq} \ar@{->}'[d][dd]^(0.4){ev_0}  \ar[rr]^{j} & & M^I \ar[ld]^(0.4){\beta} \ar[dd]^{ev_0, ev_1=p} \\
LM \ar[rr] \ar[dd]_{l} & & M^I \times M^I \ar[dd]^(0.3){p\times p}& \\
   & M \ar@{->}'[r]^(0.7){\Delta}[rr] \ar@{->}[ld]_{\Delta} & & M\times M \ar[ld]^{\alpha} \\
M \times M \ar[rr]_{\gamma'}  & & M^{4}.  &
}
\eqnlabel{add-3}
$$
Here the maps $\alpha : M^{2} \to M^{4}$, $\beta : M^I \to M^I \times M^I$ and 
$\gamma' : M^{2} \to M^{4}$ are defined by $\alpha(x, y) = (x, y, y, y)$, $\beta(r) = (r, c_{r(1)})$ with the constant loop 
$c_{r(1)}$ at $r(1)$ and 
$\gamma'(x, y) =(x, y, y, x)$, respectively. 
We consider moreover the two pull-back squares
$$
\xymatrix@C15pt@R15pt{
LM\times _M LM \ar[r]^-{Comp}\ar[d]
& LM\ar[r]\ar[d]_l
& M^I\times M^I\ar[d]^{p\times p}\\
M\ar[r]_\Delta
& M\times M\ar[r]_{\gamma'}
& M^4
}
\eqnlabel{add-4}
$$
and the commutative cube
$$
\xymatrix@C10pt@R8pt{
  & LM \times_M LM \ar[ld]_(0.55){q} \ar@{->}'[d][dd]  \ar[rr]^{i\circ
  q} & & M^I\times M^I 
  \ar@{=}[ld] \ar[dd]^{p^2} \\
LM \times LM \ar[rr]^(0.6){i} \ar[dd] & & M^I \times M^I \ar[dd]^(0.3){p^2} & \\
   & M \ar@{->}'[r]^(0.6){(\Delta\times \Delta)\circ \Delta}[rr] \ar@{->}[ld]_{\Delta} 
                                    & & M^{4} \ar@{=}[ld] \\
M \times M\ar[rr]_{\Delta \times \Delta}  & & M^{4} &
}
\eqnlabel{add-5}
$$
in which 
front and back squares are also pull-back diagrams.  

\begin{proof}[Proof of Theorem \ref{thm:torsion_coproduct}] We see that 
the diagrams (3.5), (3.6) and (3.7) give rise to a commutative diagram
$$
\hspace{-0.5cm}
{
\xymatrix@C12pt@R20pt{
 H^*(LM\times LM)  \ar[d]_{q^*} \ar@/_4pc/[dd]_{Dlcop}&  
\text{Tor}^*_{C^*(M^{4})}(C^*(M^2), C^*((M^I)^2))_{{\Delta^2}^*,{p^2}^*}   
                                          \ar[d]^{\text{Tor}_{1}(\Delta^*, 1)} \ar[l]_-{EM}^-{\cong}\\
H^*(LM\times_M LM)    \ar[d]_{Comp^!} & 
 \text{Tor}^*_{C^*(M^{4})}(C^*(M), C^*(M^I\times M^I))_{(wv)^*, {p^2}^*}  
                                          \ar[d]^{\text{Tor}_{1}(\Delta^!, 1)} \ar[l]^-{EM}_-{\cong} \\
H^{*+m}(LM) \ar[d]_{\varphi^*=id} & 
\text{Tor}^{*+m}_{C^*(M^{4})}(C^*(M^2), C^*(M^I\times M^I))_{{\gamma'}^*, {p^2}^*} . 
\ar[l]^-{\cong}_-{EM} \ar[d]_{\text{Tor}_{\alpha^*}(\Delta^*, \beta^*)}^{\cong}
 \\
H^{*+m}(LM) &
\text{Tor}^{*+m}_{C^*(M^{2})}(C^*(M), C^*(M^I))_{{\Delta}*,{p}^*}. \ar[l]^-{EM}_-{\cong}
}
}
$$
In fact, the diagrams (3.5) and (3.7) give morphisms of pull-backs from the back face to
the front face. 
Therefore the naturality of the Eilenberg-Moore map yields that 
the top and the bottom squares are commutative.

Using the diagram (3.6), the same argument as in the proof 
of Theorem \ref{thm:torsion_product} enables us to conclude that  the
middle square is commutative. 

Since the following diagram of spaces 
$$
\xymatrix@C15pt@R18pt{
M\ar[r]^\Delta\ar[d]_\sigma^\simeq\ar@/_3pc/[dd]_{\Delta}
& M\times M\ar[d]_{\sigma\times\sigma}^\simeq\ar@/^3pc/[dd]^{\Delta\times\Delta}\\
M^I\ar[r]^\beta\ar[d]_p
& M^I\times M^I\ar[d]_{p\times p}\\
M\times M \ar[r]^\alpha
& M^4 
}
$$
is commutative, 
the theorem follows.
\end{proof}

By considering the free loop fibration
$\Omega M\buildrel{\widetilde{\eta}}\over\rightarrow
LM\buildrel{ev_0}\over\rightarrow M$, we define for Gorenstein space
(see Example~\ref{intersection fibration}) an intersection
morphism $H(\widetilde{\eta}_!):H_{*+m}(LM)\rightarrow H_*(\Omega M)$
generalizing the one defined by Chas and Sullivan~\cite{C-S}.
Using the following commutative cube
$$
\xymatrix@C10pt@R8pt{
  & LM\ar[rr]  \ar@{->}'[d]^{ev_0}[dd]  
  & & M^I \ar[dd]^{p} \\
\Omega M \ar[rr] \ar[ru]^{\widetilde{\eta}} \ar[dd]
&& PM \ar[ru]_{\widetilde{\eta\times 1}} \ar[dd]^(0.3){ev_1} & \\
   & M \ar@{->}'[r]^(0.6){\Delta}[rr] & &
   M\times M \\
{*} \ar[rr] \ar[ur]^{\eta} & &
{*}\times M  \ar[ru]_{\eta \times 1} &
}
$$
where all the faces are pull-backs, we obtain similarly the following theorem. 
\begin{thm}
Let $M$ be a simply-connected Gorenstein with generator
$\omega_M$ in $\text{\em Ext}^{m}_{C^*(M)}(\K,C^*(M))$. Then the dual of the intersection
morphism $H(\widetilde{\eta}^!)$ is given by the commutative diagram
$$
\xymatrix@C12pt@R20pt{
H^*(\Omega M)\ar[dd]_{H(\tilde{\eta}^!)}
& \text{\em Tor}^*_{C^*(M)}(\K,C^*(PM))_{{\eta}*,{ev_1}^*}
\ar[l]_-{EM}^-{\cong}\ar[r]^{\text{\em Tor}^*_{1}(1,\eta^*)}_\cong
& \text{\em Tor}^*_{C^*(M)}(\K,\K)_{{\eta}*,{\eta}^*}\\
& \text{\em Tor}^*_{C^*(M^2)}(\K,C^*(M^I))_{{\eta}*,{p}^*}
\ar[lu]^-{EM}_-{\cong}\ar[d]^{\text{\em Tor}^*_{1}(\omega_M,1)}
\ar[u]_{\text{\em Tor}^*_{(\eta\times 1)^*}(1,(\widetilde{\eta\times 1})^*)}^\cong
\ar[r]^{\text{\em Tor}^*_{1}(1,\sigma^*)}_\cong
&\text{\em Tor}^*_{C^*(M^2)}(\K,C^*M)_{{\eta}*,{\Delta}^*}
\ar[d]^{\text{\em Tor}^*_{1}(\omega_M,1)}
\ar[u]_{\text{\em Tor}^*_{(\eta\times 1)^*}(1,\eta^*)}^\cong\\
H^{*+m}(LM)
& \text{\em Tor}^*_{C^*(M^2)}(C^*(M),C^*(M^I))_{{\Delta}*,{p}^*}\ar[l]^-{EM}_-{\cong}
\ar[r]^{\text{\em Tor}^*_{1}(1,\sigma^*)}_\cong
&\text{\em Tor}^*_{C^*(M^2)}(C^*M,C^*M)_{{\Delta}*,{\Delta}^*}.
}
$$
\end{thm}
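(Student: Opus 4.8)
The plan is to run the argument of Theorems~\ref{thm:torsion_product} and~\ref{thm:torsion_coproduct} once more, now with the single cube displayed above in place of the cubes (3.1), (3.2) and (3.5)--(3.7), and with the fibre shriek map $\omega_M$ playing the role that $\Delta^!$ played there. The starting point is the observation that, $M$ being Gorenstein of dimension $m$, the generator $\omega_M\in\text{Ext}^m_{C^*(M)}(\K,C^*(M))$ is exactly the shriek map $\eta^!$ attached to the basepoint inclusion $\eta:*\rightarrow M$, and that the left-hand face of the cube is the pull-back expressing $\Omega M$ as the homotopy fibre $LM\times_M *$ of $ev_0$. By the construction used to prove Theorem~\ref{thm:main_F-T} (see also the remark following Theorem~\ref{thm:ext}), the intersection shriek $\tilde\eta^!$ is then the unique morphism in $\D(\text{Mod-}C^*(M))$ obtained by extending $\omega_M$ over this pull-back; concretely, choosing a right $C^*(M)$-semifree resolution of $\K$ and arguing as in~\cite[Proof of Theorem 2 or Remark p. 429]{F-T}, the map $\tilde\eta^!$ is realized on the chain level by $\omega_M\otimes 1$. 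Passing to homology, this identifies the left square of the diagram, namely the one relating $H(\tilde\eta^!)$ to $\text{Tor}_1(\omega_M,1)$ through the two Eilenberg--Moore maps, in the same way that (3.4) was obtained in the proof of Theorem~\ref{thm:torsion_product}.

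Next I would extract the remaining commutativities from the cube itself. All six faces are pull-backs, so the cube is a morphism of pull-back squares from its back face to its front face; the naturality of the Eilenberg--Moore map then forces the triangle adjacent to $H^*(\Omega M)$ to commute, exactly as the upper-left triangle of (3.3) did. Splicing the left-hand face with the back face, respectively the right-hand face with the front face, into a single larger pull-back and invoking naturality of $EM$ once more yields the commutativity of the two squares carrying $\text{Tor}_{(\eta\times1)^*}(1,-)$ and shows that these maps are isomorphisms. The square built from the two copies of $\text{Tor}_1(\omega_M,1)$ and the two copies of $\text{Tor}_1(1,\sigma^*)$ commutes for the formal reason of bifunctoriality of the torsion product, since $\omega_M$ acts on the left variable and $\sigma^*$ on the right.

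Finally the horizontal isomorphisms $\text{Tor}_1(1,\sigma^*)$ and $\text{Tor}_1(1,\eta^*)$, induced by the inclusion of constant paths $\sigma:M\hookrightarrow M^I$ and by $\eta:*\rightarrow M$, are compatible with the upward maps $\text{Tor}_{(\eta\times1)^*}(1,-)$ because the auxiliary square of spaces relating $\sigma$, $\eta$, $\beta$ and $\eta\times1$ (the fibre-wise analogue of the last diagram of spaces in each of the two previous proofs) commutes. Assembling all of these commutative cells gives the asserted diagram. I expect the genuine difficulty to lie not in any new idea but in the bookkeeping: verifying that the indicated pull-backs really splice into the larger pull-backs demanded by naturality, keeping straight the various module structures recorded by the subscripts $\eta^*,p^*,\Delta^*,ev_1^*$, and---as the unusual sign $(-1)^d$ elsewhere in the paper warns---controlling the sign introduced when the degree-$m$ class $\omega_M$ is commuted past elements of the torsion products. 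That the theorem is claimed to follow ``similarly'' is precisely the assertion that no obstacle beyond this bookkeeping arises.
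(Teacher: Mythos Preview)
Your proposal is correct and follows essentially the same approach as the paper, which itself offers no proof beyond the phrase ``we obtain similarly the following theorem'' after displaying the cube. You have accurately unpacked what ``similarly'' means here: identify $\omega_M$ with $\eta^!$, realize $\tilde{\eta}^!$ as its extension over the left pull-back face via the argument of \cite[Proof of Theorem 2]{F-T}, and then invoke naturality of the Eilenberg--Moore map across the various pull-back faces of the cube exactly as in the proofs of Theorems~\ref{thm:torsion_product} and~\ref{thm:torsion_coproduct}.
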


\medskip
Let $\widehat{\mathcal F}$ be the pull-back diagram in the front of (3.1). Let  
$\widetilde{\mathcal F}$ denote the pull-back diagram obtained by combining the front and 
the right hand-side squares in (3.1).  
Then a map inducing the isomorphism 
$\text{Tor}_{w^*}(1, {\widetilde{q}}^*)$ gives rise to 
a morphism $\{f_r\} : \{\widetilde{E}_r, \widetilde{d}_r \} \to \{\widehat{E}_r, \widehat{d}_r \}$ 
of spectral sequences, where 
$\{\widehat{E}_r, \widehat{d}_r \}$ and  $\{\widetilde{E}_r, \widetilde{d}_r \}$ are the 
Eilenberg-Moore spectral sequences associated with the fibre squares 
$\widehat{\mathcal F}$ and $\widetilde{\mathcal F}$, respectively. 
In order to prove Theorem \ref{thm:EMSS}, we need the following lemma.

\begin{lem}
\label{lem:key}
The map  $f_2$ is an isomorphism. 
\end{lem}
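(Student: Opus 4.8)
The plan is to identify the $E_2$-terms of both spectral sequences explicitly and show that the map $f_2$ induced by $\text{Tor}_{w^*}(1,\widetilde{q}^*)$ becomes an isomorphism after passing to the $E_2$-page. Recall that the Eilenberg--Moore spectral sequence associated to a pull-back square has $E_2$-term given by the (classical, non-differential) torsion product $\text{Tor}_{H^*(\text{base})}(H^*(-),H^*(-))$ of the cohomologies of the two maps into the base. So for $\widehat{\mathcal F}$, which sits over $M\times M$ with the maps $\Delta\times\Delta$ and $p^2$, the $E_2$-term is
$$
\widehat{E}_2 = \text{Tor}_{H^*(M^2)}\big(H^*(M^2),\, H^*(M^I\times M^I)\big)_{(\Delta\times\Delta)^*,\,(p^2)^*},
$$
while for $\widetilde{\mathcal F}$, which sits over $M^3$ via $v=(1\times\Delta)\circ\Delta$ and $u$, the $E_2$-term is
$$
\widetilde{E}_2 = \text{Tor}_{H^*(M^3)}\big(H^*(M),\, H^*(M^I\times_M M^I)\big)_{v^*,\,u^*}.
$$
The map $f_2$ is the map on these $E_2$-pages induced by $w=1\times\Delta\times 1$ on the base and $\widetilde{q}$ upstairs.

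First I would reduce the claim to a purely algebraic statement about classical torsion products, using that on $E_2$ the Eilenberg--Moore machinery replaces everything by cohomology rings and ordinary Tor. The key geometric inputs are that $M$ is simply connected and that the relevant fibrations have fibres whose cohomology is well understood: the path-space ends $M^I\times M^I$ and $M^I\times_M M^I$ are homotopy equivalent (via the constant-path inclusions $\sigma$) to $M^2$ and $M$ respectively, so their cohomologies collapse to $H^*(M^2)$ and $H^*(M)$ as modules over the appropriate base rings. Thus the two $E_2$-terms simplify to
$$
\widehat{E}_2 \cong \text{Tor}_{H^*(M^2)}\big(H^*(M^2),H^*(M^2)\big), \qquad
\widetilde{E}_2 \cong \text{Tor}_{H^*(M^3)}\big(H^*(M),H^*(M)\big),
$$
with the module structures dictated by the maps in the diagram (3.1). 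The morphism $f_2$ then becomes a base-change map $\text{Tor}_{w^*}(1,\widetilde{q}^*)$ for the ring homomorphism $w^*:H^*(M^4)\to H^*(M^3)$.

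The heart of the argument is to show this algebraic base-change map is an isomorphism, and I expect this to be the main obstacle. The right tool is a change-of-rings (flat/faithfully-flat base-change, or a Künneth-type) isomorphism for Tor: the point is that the square formed by the front and right-hand faces of (3.1) is a large pull-back, so geometrically the comparison is the naturality of the Eilenberg--Moore spectral sequence for a morphism of pull-back squares in which the relevant map is a weak equivalence on total spaces. Since $\widetilde{q}$ is a quasi-isomorphism on cohomology (both $M^I\times_M M^I$ and $M^I\times M^I$ deformation retract compatibly, and the induced map on fibres of the two fibrations is an equivalence), and $M$ is simply connected so the EMSS converges strongly, the induced map of $E_2$-terms is forced to be an isomorphism. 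Concretely I would verify that $w^*$ together with the module identifications realizes the standard collapse isomorphism $\text{Tor}_{H^*(M^4)}(-,-)\cong\text{Tor}_{H^*(M^3)}(-,-)$ coming from the factorization $M^4 = M^3\times_{M^2}M^3$ type decomposition encoded by the pull-back, exactly as in the proof of Theorem~\ref{thm:torsion_product} where $\text{Tor}_{w^*}(1,\widetilde{q}^*)$ was already shown to be an isomorphism on the abutment; the present lemma upgrades that to the $E_2$-level. The only real care needed is checking that the module structures (governed by $v^*$, $u^*$ versus $(\Delta\times\Delta)^*$, $(p^2)^*$) match under the identifications, which is a bookkeeping exercise using the commutativity of the cube (3.1) rather than a deep point.
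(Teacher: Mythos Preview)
Your overall plan---reduce $f_2$ to a map of classical Tor groups over cohomology rings and then prove that map is an isomorphism---is the right one and is exactly what the paper does. But the execution has both a bookkeeping error and a genuine gap.

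First, the bookkeeping: you have misidentified the two pull-back squares. The paper defines $\widehat{\mathcal F}$ as the \emph{front} face of the cube (3.1), so its base is $M^{3}$ (via $v$ and $u$), while $\widetilde{\mathcal F}$ is the combined front-plus-right square with base $M^{4}$. After the deformation retractions $M^{I}\times_{M}M^{I}\simeq M$ and $M^{I}\times M^{I}\simeq M^{2}$, the map $f_{2}$ becomes
\[
\text{Tor}_{H^{*}(w)}(1,H^{*}(\Delta)):\;
\text{Tor}_{H^{*}(M^{4})}\big(H^{*}(M),H^{*}(M^{2})\big)\longrightarrow
\text{Tor}_{H^{*}(M^{3})}\big(H^{*}(M),H^{*}(M)\big),
\]
not a map with base $H^{*}(M^{2})$ as you wrote. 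This is fixable, but the correct identification matters for the next step.

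The real gap is in how you propose to prove this Tor map is an isomorphism. You assert that $\widetilde{q}$ is a quasi-isomorphism on cohomology; it is not. Under the constant-path retractions, $\widetilde{q}$ becomes the diagonal $\Delta:M\to M^{2}$, and $H^{*}(\widetilde{q})$ is the cup-product map $H^{*}(M^{2})\to H^{*}(M)$, which is surjective but certainly not injective. You then argue that strong convergence together with the isomorphism on the abutment (established in the proof of Theorem~\ref{thm:torsion_product}) ``forces'' $f_{2}$ to be an isomorphism. This implication is false in general: a morphism of strongly convergent spectral sequences that is an isomorphism on $E_{\infty}$ need not be one on $E_{2}$. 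You cannot bootstrap the $E_{2}$-statement from the abutment; the whole point of Lemma~\ref{lem:key} is to prove it directly.

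What the paper actually does is recognize the displayed Tor map as the case $A=H^{*}(M)$, $M=H^{*}(M)$ of the purely algebraic Lemma~\ref{lem:generalizeddecompositioncup}(1): for a commutative algebra $A$ and $A$-module $M$, the change-of-rings map
\[
\text{Tor}^{1\otimes\mu\otimes 1}(1,\mu):\;
\text{Tor}^{A^{\otimes 4}}_{*}(M,A\otimes A)\longrightarrow
\text{Tor}^{A^{\otimes 3}}_{*}(M,A)
\]
is an isomorphism. That lemma is proved by an explicit bar-resolution computation: both sides are identified with $M\otimes B(\K,A,\K)^{\otimes 2}$ and the map is the identity. Your ``change-of-rings'' intuition is pointing in this direction, but you need to actually carry out such a computation (or invoke a lemma of this type) rather than appeal to convergence.
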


\begin{proof}
We identify $f_2$ with the map  
$$
\text{Tor}_{H^*(w)}(1, H^*(\Delta)): 
 \text{Tor}_{H^*(M^4)}(H^*M, H^*(M\times M))\rightarrow
 \text{Tor}_{H^*(M^3)}(H^*M, H^*M).
$$
\noindent
up to isomorphism between the $E_2$-term and the torsion product.
Thus, in order to obtain the result, it suffices to
apply part (1) of Lemma~\ref{lem:generalizeddecompositioncup} for the algebra $H^ *(M)$ and the module $H^*(M)$. 
\end{proof}

We are now ready to give the EMSS (co)multiplicative structures. 

\medskip
\noindent
{\it Proof of Theorem \ref{thm:EMSS}.}
Gugenheim and May~\cite[p. 26]{G-M} have shown that the map
$\widetilde{\top}$
induces a morphism of spectral sequences from $E_r\otimes E_r$ 
to the Eilenberg-Moore spectral sequence converging to $H^*(LM\times
LM)$.
In fact, $\widetilde{\top}$ induces an isomorphism of spectral sequences.
All the other maps between torsion products in  
Theorems \ref{thm:torsion_product} and \ref{thm:torsion_coproduct} preserve the filtrations. 
Thus in view of Lemma \ref{lem:key}, we have Theorem \ref{thm:EMSS}.

In fact, the shriek map 
$\Delta^!$ is in $\text{Ext}_{C^*(M^2)}^m(C^*(M), C^*(M^2))$. Then we have 
$d\Delta^! = (-1)^m\Delta^!d$. 
Let $\{\widehat{E}_r^{*,*} , \widehat{d}_r\}$ and $\{\widetilde{E}_r^{*,*} , \widetilde{d}_r\}$ 
be the EMSS's converging to $\text{Tor}_{C^*(M^4)}^*(C^*(M), C^*((M^I))^2)$ and 
$\text{Tor}_{C^*(M^4)}^*(C^*(M^2), C^*((M^I)^2))$, respectively. Let 
$\{f_r\} : \{\widehat{E}_r^{*,*} , \widehat{d}_r\} \to
\{\widetilde{E}_r^{*,*} , \widetilde{d}_r\}$ be the morphism of
spectral sequences which gives rise to $\text{Tor}_1(\Delta^!, 1)$. 
Recall the map $\Delta^! \otimes 1 : {\mathbb B}\otimes_{C^*(M^4)}{\mathbb B}' \to 
C^*(M^2)\otimes_{C^*(M^4)}{\mathbb B}'$ in the proof of 
Theorem \ref{thm:torsion_product}.  
It follows that, for any $b\otimes b' \in {\mathbb B}\otimes_{C^*(M^4)}{\mathbb B}'$, 
\begin{eqnarray*}
(\Delta^!\otimes 1)d(b\otimes b' )&=& \Delta^!\otimes 1(db\otimes b' + (-1)^{\deg b}b\otimes db') \\
&=&\Delta^!db\otimes b' + (-1)^{\deg b}\Delta^! b\otimes db' \\
&=& (-1)^md\Delta^!b\otimes b' + (-1)^{\deg b}\Delta^! b\otimes db'.
\end{eqnarray*}
On the other hand, we see that 
\begin{eqnarray*}
d(\Delta^!\otimes 1)(b\otimes b' )&=& d(\Delta^!(b\otimes b')) \\
&=&d\Delta^!b\otimes b' + (-1)^{\deg b +m}\Delta^! b\otimes db' 
\end{eqnarray*}
and hence $(\Delta^!\otimes 1)d=(-1)^m d (\Delta^!\otimes 1)$. 
This implies that $f_r\widehat{d}_r = (-1)^m\widetilde{d}_rf_r$. 
The fact yields  the compatibility of the multiplication 
with the differential of the spectral sequence. 

The same argument does work well to show the compatibility of the comultiplication with the differential of the EMSS. 
\hfill\qed

\medskip
\noindent
{\it Proof of Corollary \ref{cor:trivial coproduct in EMSS}.}
Since $H^*(\Delta^!)$ is $H^*(M^2)$-linear, it follows that 
$H^*(\Delta^!)\circ H^*(\Delta)(x)=H^*(\Delta^!)(1)\cup x$.
If $d<0$ then $H^*(\Delta^!)(1)=0$.

If $d=0$ then  $H^*(\Delta^!)(1)=\lambda 1$ where $\lambda\in \K$ and so the composite
$H^*(\Delta^!)\circ H^*(\Delta)$ is the multiplication by the scalar  $\lambda$.
Let $m$ be an non-trivial element of positive degre in $H^*(M)$.
Then we see that $0=H^*(\Delta^!)\circ H^*(\Delta)(m\otimes 1-1\otimes m)=\lambda(m\otimes 1-1\otimes m).$
Therefore $\lambda=0$.

So in both cases, we have proved that $H^*(\Delta^!)\circ H^*(\Delta)=0$.
Since $H^*(\Delta)$ is surjective, $H^*(\Delta^!):H^*(M)\rightarrow H^{*+d}(M^2)$ is trivial.
In particular, the induced maps  
$\text{Tor}^{*}_{H^*(M^{4})}(H^*(\Delta^!), H^*(M^I\times_M M^I))$
and
$\text{Tor}^{*}_{H^*(M^{4})}(H^*(\Delta^!), H^*((M^I)^2)$
are trivial.
Then it follows from Theorems~\ref{thm:torsion_product}
and \ref{thm:torsion_coproduct} that both the comultiplication and the multiplication on the $E_2$-term of the EMSS, 
which correspond to the duals to loop
product and loop coproduct on $H^*(LM)$ are null.
Therefore, $E^{*,*}_\infty\cong \text{Gr}H^*(LM)$ is equipped with a trivial coproduct and a trivial product. 
Then the conclusion follows.
\hfill\qed
\begin{rem}
It follows from Corollary \ref{cor:trivial coproduct in EMSS}
that under the hypothesis of Corollary \ref{cor:trivial coproduct in EMSS}
the two composites
$$
\xymatrix@C30pt@R5pt{
H^*(M)\otimes H^*(M) \ar[r]^(0.48){p^*\otimes p^*} & H^*(LM)\otimes H^*(LM)
\ar[r]^(0.6){Dlcop}  &H^*(LM)  
}
$$
and 
$$
\xymatrix@C25pt@R5pt{
H_*(LM)\otimes H_*(LM)
\ar[rr]^(0.6){\text{Loop product}} & &H_*(LM)
\ar[r]^{p_*} & H_*(M)
}
$$
are trivial. 
This can also be proved directly since
we have the commuting diagram
$$
\xymatrix@C30pt@R15pt{
H^*(LM\times_M LM)\ar[r]^{H(Comp^!)}
&H^*(LM\times LM)\\
H^*(M)\ar[r]_{H(\Delta^!)}\ar[u]
&H^*(M\times M)\ar[u]_{(p \times p) ^*}
}
$$
and since $p_*:H_*(LM)\rightarrow H_*(M)$ is a morphism of graded algebras
with respect to the loop product and to the intersection product
$H(\Delta_!)$. As we saw in the proof of Corollary \ref{cor:trivial coproduct in EMSS}, under the hypothesis of Corollary
\ref{cor:trivial coproduct in EMSS},
$H^*(\Delta^!)$ and its dual $H_*(\Delta_!)$ are trivial.
\end{rem}

\section{Proof of Theorem \ref{thm:loop(co)product}} 
The following Lemma is interesting on his own since it gives a very
simple proof of a result of Klein (see Remark~\ref{Klein} below).  
\begin{lem}\label{isoextfreeloops}
Let $M$ be an oriented simply-connected Poincar\'e duality space of dimension $m$.
Let $M\twoheadrightarrow B$ be a fibration.
Denote by $M\times_B M$ the pull-back over $B$.
Then for all $p\in {\Z}$,
$\text{\em Ext}^{-p}_{C^*(B)}(C^*(M),C^*(M))$ is isomorphic to $H_{p+m}(M\times_B M)$ as a vector space.
\end{lem}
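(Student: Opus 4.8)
The plan is to compute $\text{Ext}_{C^*(B)}(C^*(M),C^*(M))$ by three successive reductions: dualize the target module by Poincar\'e duality, convert $\text{Ext}$ into the $\K$-dual of a differential torsion product by the tensor-hom adjunction, and then read off $H_*(M\times_B M)$ from the Eilenberg-Moore isomorphism. Throughout, $C^*(M)$ is regarded as a $C^*(B)$-module via $\pi^*$, where $\pi:M\twoheadrightarrow B$ denotes the given fibration.

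First I would use that $M$ is an oriented Poincar\'e duality space of dimension $m$. As recalled in Theorem~\ref{thm:freeloopExt}, capping with a representative $\sigma$ of the fundamental class gives a quasi-isomorphism $\sigma\cap\text{--}:C^*(M)\xrightarrow{\simeq}C_{m-*}(M)$ of $C^*(M)$-modules of upper degree $-m$, hence a quasi-isomorphism of $C^*(B)$-modules. Since $H_*(M)$ is of finite type, $C_{m-*}(M)$ is in turn $C^*(B)$-quasi-isomorphic to $\Sigma^m(C^*(M))^\vee$. Combining the two identifications,
$$\text{Ext}^{-p}_{C^*(B)}(C^*(M),C^*(M))\cong \text{Ext}^{-p-m}_{C^*(B)}(C^*(M),(C^*(M))^\vee),$$
the shift by $m$ being the total contribution of the duality isomorphism.

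Next I would invoke the tensor-hom adjunction. Over the field $\K$ the functor $(\text{--})^\vee=\text{Hom}_\K(\text{--},\K)$ is exact, so the derived adjunction $\text{RHom}_{C^*(B)}(X,N^\vee)\simeq (X\otimes^{{\mathbb L}}_{C^*(B)}N)^\vee$ applies with $X=N=C^*(M)$; passing to cohomology gives
$$\text{Ext}^{-p-m}_{C^*(B)}(C^*(M),(C^*(M))^\vee)\cong \big(\text{Tor}^{\,p+m}_{C^*(B)}(C^*(M),C^*(M))\big)^\vee.$$
Finally, the Eilenberg-Moore isomorphism for the pull-back square defining $M\times_B M$ identifies $\text{Tor}^{\,p+m}_{C^*(B)}(C^*(M),C^*(M))$ with $H^{p+m}(M\times_B M)$, and one last application of $(\text{--})^\vee$ turns its dual into $H_{p+m}(M\times_B M)$. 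Composing the three displayed isomorphisms yields the stated vector-space isomorphism.

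The formal manipulations are routine; the only delicate points are the degree bookkeeping---one must check that the $m$ produced by Poincar\'e duality together with the grading conventions for differential $\text{Tor}$ assemble to exactly $H_{p+m}$, which they do---and, above all, the Eilenberg-Moore isomorphism. The latter is the genuine input: since $M$ is simply connected, the fibration only forces $\pi_1(B)\cong\pi_0(\text{fibre})$, so $B$ need not be simply connected, and I would have to ensure the hypotheses guaranteeing that the Eilenberg-Moore map for the square $M\times_B M$ is an isomorphism are in force, citing the chain-level statement recalled at the beginning of Section~\ref{thm:main_F-T}'s discussion. It is here, rather than in the adjunction or the duality step, that the real content of the lemma lies.
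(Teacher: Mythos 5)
Your argument is essentially the paper's proof: the paper likewise combines the cap-product quasi-isomorphism $\sigma\cap\text{--}:C^*(M)\xrightarrow{\simeq}C_{m-*}(M)$ of right $C^*(M)$-modules (hence of $C^*(B)$-modules) with the Eilenberg--Moore isomorphism, which it states directly in the dual form $H_{p+m}(M\times_B M)\cong \text{Ext}^{-p-m}_{C^*(B)}(C^*(M),C_*(M))$ --- precisely your tensor-hom adjunction plus dualization step. Your closing caveat that one must verify the hypotheses under which the Eilenberg--Moore map is an isomorphism (the fundamental group of $B$ is not controlled by the simple connectivity of $M$ when the fibre is disconnected) is a legitimate point that the paper's one-line invocation glosses over, although in the intended application $B$ is a power of $M$ and hence simply connected.
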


\begin{rem}
A particular case of Lemma~\ref{isoextfreeloops} is the isomorphim of graded vector spaces
$\text{Ext}^{-p}_{C^*(M\times M)}(C^*(M),C^*(M))\cong H_{p+m}(LM)$ underlying the isomorphism of algebras
given in Theorem~\ref{thm:freeloopExt}.
Note yet that in the proof of Lemma~\ref{isoextfreeloops}, we consider right $C^*(B)$-modules and that in the proof of Theorem~\ref{thm:freeloopExt}, we need left $C^*(M^2)$-modules; see Section 10. 
\end{rem}

\begin{rem}\label{Klein}
Let $F$ be the homotopy fibre of $M\rightarrow B$.
In~\cite[Theorem B]{KleinPoincare}, Klein shows in term of spectra that
$
\text{Ext}^{-p}_{C_*(\Omega B)}(C_*(F),C_*(F))\cong H_{p+m}(M\times_B M)
$
and so, using the Yoneda product~\cite[Theorem A]{KleinPoincare}, $H_{*+m}(M\times_B M)$ is a graded algebra.

The isomorphism above and that in Lemma  \ref{isoextfreeloops} make us aware of {\it duality} 
on the extension functors of (co)chain complexes of spaces. As mentioned in 
the Introduction, this is one of topics in \cite{KMnoetherian}. 
\end{rem}

\begin{proof}[Proof of Lemma \ref{isoextfreeloops}]
The Eilenberg-Moore map gives an isomorphism
$$
H_{p+m}(M\times_B M)\cong \text{Ext}^{-p-m}_{C^*(B)}(C^*(M),C_*(M)).$$
The cap with a representative $\sigma$ of the fundamental class $[M]\in H_m(M)$ gives a quasi-isomorphism of right-$C^*(M)$-modules
of upper degre $-m$,
$$\sigma \cap \text{--} : C^*(M)\buildrel{\simeq}\over\rightarrow C_{m-*}(M), x\mapsto
\sigma\cap x.$$
Therefore, we have an isomorphism
$$
\text{Ext}^{*}_{C^*(B)}(C^*(M),\sigma \cap \text{--}):\text{Ext}^{-p}_{C^*(B)}(C^*(M),C^*(M))\rightarrow\text{Ext}^{-p-m}_{C^*(B)}(C^*(M),C_*(M))
$$
This completes the proof. 
\end{proof}
\begin{proof}[Proof of Theorem \ref{thm:loop(co)product}]
Theorems   \ref{thm:torsion_product}
and \ref{thm:torsion_coproduct} allow us to describe part of the composite
$H^*(LM\times_M LM)\buildrel{H(q^!)}\over\rightarrow H^{*+m}(LM\times LM)\buildrel{Dlcop}\over\rightarrow H^{*+m}(LM\times LM)$
 in terms of the following composite of appropriate maps between torsion functors 
$$
{
\xymatrix@C30pt@R10pt{  
                       \text{Tor}^*_{C^*(M^{4})}(C^*(M), C^*(M^2))_{(wv)^*, {\Delta^2}^*}                                
                                          \ar[d]^{\text{Tor}_{1}(\Delta^!, 1)} \\
\text{Tor}^{*+m}_{C^*(M^{4})}(C^*(M^{2}), C^*(M^2))_{{\Delta^2}^*, {\Delta^2}^*} . 
\ar[d]^(0.45){\text{Tor}_{1}(\Delta^*, 1)}              \\ 
                 \text{Tor}^{*+m}_{C^*(M^{4})}(C^*(M), C^*(M^2))_{(wv)^*, {\Delta^2}^*} 
                        \ar[d]^{\text{Tor}_{1}(\Delta^!, 1)} \\
                       \text{Tor}^{*+2m}_{C^*(M^{4})}(C^*(M^2), C^*(M^2))_{{\gamma'}^*, {\Delta^2}^*}  
\ar[d]_(0.45){\text{Tor}_{1}(\Delta^*, 1)}                                  
\ar@/^7mm/[rd]^(0.7){\text{Tor}_{\alpha^*}(\Delta^*, \Delta^*)}_(0.7)\cong \\
\text{Tor}^{*+2m}_{C^*(M^{4})}(C^*(M), C^*(M^2))_{\Delta^*{\alpha}^*, {\Delta^2}^*}
\ar[r]_{\text{Tor}_{\alpha^*}(1, \Delta^*)}
& \text{Tor}^{*+2m}_{C^*(M^{2})}(C^*(M), C^*(M))_{\Delta^*, \Delta^*} . 
}
}
$$
By virtue of Lemma~\ref{isoextfreeloops}, we see that 
$\text{Ext}^{2m}_{C^*(M^4)}(C^*(M),C^*(M))_{(wv)^*,\Delta^*{\alpha}^*}$
is isomorphic to $H_{-m}(M^{S^1\vee S^1\vee S^1})=\{0\}$.
Then the composite
$C^*(M)\buildrel{\Delta^!}\over\rightarrow C^*(M\times M)\buildrel{\Delta^*}\over\rightarrow C^*(M)\buildrel{\Delta^!}\over\rightarrow C^*(M\times M)
\buildrel{\Delta^*}\over\rightarrow C^*(M)
$
is null in $\D(\text{Mod-}C^*(M^4))$. 
Therefore the composite $Dlcop\circ H(q^!)$ is trivial and hence 
$Dlcop\circ Dlp:=Dlcop\circ H(q^!)\circ comp^*$ is also trivial.
\end{proof}
\begin{rem}

Instead of using Lemma~\ref{isoextfreeloops}, one can show that
$$\text{Ext}^{2m}_{C^*(M^4)}(C^*(M),C^*(M))_{(wv)^*,\Delta^*{\alpha}^*}=\{0\}$$ as
follow:
Consider the cohomological Eilenberg-Moore spectral sequence with 
$${\mathbb E}^{p,*}_2\cong\text{Ext}^p_{H^*(M^4)}(H^*(M),H^*(M))$$ converging to 
$\text{Ext}^{*}_{C^*(M^4)}(C^*(M),C^*(M))_{(wv)^*,\Delta^*{\alpha}^*}$. Then we see that 
$${\mathbb E}^{p,*}_1=\text{Hom}(H^*(M)\otimes H^+(M^4)^{\otimes p}, H^*(M)).$$
Therefore, since $M^4$ is simply-connected and $H^{>m}(M)=\{0\}$,
${\mathbb E}^{p,q}_r=\{0\}$ if $q>m-2p$ (Compare with Remark~\ref{zeroelementsofEMSS}).
Therefore $\text{Ext}^{p+q}_{C^*(M^4)}(C^*(M),C^*(M))_{(wv)^*,\Delta^*{\alpha}^*}=\{0\}$ if $p+q>m$.
\end{rem}

\begin{rem}\label{rem:obstruction}
Let $M$ be a Gorenstein space of dimension $m$. The proof of Theorem
2.12 shows that if the composite 
$$\Delta^*\circ\Delta^!\circ
\Delta^*\circ\Delta^!\in\text{Ext}^{2m}_{C^*(M^4)}(C^*(M),C^*(M))_{(wv)^*,\Delta^*{\alpha}^*}$$
is the zero element. Then  $Dlcop\circ Dlp$ trivial.
\end{rem}

\begin{rem}
In the proof of Theorem \ref{thm:loop(co)product}, it is important to
work in the derived category of $C^*(M^4)$-modules:
Suppose that $M$ is the classifying space of a connected Lie group of
dimension $-m$. Then since $m$ is negative,
the composite $\Delta^!\circ \Delta^*$ is null. In fact $\Delta^!\circ \Delta^* \in
\text{Ext}^{m}_{C^*(M^2)}(C^*(M^2),C^*(M^2))_{1^*,1^*}\cong H^m(M^2)=\{0\}$.  
But in general, $Dlcop$ is not trivial; see \cite[Theorem D]{F-T} and \cite{K-L}. 
Therefore the composite $\Delta^*\circ\Delta^!\circ \Delta^*\in
\text{Ext}^{m}_{C^*(M^4)}(C^*(M^2),C^*(M))_{{\Delta^2}^*,\Delta^*{\alpha}^*}$
is also not trivial.
\end{rem}

\section{The generalized cup product on the Hochschild cohomology}

After recalling (defining) the (generalized) cup product on the Hochschild cohomology, we give an extension functor description 
of the product. The result plays an important role in proving our main theorem, Theorem \ref{thm:loop_homology_ss}. 

\begin{defn}\label{cup product Hochschild}
Let $A$ be a (differential graded) algebra.
Let $M$ be a $A$-bimodule.
Recall that we have a canonical
map~\cite[p. 283]{Menichi_BV_Hochschild}
$$
\otimes_A:HH^*(A,M)\otimes HH^*(A,M)\rightarrow HH^*(A,M\otimes_A M).
$$

{(1)} Let $\bar{\mu}_M:M\otimes_A M\rightarrow M$ be a morphism of
 $A$-bimodules of degree $d$.
Then the {\it cup product} $\cup$ on $HH^*(A,M)$ is the composite
$$
HH^p(A,M)\otimes HH^q(A,M)\buildrel{\otimes_A}\over\rightarrow
HH^{p+q}(A,M\otimes_A
M)\buildrel{HH^{p+q}(A,\bar{\mu}_M)}\over\longrightarrow HH^{p+q+d}(A,M).
$$

{(2)} Let $\varepsilon:Q\buildrel{\simeq}\over\rightarrow M\otimes_A M$
be a $A\otimes A^{op}$-projective (semi-free) resolution of $M\otimes_A M$.
Let $\bar{\mu}_M\in\text{Ext}_{A\otimes A^{op}}^d(M\otimes_A
M,M)=H^d(\text{Hom}_{A\otimes A^{op}}(Q,M))$.
Then the {\it generalized cup product} $\cup$ on $HH^*(A,M)$ is the composite
$$
HH^*(A,M)^{\otimes 2}\buildrel{\otimes_A}\over\rightarrow
HH^{*}(A,M\otimes_A M)
\buildrel{HH^{*}(A,\varepsilon)^{-1}}\over\longrightarrow HH^{*}(A,Q)
\buildrel{HH^{*}(A,\bar{\mu}_M)}\over\longrightarrow HH^{*}(A,M).
$$
\end{defn}
\begin{rem}\label{cup product of an algebra morphism}
Let $M$ be an associative (differential graded) algebra with unit
$1_M$.
Let $h:A\rightarrow M$ be a morphism of (differential graded)
algebras.
Then $$a\cdot m\star b:=h(a)mh(b)$$ defines an $A$-bimodule structure
on $M$
such that the multiplication of $M$, $\mu_M:M\otimes M\rightarrow M$
induces a morphism of $A$-bimodules $\bar{\mu}_M:M\otimes_A
M\rightarrow M$.

Conversely, let $M$ be a $A$-bimodule equipped with an element $1_M\in
M$ and a morphism of $A$-bimodules $\bar{\mu}_M:M\otimes_A
M\rightarrow M$ such that
$\bar{\mu}_M\circ(\bar{\mu}_M\otimes_A
1)=\bar{\mu}_M\circ(1\otimes_A\bar{\mu}_M)$ and such that the two maps
$m\mapsto \bar{\mu}_M(m\otimes_A 1)$ and $m\mapsto
\bar{\mu}_M(1\otimes_A m)$
coincide with the identity map on $M$.
Then the map $h:A\rightarrow M$ defined by $h(a):=a\cdot 1_M$ is
a morphism of algebras.
\end{rem}
The following lemma gives an interesting decomposition of the cup
product of the Hochschild cohomology of a commutative (possible
differential graded) algebra.

\begin{lem}\label{lem:generalizeddecompositioncup}
Let $A$ be a commutative (differential graded) algebra. Let $M$ be a
$A$-module. Let $B$ be an $A^{\otimes 2}$-module.
 Let $\mu:A^{\otimes 2}\rightarrow A$ 
denote the multiplication of $A$. Let $\eta:\K\rightarrow A$ be the unit of $A$.
Let $q:B\otimes B\twoheadrightarrow B\otimes_A B$ be the quotient map.
Then

{\em (1)}  $\text{\em Tor}^{1\otimes\mu\otimes 1}_*(1,\mu):\text{\em Tor}^{A^{\otimes 4}}_*(M,A\otimes A)\buildrel{\cong}\over\rightarrow \text{\em Tor}^{A^{\otimes 3}}_*(M,A)$
is an isomorphism,

{\em (2)} $\text{\em Hom}_{1\otimes\mu\otimes 1}(q,1):\text{\em Hom}_{A^{\otimes
    3}}(B\otimes_A
B,M)\buildrel{\cong}\over\rightarrow\text{\em Hom}_{A^{\otimes
    4}}(B\otimes B,M)$ is an  isomorphism and
 
{\em (3)} $\text{\em Ext}_{1\otimes\mu\otimes 1}^*(q,1): 
\text{\em Ext}_{A^{\otimes 3}}^*(B\otimes_A B,M)\buildrel{\cong}\over\rightarrow\text{\em Ext}_{A^{\otimes 4}}^*(B\otimes B,M)$ is also an  isomorphism.

{\em (4)} Let $\mu_M\in\text{Hom}_{A^{\otimes 4}}(M^{\otimes 2},M)$.
Then $\mu_M$ induced a quotient map
$\bar{\mu}_M:M\otimes_A M\rightarrow M$
and
the cup product $\cup$ of the Hochschild cohomology of $A$ with coefficients in $M$,
$HH^*(A,M)=\text{\em Ext}^*_{A^{\otimes 2}}(A,M)_{\mu,\mu}$ is given by the following
commutative diagram

$
\xymatrix@C12pt@R10pt{
\text{\em Ext}^*_{A^{\otimes 2}}(A,M)_{\mu,\mu}\otimes
\text{\em Ext}^*_{A^{\otimes 2}}(A,M)_{\mu,\mu}\ar[r]^-{\otimes}\ar[dd]_\cup
&\text{\em Ext}^*_{A^{\otimes 4}}(A^{\otimes 2},M^{\otimes 2})_{\mu^{\otimes 2},\mu^{\otimes 2}}
\ar[d]^{\text{\em Ext}^*_{1}(1,\mu_M)}\\
&\text{\em Ext}^*_{A^{\otimes 4}}(A^{\otimes 2},M)_{\mu^{\otimes 2},\mu\circ \mu^{\otimes 2}}
\ar[d]^{\text{\em Ext}^*_{1\otimes\mu\otimes 1}(\mu,1)^{-1}}_\cong\\
\text{\em Ext}^*_{A^{\otimes 2}}(A,M)_{\mu,\mu}&\text{\em Ext}^*_{A^{\otimes 3}}(A,M)_{\mu\circ(\mu\otimes 1),\mu\circ(\mu\otimes 1)}
\ar[l]^-{\text{\em Ext}^*_{1\otimes\eta\otimes 1}(1,1)}
}
$

{\em (5)} Let $\varepsilon:R\buildrel{\simeq}\over\rightarrow M\otimes M$
be a $A^{\otimes 4}$-projective (semi-free) resolution of $M\otimes M$.
Let $\mu_M\in\text{\em Ext}_{A^{\otimes 4}}(M^{\otimes
  2},M)=H(\text{\em Hom}_{A^{\otimes 4}}(R,M))$.
Let $\bar{\mu}_M$ be $\text{\em Ext}_{1\otimes\mu\otimes 1}^*(q,1)^{-1}(\mu_M)$. 
Then 
the generalized cup product $\cup$ of the Hochschild cohomology of $A$ with coefficients in $M$,
$HH^*(A,M)=\text{\em Ext}^*_{A^{\otimes 2}}(A,M)_{\mu,\mu}$ is given by the following
commutative diagram
$$
\xymatrix@C12pt@R10pt{
\text{\em Ext}^*_{A^{\otimes 2}}(A,M)_{\mu,\mu}\otimes
\text{\em Ext}^*_{A^{\otimes 2}}(A,M)_{\mu,\mu}\ar[r]^-{\otimes}\ar[ddd]_\cup
&\text{\em Ext}^*_{A^{\otimes 4}}(A^{\otimes 2},M^{\otimes 2})_{\mu^{\otimes 2},\mu^{\otimes 2}}
\ar[d]^{(\text{\em Ext}^*_{1}(1,\varepsilon))^{-1}}_\cong\\
&\text{\em Ext}^*_{A^{\otimes 4}}(A^{\otimes 2},R)
\ar[d]^{\text{\em Ext}^*_{1}(1,\mu_M)}\\
&\text{\em Ext}^*_{A^{\otimes 4}}(A^{\otimes 2},M)_{\mu^{\otimes 2},\mu\circ \mu^{\otimes 2}}
\ar[d]^{\text{\em Ext}^*_{1\otimes\mu\otimes 1}(\mu,1)^{-1}}_\cong\\
\text{\em Ext}^*_{A^{\otimes 2}}(A,M)_{\mu,\mu}&\text{\em Ext}^*_{A^{\otimes 3}}(A,M)_{\mu\circ(\mu\otimes 1),\mu\circ(\mu\otimes 1)}
\ar[l]^-{\text{\em Ext}^*_{1\otimes\eta\otimes 1}(1,1)}
}
$$
\end{lem}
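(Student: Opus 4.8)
The plan is to read all five parts as a single change-of-rings phenomenon along the algebra map $\phi = 1\otimes\mu\otimes 1\colon A^{\otimes 4}\to A^{\otimes 3}$, which collapses the two middle tensor factors through the multiplication $\mu$. The governing observation is that the derived extension of scalars $\phi_! = A^{\otimes 3}\otimes^{\mathbb L}_{A^{\otimes 4}}(-)$, left adjoint to restriction $\phi^*$, carries the relevant modules to their expected targets: $\phi_!(A\otimes A)\simeq A$ and $\phi_!(B\otimes B)\simeq B\otimes_A B$. The first of these is the technical heart. Here $A\otimes A$, viewed as a module over the sub-DGA $A_{(2)}\otimes A_{(3)}\subset A^{\otimes 4}$ of the two collapsed factors, is free of rank one (the factor $A_{(2)}$ acts through the regular representation on the left copy and $A_{(3)}$ on the right copy); consequently $\phi_!$ is \emph{underived} on $A\otimes A$, and a direct computation identifies $A^{\otimes 3}\otimes_{A^{\otimes 4}}(A\otimes A)$ with $A$, compatibly with $\mu$. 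For $B\otimes B$ one argues the same way after replacing $B$ by a semifree model over $A^{\otimes 2}$ (equivalently, reading $B\otimes_A B$ as the derived tensor product, as the derived-category framework of the paper already dictates), so that $\phi_!(B\otimes B)$ is a semifree resolution of $B\otimes_A B$ over $A^{\otimes 3}$.

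With these identifications in hand, the three homological statements follow formally. Part (2) is the restriction--extension adjunction $\text{Hom}_{A^{\otimes 3}}(\phi_!(B\otimes B),M)\cong\text{Hom}_{A^{\otimes 4}}(B\otimes B,\phi^*M)$, whose unit is precisely the quotient map $q$; hence the named map $\text{Hom}_{1\otimes\mu\otimes 1}(q,1)$ is this isomorphism. Part (3) is its derived refinement: apply $\text{Hom}_{A^{\otimes 4}}(-,M)$ to a semifree resolution $P\to B\otimes B$, use that $\phi_!P\to B\otimes_A B$ is again a resolution, and invoke the adjunction $\text{Hom}_{A^{\otimes 4}}(P,M)=\text{Hom}_{A^{\otimes 3}}(\phi_!P,M)$. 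Part (1) is the derived base-change isomorphism $M\otimes^{\mathbb L}_{A^{\otimes 4}}(A\otimes A)\simeq M\otimes^{\mathbb L}_{A^{\otimes 3}}\phi_!(A\otimes A)\simeq M\otimes^{\mathbb L}_{A^{\otimes 3}}A$, valid because $M$ is already an $A^{\otimes 3}$-module; one then checks that this natural isomorphism is realized by the concrete map $\text{Tor}_{1\otimes\mu\otimes 1}(1,\mu)$.

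Parts (4) and (5) are obtained by unwinding Definition~\ref{cup product Hochschild}. The plan is to factor the canonical map $\otimes_A\colon HH^*(A,M)^{\otimes 2}\to HH^*(A,M\otimes_A M)$ as the external $\vee$-product into $\text{Ext}_{A^{\otimes 4}}(A^{\otimes 2},M^{\otimes 2})$ followed by the change-of-rings isomorphisms of (2)--(3) specialized to $B=A$ (where $q=\mu$ and $B\otimes_A B=A$), and to identify $HH^*(A,\bar\mu_M)$ with $\text{Ext}_1(1,\mu_M)$ — in (5) preceded by $\text{Ext}_1(1,\varepsilon)^{-1}$, which accounts for a resolution $R\to M\otimes M$ of $\mu_M$ viewed as an Ext class. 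The final arrow $\text{Ext}_{1\otimes\eta\otimes 1}(1,1)$ is the standard normalization isomorphism deleting the redundant unit factor and returning $\text{Ext}_{A^{\otimes 3}}(A,M)$ to $\text{Ext}_{A^{\otimes 2}}(A,M)=HH^*(A,M)$. Tracing these definitions through compatible resolutions then yields the two asserted commutative diagrams.

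The hard part will be sign-bookkeeping in the differential graded setting rather than anything conceptual. First, one must check that the abstract adjunction and base-change isomorphisms coincide \emph{on the nose} with the concretely named maps $\text{Tor}_{1\otimes\mu\otimes 1}(1,\mu)$, $\text{Hom}_{1\otimes\mu\otimes 1}(q,1)$ and $\text{Ext}_{1\otimes\mu\otimes 1}(q,1)$; this forces a careful choice of compatible semifree resolutions and verification that the Koszul signs survive. Second, in (4) and (5) the identification of the Hochschild product map $\otimes_A$ with the $\vee$-product relies on the Eilenberg--Zilber/Künneth comparison, whose signs must be matched with those fixed in Remark~\ref{definition generalized V-product}. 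This second matching is the delicate step, and is exactly where the paper's stated attention to signs is indispensable.
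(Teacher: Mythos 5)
Your proposal is correct and takes essentially the same route as the paper: the paper realizes your change-of-rings argument concretely via the bar resolution (the quotient $B(A,A,A)\otimes B(A,A,A)\twoheadrightarrow B(A,A,A)\otimes_A B(A,A,A)$ is an isomorphism of underlying graded vector spaces, which is exactly your ``freeness over the two collapsed factors'' observation), reduces (2) to the universal property of $q$, reduces (3) to (2) by resolving $B$, and proves (4)--(5) by unwinding the cup product $\bar{\mu}_M\circ(f\otimes_A g)\circ c$ through compatible resolutions and lifting $\mu_M$ to $\mu_P$. The only cosmetic discrepancy is your appeal to Eilenberg--Zilber in (4)--(5): the lemma is purely algebraic, so the external product needed is just the Cartan--Eilenberg $\otimes$-map on Hom complexes and no topological comparison enters.
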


As mentioned at the beginning of this section, Lemma \ref{lem:generalizeddecompositioncup} (4) contributes toward proving 
Theorem \ref{thm:loop_homology_ss}. Moreover in view of part (5) of the lemma, 
we prove Theorem \ref{rational  iso of Felix-Thomas Gorenstein}.

\begin{proof}[Proof of Lemma \ref{lem:generalizeddecompositioncup}]
(1) Consider the bar resolution $\xi: B(A, A, A) \stackrel{\simeq}{\to} A$ of $A$. 
Since the complex $B(A, A, A)$ is a semifree $A$-module, it follows from 
\cite[Theorem 6.1]{F-H-T} that 
$\xi\otimes_A \xi : B(A, A, A)\otimes_A B(A, A, A) \to A\otimes_A A=A$ is a quasi-isomorphism and hence it is a projective resolution of $A$ as a $A^{\otimes 3}$-module. 
We moreover have a commutative diagram 
$$
\xymatrix@C15pt@R15pt{
B(A, A, A)\otimes B(A, A, A) \ar[r]^(0.7){\xi\otimes \xi} \ar@{->>}[d]_q & A \otimes A \ar[d]^\mu \\
B(A, A, A)\otimes_A B(A, A, A) \ar[r]_(0.75){\xi\otimes_A \xi} & A
}
$$
in which $q$ is the natural projection and the first row is a projective resolution of $A\otimes A$ as a 
$A^{\otimes 4}$-module.
It is immediate that
$q$ is a morphism of $A^{\otimes 4}$-modules with respect to the
morphism of algebras $1\otimes \mu \otimes 1:A^{\otimes 4}\rightarrow A^{\otimes 3}$.
Then $\text{Tor}_{1\otimes \mu \otimes 1}(1, \mu)$ is induced by the map
$$
1\otimes q : M\otimes_{A^{\otimes 4}}B(A, A, A)\otimes B(A, A, A) \to 
M\otimes_{A^{\otimes 3}}B(A, A, A)\otimes_A B(A, A, A). 
$$
Since $A$ is a commutative, it follows that both the source and target of $1\otimes u$ are isomorphic to $W:= M\otimes B(\K, A, \K) \otimes B(\K, A, \K)$ as a vector space. 
As a linear map, $1\otimes q$ coincides with the identity map on $W$ up to isomorphism.

(2) By the universal property of the quotient map $q:B\otimes B\twoheadrightarrow
B\otimes_A B$, $\text{Hom}_{1\otimes\mu\otimes 1}(q,1)$ is an isomorphism.

(3) Let $\varepsilon:P\buildrel{\simeq}\over\rightarrow B$ be an $A^{\otimes
  2}$-projective
(semifree) resolution of $B$.
We have a commutative square of $A^{\otimes 4}$-modules
$$
\xymatrix@C20pt@R15pt{
P\otimes P\ar[r]^{\varepsilon\otimes\varepsilon}_\simeq\ar[d]_{q'}
&B\otimes B \ar[d]^q\\
P\otimes_A P\ar[r]^{\varepsilon\otimes_A\varepsilon}_\simeq
&B\otimes_A B
}
$$
Therefore $\text{Ext}_{1\otimes\mu\otimes 1}^*(q,1)$ is induced
by $\text{Hom}_{1\otimes\mu\otimes 1}(q',1)$ which is an isomorphism
by (2).

(4) Let $A$ be any algebra and $M$ be any $A$-bimodule.
Let $\xi:\mathbb{B}\buildrel{\simeq}\over\rightarrow A$ an $A\otimes A^{op}$-projective
(semi-free) resolution (for example the double bar resolution).
Let $c:\mathbb{B}\rightarrow \mathbb{B}\otimes_A\mathbb{B}$ be a morphism of $A$-bimodules
such that the diagram of $A$-bimodules
$$
\xymatrix@C20pt@R15pt{
\mathbb{B}\ar[r]^\xi\ar[d]^c
&A \ar[d]_\cong\\
\mathbb{B}\otimes_A\mathbb{B}\ar[r]_{\xi\otimes_A\xi}
&A\otimes_A A
}
$$
is homotopy commutative.
The cup product of $f$ and $g\in\text{Hom}_{A\otimes A^{op}}(\mathbb{B},M)$ is the composite
$\bar{\mu}_M\circ (f\otimes_A g)\circ c\in \text{Hom}_{A\otimes A^{op}}(\mathbb{B},M)$~\cite[p. 134]{S-G}.

Suppose now that $A$ is commutative and that the $A$-bimodule
structure on $M$ comes from the multiplication $\mu$ of $A$ and an $A$-module structure on $M$.
The following diagram of complexes gives two different decompositions
of the cup product on $\text{Hom}_{A\otimes A^{op}}(\mathbb{B},M)$.
$$
{
\xymatrix@C20pt@R10pt{
\text{Hom}_{A^{\otimes 2}}(\mathbb{B},M)\otimes \text{Hom}_{A^{\otimes 2}}(\mathbb{B},M)
\ar[r]^-{\otimes}\ar[d]^{\otimes_A}
& \text{Hom}_{A^{\otimes 4}}(\mathbb{B}\otimes\mathbb{B},M\otimes M)
\ar[d]^{\text{Hom}_{1}(1,\mu_M)}\\
\text{Hom}_{A^{\otimes 3}}(\mathbb{B}\otimes_A\mathbb{B},M\otimes_A M)
\ar[dr]^{\text{Hom}_{1}(1,\bar{\mu}_M)}
&\text{Hom}_{A^{\otimes 4}}(\mathbb{B}\otimes\mathbb{B},M)\\
\text{Hom}_{A\otimes A^{op}}(\mathbb{B}\otimes_A\mathbb{B},M)
\ar[d]^{\text{Hom}_{1}(c,1)}
&\text{Hom}_{A^{\otimes 3}}(\mathbb{B}\otimes_A\mathbb{B},M)
\ar[u]_{\text{Hom}_{1\otimes\mu\otimes 1}(q,1)}^\cong
\ar[l]^{\text{Hom}_{1\otimes\eta\otimes 1}(1,1)}\\
\text{Hom}_{A\otimes A^{op}}(\mathbb{B},M)
}
}
$$
(5) Let $\varepsilon:P\buildrel{\simeq}\over\rightarrow M$
be a surjective $A\otimes A^{op}$-projective (semifree) resolution
of $M$.
Tnen $\mu_M$ can be considered as an element of
$\text{Hom}_{A^{\otimes 4}}(P\otimes P, M)$.
By lifting, there exists $\mu_P\in\text{Hom}_{A^{\otimes 4}}(P\otimes
P, P)$
such that $\varepsilon\circ\mu_P=\mu_M$.
By 2), there exists $\bar{\mu}_P$ such that $\bar{\mu}_P\circ
q=\mu_P$.
We can take $\bar{\mu}_M=\varepsilon\circ\bar{\mu}_P$.

It is now easy to check that the isomorphism
$HH^*(A,\varepsilon):HH^*(A,P)\buildrel{\cong}\over\rightarrow
HH^*(A,M)$ 
transports the cup product on $HH^*(A,P)$ defined using $\bar{\mu}_P$
to the generalized cup product on $HH^*(A,M)$ defined using
$\bar{\mu}_M$.
We now check that the isomorphism $HH^*(A,\varepsilon):HH^*(A,P)\buildrel{\cong}\over\rightarrow
HH^*(A,M)$ transports the composite
$$
\xymatrix@C15pt@R10pt{
\text{Ext}^*_{A^{\otimes 2}}(A,P)_{\mu,\mu}\otimes
\text{Ext}^*_{A^{\otimes 2}}(A,P)_{\mu,\mu}\ar[r]^-{\otimes}
&\text{Ext}^*_{A^{\otimes 4}}(A^{\otimes 2},P^{\otimes 2})_{\mu^{\otimes 2},\mu^{\otimes 2}}
\ar[d]^{\text{Ext}^*_{1}(1,\mu_P)}\\
&\text{Ext}^*_{A^{\otimes 4}}(A^{\otimes 2},P)_{\mu^{\otimes 2},\mu\circ \mu^{\otimes 2}}
\ar[d]^{\text{Ext}^*_{1\otimes\mu\otimes 1}(\mu,1)^{-1}}_\cong\\
\text{Ext}^*_{A^{\otimes 2}}(A,P)_{\mu,\mu}&\text{Ext}^*_{A^{\otimes 3}}(A,P)_{\mu\circ(\mu\otimes 1),\mu\circ(\mu\otimes 1)}
\ar[l]^-{\text{Ext}^*_{1\otimes\eta\otimes 1}(1,1)}
}
$$
into the composite
$$
\xymatrix@C15pt@R10pt{
\text{Ext}^*_{A^{\otimes 2}}(A,M)_{\mu,\mu}\otimes
\text{Ext}^*_{A^{\otimes 2}}(A,M)_{\mu,\mu}\ar[r]^-{\otimes}
&\text{Ext}^*_{A^{\otimes 4}}(A^{\otimes 2},M^{\otimes 2})_{\mu^{\otimes 2},\mu^{\otimes 2}}
\ar[d]^{(\text{Ext}^*_{1}(1,\varepsilon\otimes\varepsilon))^{-1}}_\cong\\
&\text{Ext}^*_{A^{\otimes 4}}(A^{\otimes 2},P\otimes P)
\ar[d]^{\text{Ext}^*_{1}(1,\mu_M)}\\
&\text{Ext}^*_{A^{\otimes 4}}(A^{\otimes 2},M)_{\mu^{\otimes 2},\mu\circ \mu^{\otimes 2}}
\ar[d]^{\text{Ext}^*_{1\otimes\mu\otimes 1}(\mu,1)^{-1}}_\cong\\
\text{Ext}^*_{A^{\otimes 2}}(A,M)_{\mu,\mu}&\text{Ext}^*_{A^{\otimes 3}}(A,M)_{\mu\circ(\mu\otimes 1),\mu\circ(\mu\otimes 1)}
\ar[l]^-{\text{Ext}^*_{1\otimes\eta\otimes 1}(1,1)}
}
$$
By applying (4) to $\mu_P$, we have proved (5).
\end{proof}
\begin{thm}(Compare with~\cite[Theorem 12]{F-T})\label{ext Gorenstein commutatif}
Let $B$ be a simply-connected commutative Gorenstein cochain algebra
of dimension $m$ such that $\forall i\in\mathbb{N}$, $H^i(B)$ is
finite dimensional.
Then
$
\text{\em Ext}^{*+m}_{B\otimes B}(B,B\otimes B)\cong H^*(B).
$
\end{thm}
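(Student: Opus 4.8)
The plan is to compute $\text{Ext}_{B\otimes B}(B,B\otimes B)$ by a change-of-rings argument along the inclusion of the left tensor factor, and then to feed in the Gorenstein hypothesis in the guise of a self-duality for $B$. Throughout, $B$ is regarded as a left $B\otimes B$-module via the multiplication $\mu\colon B\otimes B\to B$, which is a morphism of algebras precisely because $B$ is commutative; this is the algebraic counterpart of the diagonal module $C^*(X)$ over $C^*(X^2)$, so the statement is the abstract analogue of Theorem~\ref{thm:ext} but proved independently. Write $i\colon B\to B\otimes B$, $b\mapsto b\otimes 1$, for the left-factor inclusion. Since $B\otimes B=B\otimes_{\K}B$ is free, hence semifree, as a left $B$-module via $i$, the coinduction functor $\text{Hom}_B(B\otimes B,-)$ is an exact right adjoint to restriction along $i$. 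First I would record the resulting adjunction isomorphism: for every left $B$-module $N$,
$$\text{Ext}^*_{B\otimes B}\bigl(B,\text{Hom}_B(B\otimes B,N)\bigr)\cong\text{Ext}^*_{B}(\text{res}_i B,N),$$
and observe that $\text{res}_i B$ is simply $B$ with its free rank-one module structure, because $(b\otimes 1)\cdot x=bx$ under $\mu$.

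Taking $N=B$, the right-hand side becomes $\text{Ext}^*_{B}(B,B)=H^*(B)$, as $B$ is free over itself. It then remains only to identify the coefficient module $\text{Hom}_B(B\otimes B,B)$ occurring on the left and to see that it is, up to a shift, a copy of $B\otimes B$. Using that $B\otimes B$ is $B$-free on the right factor and that $B$ is commutative, a direct computation identifies $\text{Hom}_B(B\otimes B,B)$ with $B\otimes B^\vee$ as a $B\otimes B$-module, where the left factor acts on $B$ by multiplication and the right factor acts on $B^\vee=\text{Hom}_{\K}(B,\K)$ by the transpose of right multiplication; commutativity is exactly what disentangles the two factor-actions in this way. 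Equivalently one may phrase the whole computation as $HH^*(B;B\otimes B)$ and recognise the statement as an instance of Poincar\'e--Gorenstein (Van den Bergh) duality, but the coinduction route above is the most self-contained.

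The \emph{main obstacle}, and the only place the Gorenstein hypothesis is used, is to show that $B^\vee$ is quasi-isomorphic to $B$ up to a shift by $m$ as a DG $B$-module. I would deduce this from the defining condition $\text{Ext}^*_B(\K,B)\cong\K$, concentrated in degree $m$, as follows: over the field $\K$ and under the stated finite-type hypothesis one has $\K\otimes_B^{\mathbb{L}}B^\vee\cong(\text{Ext}^*_B(\K,B))^\vee$, which is therefore one-dimensional and concentrated in a single degree. Because $B$ is simply connected with cohomology of finite type, a $B$-module whose derived fibre $\K\otimes_B^{\mathbb{L}}(-)$ is one-dimensional is, by a minimal-model (Nakayama) argument, quasi-isomorphic to a shifted free module of rank one; hence $B^\vee$ is quasi-isomorphic to $B$ up to the shift by $m$. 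This step is precisely the passage from the numerical Gorenstein condition to module-level self-duality, and it is where simple-connectivity and the finiteness of each $H^i(B)$ are genuinely needed. Granting it, $\text{Hom}_B(B\otimes B,B)\simeq B\otimes B^\vee$ is quasi-isomorphic to $B\otimes B$ shifted by $m$ in $\D(\text{Mod-}B\otimes B)$, and substituting this into the change-of-rings isomorphism of the first paragraph gives $H^*(B)\cong\text{Ext}^{*+m}_{B\otimes B}(B,B\otimes B)$. The direction of the shift is fixed by matching the degree in which $\text{Ext}_B(\K,B)$ is concentrated, reproducing the normalization of Theorem~\ref{thm:ext}.
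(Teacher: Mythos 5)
There is a genuine gap, and it sits exactly where you locate the ``main obstacle'': the claim that the Gorenstein condition forces $B^\vee$ to be quasi-isomorphic to a shift $\Sigma^{-m}B$ of the free module. That self-duality characterizes Poincar\'e duality algebras (those with $\dim H^*(B)<\infty$), not Gorenstein ones, and the theorem is stated --- and used in the paper, e.g.\ for algebras modelling $C^*(BG)$ --- precisely in the wider generality. Take $B=\K[x]$ with $|x|=2$ (a model of $C^*(BS^1)$): it satisfies all the hypotheses and is Gorenstein of negative dimension, but $B^\vee$ is the divisible ``Pr\"ufer-type'' module concentrated in degrees $\le 0$, on which $x$ acts surjectively with one-dimensional kernel rather than injectively, so it is not quasi-isomorphic to any shift of $B$. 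Your Nakayama step is where this breaks: it is true that $\K\otimes_B^{\mathbb{L}}B^\vee$ is one-dimensional, but the implication ``one-dimensional derived fibre $\Rightarrow$ shifted free of rank one'' needs the module to be bounded below so that the inductive construction of a minimal semifree model converges, and $B^\vee$ is unbounded below whenever $H^*(B)$ is infinite-dimensional. A symptom of the same disease appears one step earlier: $\text{Hom}_B(B\otimes B,B)\cong\text{Hom}_\K(B,B)$ is a product over degrees while $B\otimes B^\vee$ is a direct sum, and these have different cohomology for such $B$. The coinduction adjunction itself is fine, and your whole argument is correct when $\dim H^*(B)<\infty$ --- but in that case you have proved the Poincar\'e duality case, not the Gorenstein statement.

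For comparison, the paper gives no independent argument at all: its proof is the one-line observation that the proof of \cite[Theorem 12]{F-T}, written there for the strongly homotopy commutative algebra $C^*(X)$, goes through verbatim for a genuinely commutative $B$. That argument works with a semifree model of $B$ over $B\otimes B$ and feeds in the Gorenstein hypothesis only through $\text{Ext}^*_B(\K,B)$; it never requires $B^\vee\simeq\Sigma^{-m}B$, which is why it survives examples like $\K[x]$. If you want a self-contained proof in your style, the self-duality step has to be replaced by an argument that does not assume $H^*(B)$ is finite-dimensional.
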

\begin{proof}
The proof of~\cite[Theorem 12]{F-T} for the strongly homotopy
commutative algebra $C^*(X)$ obviously works in the case of a
commutative
algebra $B$.
\end{proof}
\begin{rem}
In~\cite[Theorem 2.1 i) iv)]{A-I}
Avramov and Iyengar have shown a related result in the non graded case:
Let $S$ be a a commutative algebra over a field $\K$, which is the quotient of a polynomial
algebra $\K[x_1,\dots,x_d]$ or more generally which is the quotient of a localization
of $\K[x_1,\dots,x_d]$. Then $S$ is Gorenstein if and only if the graded $S$-module
$
\text{Ext}^{*}_{S\otimes S}(S,S\otimes S)
$
is projective of rank $1$.
\end{rem}
\begin{ex}\label{generalized cup product Gorenstein}(The generalized cup product of a Gorenstein algebra)
Let $A\rightarrow B$ be a morphism of commutative differential graded
algebras where $B$ satisfies the hypotheses of
Theorem~\ref{ext Gorenstein commutatif}.
Let $\Delta_B:B\rightarrow B\otimes B$ be a generator of
$\text{Ext}^{m}_{B\otimes B}(B,B\otimes B)\cong \K$.
By taking duals, we obtain the following element of
$
\text{Ext}^{m}_{A\otimes A}(B^\vee\otimes B^\vee,B^\vee)
$:
$$
(\Delta_B)^{\vee}:B^\vee\otimes B^\vee\rightarrow (B\otimes B)^\vee
\rightarrow B^\vee.
$$
By 3) of Lemma~\ref{lem:generalizeddecompositioncup},
$
(\Delta_B)^{\vee}$ induces an element
$\bar{\mu}_{B^{\vee}}\in\text{Ext}^{m}_{A\otimes A}(B^\vee\otimes_A
B^\vee,B^\vee)$.
Therefore, by ii) of definition~\ref{cup product Hochschild},
we have a generalized cup product
$$
HH^p(A,B^\vee)\otimes HH^q(A,B^\vee)
\buildrel{\cup}\over\rightarrow HH^{p+q+m}(A,B^\vee).
$$
In the case $A=B$, of course, we believe that
$HH^*(A,A^\vee)$ equipped with this generalized cup product
and Connes coboundary is a non-unital BV-algebra.
\end{ex}

\section{Proofs of Theorems  \ref{thm:loop_homology_ss},  \ref{thm:freeloopExt} and \ref{rational
    iso of Felix-Thomas Gorenstein} and Corollary~\ref{rational
    iso of Felix-Thomas Poincare}}  

We first prove Theorem \ref{rational iso of Felix-Thomas Gorenstein}. The same argument is applicable when proving 
Theorem \ref{thm:loop_homology_ss}. 

\begin{proof}[Proof of Theorem \ref{rational iso of Felix-Thomas Gorenstein}]
Step 1:
The polynomial differential functor $A(X)$ extends to a functor
$A(X,Y)$ for pairs of spaces $Y\subset X$.
The two natural short exact sequences~\cite[p. 124]{F-H-T}
$\rightarrow A(X,Y)\rightarrow A(X)\rightarrow A(Y)\rightarrow 0$
and $0\rightarrow C^*(X,Y)\rightarrow C^*(X)\rightarrow
C^*(Y)\rightarrow 0$
are naturally weakly equivalent~\cite[p. 127-8]{F-H-T}.
Therefore all the results of Felix and Thomas given in~\cite{F-T}
with the singular cochains algebra $C^*(X)$ are valid
with $A(X)$ (For example, the description of the shriek map of an
embedding $N\hookrightarrow M$ at the level of singular cochains
given page 419 of~\cite{F-T}).
In particular, our Theorem~\ref{thm:torsion_product}
is valid when we replace $C^*(X)$ by $A(X)$.
(Note also that a proof similar to the proof of Theorems~\ref{thm:newThm3} or~\ref{produitshriek} shows that the dual of the loop product on $A(L_NM)$ is isomorphic
to the dual of the loop product defined on $C^*(L_NM)$.)
This means the following:
Let $\Delta^!_A$ be a generator of
$\text{Ext}_{A(N^2)}^n(A(N),A(N^2))\cong \Q$ given by \cite[Theorem 12]{F-T}. Then the composite $\text{Tor}^1(1,\sigma^*)\circ EM^{-1}$
is an isomorphism of algebras between the dual of the loop product
$Dlp$
on $H^*(A(L_NM))$ and the coproduct defined by the composite on the left
column of the following diagram.

Step 2: We have chosen $\Delta^!_A$ and $\Delta_{A(N)}$ such that the composite
$\varphi\circ\Delta_{A(N)}$ is equal to $\Delta^!_A$ in the derived category of
$A(N)^{\otimes 2}$-modules. Therefore the following diagram commutes.
$$
\xymatrix@C50pt@R15pt{
\text{Tor}^{A(M^{2})}_{*}(A(N), A(M))\ar[d]_{\text{Tor}^{A(p_{13})}(1,1)}
&\text{Tor}^{A(M)^{\otimes 2}}_{*}(A(N), A(M))\ar[l]_-{\text{Tor}^{\varphi}(1,1)}^-\cong
\ar[d]_{\text{Tor}^{1\otimes\eta\otimes1}(1,1)}\\
\text{Tor}^{A(M^{3})}_{*}(A(N), A(M))
&\text{Tor}^{A(M)^{\otimes 3}}_{*}(A(N), A(M))\ar[l]_-{\text{Tor}^{\varphi\circ(1\otimes\varphi)}(1,1)}^-\cong\\
\text{Tor}^{A(M^{4})}_{*}(A(N), A(M^2))\ar[u]^{\text{Tor}^{A(1\times\Delta\times 1)}(1,A(\Delta))}_\cong
\ar[d]_{\text{Tor}^{1}(\Delta^!_A,1)}
&\text{Tor}^{A(M)^{\otimes 4}}_{*}(A(N), A(M)^{\otimes 2})\ar[l]_-{\text{Tor}^{\varphi}(1,\varphi)}^-\cong
\ar[u]^{\text{Tor}^{1\otimes\mu\otimes1}(1,\mu)}_\cong
\ar[d]_{\text{Tor}^{1}(\Delta_{A(N)},1)}\\
\text{Tor}^{A(M^{4})}_{*}(A(N^2), A(M^2))\ar[d]_\cong
&\text{Tor}^{A(M)^{\otimes 4}}_{*}(A(N)^{\otimes 2}, A(M)^{\otimes 2})\ar[l]_-{\text{Tor}^{\varphi}(\varphi,\varphi)}^-\cong
\ar[d]_\cong\\
\left(\text{Tor}^{A(M^{2})}_{*}(A(N), A(M))\right)^{\otimes 2}
&\left(\text{Tor}^{A(M)^{\otimes 2}}_{*}(A(N), A(M))\right)^{\otimes 2}\ar[l]^-{\left(\text{Tor}^{\varphi}(1,1)\right)^{\otimes 2}}_-\cong
}
$$
Step 3:
Dualizing and using the natural isomorphism
$$
\text{Ext}^*_B(Q,P^\vee)\buildrel{\cong}\over\rightarrow \text{Tor}^*_B(P,Q)^\vee
$$
for any differential graded algebra $B$, right $B$-module $P$ and left $B$-module $Q$,
we see that the dual of $\Phi$ is an isomorphism of algebras with respect to the loop product and to the long composite
given by the diagram of (5) of Lemma~\ref{lem:generalizeddecompositioncup} when $A:=A(M)$, $M:=A(N)^\vee$ and $\mu_M:=(\Delta_{A(N)})^\vee$.

Step 4: We apply part (5) of Lemma~\ref{lem:generalizeddecompositioncup} to see that this long composite coincides with 
the generalized cup product of the Gorenstein algebra $B:=A(N)$.  
\end{proof}

\noindent
{\it Proof of Theorem  \ref{thm:loop_homology_ss}.}  
Let $\{E_r^{*,*}, d_r\}$ denote the spectral sequence described in Theorem \ref{thm:EMSS}.
Then we define a spectral sequence $\{{\mathbb E}_r^{*,*}, d_r^\vee\}$ by 
$$
{\mathbb E}_r^{p,q+d}:= ({E_r^{*,*}}^\vee)^{p,q} = (E_r^{-p,-q})^\vee. 
$$ 
The decreasing filtration $\{F^pH^*(L_N M) \}_{p\leq 0}$ of $H^*(L_N M)$ induces the decreasing filtration $\{F^pH_*(L_N M) \}_{p\geq 0}$ of $H_*(L_N M)$ defined by 
$$F^pH_*(L_N M)= (H^*(L_N M)/F^{-p}H^*(L_N M))^\vee.$$ 
By definition, the Chas-Sullivan loop homology (the shift homology) ${\mathbb H}_*(L_N M)$ is given by 
${\mathbb H}_{-(p+q)}(L_N M)=(H^*(L_N M)^\vee)^{p+q-d}$. By Proposition~\ref{prop:loop_homology},
the product $m$ on ${\mathbb H}_*(L_N M)$ is defined by 
$$
m(a\otimes b)= (-1)^{d(|a|-d)}(Dlp)^\vee(a\otimes b)
$$
for $a\otimes b \in (H^*(L_N M)^\vee)^{*}\otimes (H^*(L_N M)^\vee)^{*}$.  
Then we see that 
\begin{eqnarray*}
{\mathbb E}_{\infty}^{p, q}  &\cong& F^p(H^*(L_N M)^\vee)^{p+q-d}/ F^{p+1}(H^*(L_N M)^\vee)^{p+q-d} \\ 
&=&F^p{\mathbb H}_{-(p+q)}(L_N M)/F^{p+1}{\mathbb H}_{-(p+q)}(L_N M).
\end{eqnarray*}

The composite  $\widetilde{(Dlp)}$ in 
Theorem \ref{thm:torsion_product} which gives rise to $Dlp$ on
$H^*(L_N M)$ preserves the filtration of the EMSS $\{E^{*,*}_r, d_r\}$; see Remark \ref{rem:relative_cases}. 
As mentioned in the proof of Theorem \ref{thm:EMSS}, 
the map  $\widetilde{(Dlp)}$ induces the morphism  $(Dlp)_r : E_r^{*, *} \to E_r^{*, *}\otimes E_r^{*, *}$  
of spectral sequences of bidegree $(0, d)$. 
Define $m_r  : {\mathbb E}_r^{*, *} \otimes {\mathbb E}_r^{*, *}\to {\mathbb E}_r^{*, *}$ by 
$$
m_r(a\otimes b)=(-1)^{d(|a|+d)}((Dlp)_r)^\vee(a\otimes b), 
$$  
where $|a| = p+ q$ if $a \in {({E_r^{*,*}}^\vee)}^{p, q}$. Then a straightforward computation enables us to deduce that
$m_r(d_r^\vee a\otimes b+(-1)^{\vert a\vert+d}a \otimes d_r^\vee b) = d_r^\vee\circ m_r(a\otimes b)$
for any $r$. Note that $\vert a\vert+d=p+q+d$ is the total degree of $a$ in ${\mathbb E}_r^{*, *}$.
It turns out that 
$\{{\mathbb E}_r, d_r^\vee\}$ is a spectral sequence of algebras converging to ${\mathbb H}_{-*}(L_N M)$ as an algebra. 

%

It remains now to identify the $\mathbb{E}_2$-term with 
the Hochschild cohomology. We proceed as in the proof of Theorem~\ref{rational iso
  of Felix-Thomas Gorenstein} replacing the polynomial differential functor $A$
  by singular cohomology $H^*$.
The product $Dlp_2$ is given by the composite 
$$
\xymatrix@C50pt@R15pt{
\text{Tor}^{H^*(M^{2})}_{*}(H^*(N), H^*(M))\ar[r]_{\text{Tor}^{H^*(p_{13})}(1,1)}&
\text{Tor}^{H^*(M^{3})}_{*}(H^*(N), H^*(M))\\
&\text{Tor}^{H^*(M^{4})}_{*}(H^*(N), H^*(M^2))\ar[u]_{\text{Tor}^{H^*(1\times\Delta\times 1)}(1,H^*(\Delta))}^\cong
\ar[d]_{\text{Tor}^{1}(\Delta^!,1)}\\
\left(\text{Tor}^{H^*(M^{2})}_{*}(H^*(N), H^*(M))\right)^{\otimes 2}
&\text{Tor}^{H^*(M^{4})}_{*}(H^*(N^2), H^*(M^2))\ar[l]_\cong
}
$$
Dualizing and using the natural isomorphism
$$
\text{Ext}^*_B(Q,P^\vee)\buildrel{\cong}\over\rightarrow \text{Tor}^*_B(P,Q)^\vee
$$
for any graded algebra $B$, right $B$-module $P$ and left $B$-module $Q$,
we see that $Dlp_2^\vee$, the dual of $Dlp_2$ is the long composite
given by the diagram of Lemma~\ref{lem:generalizeddecompositioncup} (4) when $A:=H^*(M)$, $M:=H_*(N)$ and $\mu_M:=H(\Delta^!)^\vee$.
By Lemma~\ref{lem:generalizeddecompositioncup} (4), we obtain
an isomorphism of algebras 
$
u : HH^*(H^*(M),H_*(N)) \stackrel{\cong}{\to} (E^{*,*}_2)^\vee
$
with respect to $Dlp_2^\vee$ and the cup product induced by
$$
\bar{\mu}_M=\overline{H(\Delta^{!})^\vee}:H_*(N)\otimes_{H^{*}(M)}H_*(N)\rightarrow H_*(N).
$$
Using Example~\ref{lem:important_degree} (ii) and Example~\ref{examples naturality cup product} (i), we finally obtain
an isomorphism of algebras
$\mathbb{E}^{*,*}_2\cong HH^*(H^*(M),\mathbb{H}_*(N))$
with respect to $m_2$ and the cup product induced by
$$
\bar{\mu}_\mathbb{M}:\mathbb{H}_*(N)\otimes_{H^{*}(M)}\mathbb{H}_*(N)\rightarrow \mathbb{H}_*(N).
$$
Note that $\bar{\mu}_\mathbb{M}$ coincides with $\overline{H(\Delta^{!})^\vee}$ only up to the multiplication by $(-1)^d$. 

Suppose further that $N$ is a Poincar\'e duality space of dimension
$d$.
Consider the two squares
$$
\xymatrix@C30pt@R15pt{
H_{d-p}(N)\otimes H_{d-q}(N)\ar[r]^-\times
& H_{2d-p-q}(N\times N)\ar[r]^-{H(\Delta^!)^\vee}
& H_{d-p-q}(N)\\
H^p(N)\otimes H^q(N)\ar[r]^-\times\ar[u]^{-\cap [N] \otimes -\cap [N]}
& H^{p+q}(N\times N)\ar[r]^-{H(\Delta)}\ar[u]_{- \cap [N] \times [N]}
& H^{p+q}(N)\ar[u]_{-\cap [N]}
}
$$
The right square is the diagram 2) of Proposition~\ref{shriek and Poincare duality}. 
Therefore the right square commutes by
Corollary~\ref{shriek diagonal}.
By~\cite[VI.5.4 Theorem]{Bredongeometry}, we see that 
$$(\alpha\times\beta)\cap ([N] \times[N])=
(-1)^{\vert\alpha\vert\vert [N]\vert}
  (\alpha\cap[N])\times (\beta\cap[N])=\times\circ(-\cap
  [N])\otimes(-\cap [N])(\alpha\otimes \beta).
$$
This means that the left square commutes.
Therefore we have proved that the isomorphism of lower degree $d$,
$\theta_{H^*(N)}:=- \cap [N]:H^*(N)\rightarrow H_{d-*}(N)$ is a morphism of algebras
with respect to the cup product and the composite of
$H(\Delta^!)^\vee$ and the homological cross product.
By naturality of the cup product on Hochschild cohomology defined
(Remark~\ref{cup product of an algebra morphism}) by a morphism of algebras,
this implies that the morphism 
$$
HH^*(1,- \cap [N]):HH^*(H^*(M),H^*(N))\buildrel{\cong}\over
\rightarrow HH^*(H^*(M),H_*(N))
$$
is an isomorphism of algebras of lower degree $d$.
We see that the composite 
$$
\zeta = u\circ HH^*(1,- \cap [N]):HH^*(H^*(M),H^*(N))\buildrel{\cong}\over
\rightarrow  {\mathbb E}_2^{*,*}
$$
is an isomorphism of algebras; see Example~\ref{lem:important_degree} i) and ii). 
This completes the proof.  
\hfill\qed

\begin{rem}\label{zeroelementsofEMSS}
For the EMSS $\{E_r^{*,*}, d_r\}$ described in Theorem \ref{thm:EMSS}, 
we see that $E_r^{p,q}=0$ if $q< - 2p$ 
since $M$ is simply-connected. This implies that ${\mathbb E}_r^{p, q}=0$ if $q > -2p+d$. 
\end{rem}

\begin{proof}[Proof of Corollary~\ref{rational iso of Felix-Thomas
    Poincare}]
Denote by $\int_N:C^*(N)\buildrel{\simeq}\over\rightarrow A(N)$
a quasi-isomorphism of complexes which coincides in homology with the
natural equivalence of algebras between the singular cochains and the
polynomial differential forms~\cite[Corollary 10.10]{F-H-T}.
Let $\psi_N:C_*(N)\hookrightarrow C^*(N)^\vee$ be
the canonical inclusion of the complexe $C_*(N)$ into its bidual
defined in~\cite[7.1]{F-T-VP} or~\cite[Property 57 i)]{Menichi} by $\psi_N(c)(\varphi)=(-1)^{\vert c\vert\vert\varphi\vert}\varphi(c)$
for $c\in C_*(N)$ and $\varphi\in C^*(N)$.
Consider the diagram of complexes
$$
\xymatrix@C30pt@R15pt{
C_*(N)\otimes C_*(N)\ar[rr]^{EZ}\ar[d]_{\psi_N\otimes\psi_N}
&& C_*(N\times N)\ar[d]^{\psi_{N\times N}}\\
C^*(N)^\vee\otimes C^*(N)^\vee\ar[r]\ar[d]_{\int_N^\vee\otimes\int_N^\vee}
& (C^*(N)\otimes C^*(N))^\vee\ar[d]_{(\int_N\otimes\int_N)^\vee}
& C^*(N\times N)^\vee\ar[l]_-{AW^{\vee\vee}}
\ar[d]^{\int_{N\times N}^\vee}\\
A(N)^\vee\otimes A(N)^\vee\ar[r]
&(A(N)\otimes A(N))^\vee
& A(N\times N)^\vee\ar[l]^-{\varphi^\vee}
}
$$
where $EZ$ and $AW$ are the Eilenberg-Zilber and Alexander-Whitney
maps. The bottom left square commutes by naturality of the horizontal
maps.
The top rectangle commutes in homology since
by~\cite[VI.5.4 Theorem]{Bredongeometry},
$<\alpha\times\beta,a\times b>=(-1)^{\vert\beta\vert\vert a\vert}
<\alpha,a><\beta,b>$
for all $\alpha$, $\beta\in H^*(N)$ and $a$, $b\in H_*(N)$.
The bottom right square commutes in homology since using $H(\int_N)$,
$H(\varphi)$ can be identified with the cohomological cross product 
~\cite[Example 2 p. 142-3]{F-H-T} and~\cite[Chap. 5 Sec. 6 14
Corollary]{Spanier}.

Let $\theta_N:A(N)\buildrel{\simeq}\over\rightarrow A(N)^\vee$ be a
quasi-isomorphim of upper degree $-n$ right $A(N)$-linear such that
the image of the fundamental class $[N]$ by the composite
$C_*(N)\buildrel{\psi_N}\over\rightarrow
C^*(N)^{\vee}\buildrel{\int_N^\vee}\over\rightarrow A(N)^\vee$
is the class of $\theta_N(1)$.
Let $\theta_{N\times N}:A(N\times N)\buildrel{\simeq}\over\rightarrow
A(N\times N)^\vee$ be a
quasi-isomorphim of upper degree $-2n$ right $A(N^2)$-linear such that
the image of the fundamental class $[N]\times[N]$ by the composite
$C_*(N\times N)\buildrel{\psi_{N\times N}}\over\rightarrow
C^*(N\times N)^{\vee}\buildrel{\int_{N\times N}^\vee}\over\rightarrow
A(N\times N)^\vee$
is the class of $\theta_{N\times N}(1)$.
Using the previous commutative diagram, the classes
$\varphi^\vee\circ \theta_{N\times N}(1)$ and
$\theta_N(1)\otimes\theta_N(1)$
are equal.

Consider the diagram in the derived category of $A(N)^{\otimes 2}$-modules
$$
\xymatrix{
A(N)\otimes A(N)\ar[rr]^\varphi\ar[d]_{\theta_N\otimes\theta_N}
&& A(N\times N)\ar[r]^-{A(\Delta)}\ar[d]_{\theta_{N\times N}}
& A(N)\ar[d]^{\theta_N}\\
A(N)^\vee\otimes A(N)^\vee\ar[r]
& (A(N)\otimes A(N))^\vee
& A(N\times N)^\vee\ar[r]_-{\Delta^{!\vee}_A}\ar[l]^-{\varphi^\vee}
& A(N)^\vee
}
$$
By Corollary~\ref{shriek diagonal}, the right square commutes in the derived category
of
$A(N\times N)$-modules.
The left rectangle commutes up to homotopy of $A(N)\otimes
A(N)$-modules
since the classes $\varphi^\vee\circ \theta_{N\times N}(1)$ and
$\theta_N(1)\otimes\theta_N(1)$
are equal.

Finally, since $\theta_N:A(N)\buildrel{\simeq}\over\rightarrow
A(N)^\vee$
is a morphism of algebras of upper degree $-n$ in the derived category
of $A(N)^{\otimes 2}$-modules, by example~\ref{examples naturality cup product}ii),
$HH^*(1,\theta_N):HH^*(A(M),A(N))\buildrel{\cong}\over\rightarrow
HH^*(A(M),A(N)^\vee)$ is an isomorphism of algebras of upper degree $-n$.
\end{proof}
\begin{proof}[Proof of Theorem \ref{thm:freeloopExt}]
Let $\varepsilon:\mathbb{B}\buildrel{\simeq}\over\rightarrow C^*(M)$
be a right $C^*(M^2)$-semifree resolution of $C^*(M)$.
By 1) of Proposition~\ref{shriek and Poincare duality} and Corollary
~\ref{shriek diagonal},
$\Delta^!$ fits into
the following homotopy commutative diagram of right
$C^*(M^2)$-modules.
$$
\xymatrix@C30pt@R15pt{
\mathbb{B}\ar[rr]^{\Delta^!}\ar[d]_{\varepsilon}^\simeq
&& C^{*+m}(M^2)\ar[d]^{\sigma^2 \cap \text{ --}}_\simeq\\
C^*(M) \ar[r]^{\sigma \cap \text{ --}}_\simeq
& C_{m-*}(M)\ar[r]^{\Delta_*}
& C_{m-*}(M^2)
}
$$
Here $(\sigma^2 \cap \text{ --})(x)=EZ(\sigma\otimes\sigma)\cap x$.
By applying the functor
$\text{Tor}^*_{C^*(M^4)}(-,C^*(M^2))$,
we obtain the commutative square
$$
\xymatrix@C30pt@R15pt{
\text{Tor}^*_{C^*(M^4)}(C^*(M),C^*(M^2))
\ar[rr]^{\text{Tor}_1(\sigma \cap \text{ --},1)}\ar[d]_{\text{Tor}_1(\Delta^!,1)}
&&\text{Tor}^*_{C^*(M^4)}(C_*(M),C^*(M^2))
\ar[d]^{\text{Tor}_1(\Delta_*,1)}\\
\text{Tor}^*_{C^*(M^4)}(C^*(M^2),C^*(M^2))\ar[rr]_{\text{Tor}_1(\sigma^2 \cap \text{ --},1)}
&&\text{Tor}^*_{C^*(M^4)}(C_*(M^2),C^*(M^2)).
}
$$
Therefore using Theorem~\ref{thm:torsion_product}, $\Phi$ is an
isomorphism
of coalgebras with respect to the dual of the loop product and to the
following composite
$$
\xymatrix@C30pt@R15pt{
\text{Tor}^*_{C^*(M^{2})}(C_*M, C^*M)
\ar[r]^-{\text{Tor}_{p_{13}^*}(1, 1)}              
&  \text{Tor}^*_{C^*(M^{3})}(C_*M, C^*M)\\
\text{Tor}^{*}_{C^*(M^{4})}(C_*(M^{2}), C^*(M^2))
&               \text{Tor}^*_{C^*(M^{4})}(C_*(M), C^*(M^2)) 
\ar[u]_{\text{Tor}_{(1\times \Delta \times 1)^*}(1, {\Delta}^*)}^{\cong} 
\ar[l]^{\text{Tor}_{1}(\Delta_*, 1)}\\
\text{Tor}^*_{C^*(M^{4})}(C_*(M)^{\otimes 2},C^*(M^2))
\ar[r]_-{\text{Tor}_{EZ^\vee}(1,EZ^\vee)}^-\cong
\ar[u]_-{\text{Tor}_{1}(EZ,1)}^-\cong
& \text{Tor}^*_{(C_*(M^{2})^{\otimes 2})^\vee}(C_*(M)^{\otimes 2},
(C_*(M)^{\otimes 2})^\vee)\\
\text{Tor}^*_{C^*(M^{2})}(C_*(M), C^*(M))^{\otimes 2}\ar[r]^-{\cong}_-\top
& \text{Tor}^*_{C^*(M^{2})^{\otimes 2}}(C_*(M)^{\otimes 2},
C^*(M)^{\otimes 2}).
\ar[u]^{\text{Tor}_{\gamma}(1,\gamma)}_-\cong
}
$$
Dualizing and using the natural isomorphism
$$
\text{Ext}^*_B(Q,P^\vee)\buildrel{\cong}\over\rightarrow \text{Tor}^*_B(P,Q)^\vee
$$
for any differential graded algebra $B$, right $B$-module $P$ and left $B$-module $Q$,
we see that the dual of $\Phi$ is an isomorphism of algebras with
respect to the loop product and to the multiplication defined in Theorem~\ref{thm:freeloopExt}.
\end{proof}

\section{Associativity of the loop product on a Poincar\'e duality
  space}\label{associativity loop product}
In this section, by applying the same argument as in the proof of \cite[Theorem 2.2]{T}, 
we shall prove the associativity of the loop products. 

\medskip
\noindent
{\it Proof of Proposition \ref{prop:loop_homology}}. We prove the proposition in the case where $N=M$. 
The same argument as in the proof permits us to conclude that the loop homology ${\mathbb H}_*(L_NM)$ is associative with respect to 
the relative loop products.

Let $M$ be a simply-connected Gorenstein space of dimension $d$. 
In order to prove the associativity of the dual to $Dlp$, we first consider the diagram  
$$
\xymatrix@C25pt@R15pt{
LM\times LM & (LM\times_M LM)\times LM \ar[l]_(0.6){Comp\times 1} \ar[r]^{q\times 1} & LM \times LM \times LM \\
LM\times_M LM \ar[u]^q \ar[d]_{Comp} & LM\times_M LM \times_M LM \ar[l]_(0.6){Comp\times_M 1} \ar[r]^{q\times_M1} \ar[u]_{1\times_ Mq} 
\ar[d]^{1\times_M Comp} &  
LM \times (LM \times_M LM) \ar[u]_{1\times q} \ar[d]^{1\times Comp}\\
LM & LM\times_M LM \ar[l]^{Comp} \ar[r]_q & LM\times LM, 
}
$$
for which the lower left hand-side square is homotopy commutative and other three square 
are strictly commutative.
%
Consider the corresponding diagram
$$
\xymatrix@C20pt@R15pt{
H^*(LM\times LM)\ar[r]^(0.4){(Comp\times 1)^*}
& H^*((LM\times_M LM)\times LM)  \ar[r]^{\varepsilon' \alpha H(q^!)\otimes 1}
&H^*(LM \times LM \times LM) \\
H^*(LM\times_M LM) \ar[u]^{H(q^!)} \ar[r]_(0.4){(Comp\times_M 1)^*}
& H^*(LM\times_M LM \times_M LM) \ar[r]^{H((q\times_M1)^!)} \ar[u]_{H((1\times_ Mq)^!)}
 &  H^*(LM \times (LM \times_M LM)) \ar[u]_{\varepsilon\alpha'1\otimes H(q^!)}\\
H^*(LM) \ar[u]^{Comp^*} \ar[r]_{Comp^*}
& H^*(LM\times_M LM) \ar[r]_{H(q^!)} \ar[u]^{(1\times_M Comp)^*}
& H^*(LM\times LM)\ar[u]_{(1\times Comp)^*}. 
}
$$
The lower left square commutes obviously.
By Theorem~\ref{naturalityshriek}, the upper left square and the lower right square are commutative.
We now show that the upper right square commutes.

By Theorem~\ref{produitshriek}, we see that $H((q\times 1)^!)=\alpha H(q^!)\otimes 1$
and $H((1\times q)^!)=\alpha'1\otimes H(q^!)$ where $\alpha$ and $\alpha'\in \K^*$.
By virtue of \cite[Theorem C]{F-T}, in  $\D(\text{Mod-}C^*(LM^{\times 3}))$, 
$$\varepsilon'(\Delta \times 1)^!\circ \Delta^! =\varepsilon(1\times \Delta)^!\circ \Delta^!$$
where $(\varepsilon,\varepsilon')\neq (0,0)\in \K\times \K$.
Therefore the uniqueness of the shriek map implies that 
$$
\varepsilon'(q\times 1)^!\circ (1 \times_M q)^! =\varepsilon(1\times q)^!\circ (q\times_M 1)^!
$$ 
in $\D(\text{Mod-}C^*(LM^{\times 3}))$; see \cite[Theorem 13]{F-T}.  

So finally, we have proved that
$$\varepsilon'\alpha(Dlp \otimes 1)\circ  Dlp=\varepsilon\alpha'(1\otimes Dlp)\circ  Dlp.$$

Suppose that $M$ is a Poincar\'e duality space of dimension $d$.
By part (2) of Theorem~\ref{produitshriek},
$\alpha=1$ and $\alpha'=(-1)^d$.
Since $\varepsilon (\omega_M\times \omega_M\times\omega_M)=\varepsilon H((\Delta \times 1)^!)\circ H(\Delta^!)  (\omega_M)=
\varepsilon' H((1\times\Delta)^!)\circ H(\Delta^!)  (\omega_M)=\varepsilon' (\omega_M\times \omega_M\times\omega_M)$, we see that
$\varepsilon=\varepsilon'$. Therefore $(Dlp \otimes 1)\circ
Dlp=(-1)^d(1\otimes Dlp)\circ  Dlp$. Thus Lemma
\ref{lem:important_degree}(i)
together with Lemma~\ref{lem:dualsign} (i) and (ii)
 yields that the product 
	$m : {\mathbb H}_*(LM)\otimes {\mathbb H}_*(LM) \to {\mathbb H}_*(LM)$ is  associative.

We prove that the loop product is graded commutative. Consider the commutative diagram 
$$
\xymatrix@C10pt@R5pt{
  & LM \times_M LM \ar[ld]_(0.5){T} \ar@{->}'[d][dd]  \ar[rr]^{q} & & LM\times LM  
  \ar[ld]^{T} \ar[dd]^{p\times p} \\
LM \times_M LM\ar[rr]^(0.6){q} \ar[dd]_p & & LM \times LM  \ar[dd]^(0.3){p\times p} & \\
   & M \ar@{->}'[r]^(0.6){\Delta}[rr] \ar@{=}[ld]& & M \times M \ar[ld]_{T}\\
M \ar[rr]_\Delta  & & M\times M. &
}
$$
By Theorem \ref{thm:newThm3} below,
$H(q^!)\circ T^*=\varepsilon T^*\circ H(q^!)$.
Since $Comp\circ T$ is homotopic to $Comp$,
$Dlp=\varepsilon T^*\circ Dlp$.
If $M$ is a Poincar\'e duality space with orientation class $\omega_M\in H^d(M)$
then $T^*(\omega_M\otimes \omega_M) =(-1)^{d^2} (\omega_M\otimes \omega_M)$.
Therefore by part a) of Remark~\ref{remafternewthm3}, $\varepsilon=(-1)^d$. 
By Lemma  \ref{lem:important_degree}(ii) together with Lemma~\ref{lem:dualsign}(i), we see that the product $m$ is graded commutative. 
This completes the proof. 
\hfill\qed

\begin{rem}
The commutativity of the loop homology ${\mathbb H}_*(L_NM)$ 
does not follow from the proof of Proposition \ref{prop:loop_homology}. In general $Comp\circ T$ is not homotopic to 
$Comp$ in $L_NM$.  As mentioned in the Introduction, the relative loop product is not necessarily commutative; see \cite{Naito}.
\end{rem}


\section{Appendix: Properties of shriek maps}
In this section, we extend the definitions and properties of shriek
maps on Gorenstein spaces given in~\cite{F-T}. These properties are
used in section~\ref{associativity loop product}.
\begin{defn}
A pull-back diagram,
$$
\xymatrix@C15pt@R15pt{
 X   \ar[d]_{q}  \ar[rr]^g &&E \ar[d]^p\\
 N\ar[rr]^f &&M}
$$
satisfies Hypothesis (H) (Compare with the hypothesis (H) described in \cite[page 418]{F-T}) if $p:E\twoheadrightarrow M$ is a fibration,
for any $n\in\mathbb{N}$, $H^n(E)$ is of finite dimension 
and 

\smallskip
\centerline{$
\hspace{5mm}\left\{
\begin{array}{l}
  N \mbox{ is an oriented  Poincar\'e duality space of dimension } n\,,\\
 M  \mbox{ is a  1-connected   oriented  Poincar\'e duality space of dimension } m\,,
\end{array}\right.
$}

\smallskip
\noindent or $f:B^r\rightarrow B^t$ is the product of diagonal maps
$B\rightarrow B^{n_i}$,
the identity map of $B$, the inclusion $\eta:{*}\rightarrow B$ for a simply-connected
$\K$-Gorenstein space $B$.
\end{defn}
Let $n$ be the dimension of $N$ or $r$ times the dimension of $B$.
Let $m$ be the dimension of $M$ or $t$ times the dimension of $B$.
It follows from \cite[Lemma 1 and Corollary p. 448]{F-T}
that $H^q(N)\cong \text{Ext}_{C^*(M)}^{q+m-n}(C^*(N),C^*(M))$.
By definition, a shriek map $f^!$ for $f$ is a generator of $\text{Ext}_{C^*(M)}^{\leq
  m-n}(C^*(N),C^*(M))$. Moreover, there exists an unique element  $g^!\in
\text{Ext}_{C^*(E)}^{m-n}(C^*(X),C^*(E))$ such that
$g^!\circ C^*(q)= C^*(p)\circ f^!$ 
in the derived category of $C^*(M)$-modules; see Theorem \ref{thm:main_F-T}. 

Here we have extended the definitions of shriek maps due to Felix and
Thomas in order to include the following example and the case $(\Delta\times 1)^!$ that we use in 
the proof of Proposition \ref{prop:loop_homology}.
\begin{ex}\label{intersection fibration}(Compare with~\cite[p. 419-420]{F-T} where $M$ is a Poincar\'e duality space)
Let $F\buildrel{\widetilde{\eta}}\over\rightarrow
E\buildrel{p}\over\rightarrow M$ be a fibration over a
simply-connected Gorenstein space $M$ with generator
$\eta^!=\omega_M\in\text{Ext}^m_{C^*(M)}(\K,C^*(M))$.
By definition, $H(\widetilde{\eta}^!):H^*(F)\rightarrow H^{*+m}(E)$
is the dual to the {\it intersection morphism}.

Let $G$ be a connected Lie group. Then its classifying space $BG$ is an
example of Gorenstein space of negative dimension.
Let $F$ be a $G$-space. It is not difficult to see that our intersection morphism of $F\rightarrow
F\times_G EG\rightarrow BG$  coincides with the integration along
the
fibre of the principal $G$-fibration $G\rightarrow F\times EG\rightarrow
F\times_G EG$ for an appropriate choice of the generator $\eta^!$; see the proof of \cite[Theorem 6]{F-T}. 

Suppose now that $F\buildrel{\widetilde{\eta}}\over\rightarrow
E\buildrel{p}\over\rightarrow M$ is a monoidal fibration.
With the properties of shriek maps given in this section,
generalizing~\cite[Theorem 10]{F-T} (See also~\cite[Proposition 10]{G-S}) in the
Gorenstein case,
one can show that the intersection morphism
$H(\widetilde{\eta}_!):H_{*+m}(E)\rightarrow H_*(F)$
is multiplicative if in the derived category of $C^*(M\times
M)$-modules
$$
\Delta^!\circ \omega_M=\omega_M\times \omega_M.
\eqnlabel{add-1}
$$
The generator $\Delta^!\in\text{Ext}^m_{C^*(M^2)}(C^*(M),C^*(M^2))$
is defined up to a multiplication by a scalar.
If we could prove that $\Delta^!\circ \omega_M$ is always not zero,
we would have an unique choice for $\Delta^!$ satisfying (11.1). 
Then we would have solved the ``up to a constant
problem'' mentioned in~\cite[Q1 p. 423]{F-T}.
\end{ex}

We now describe a generalized version of \cite[Theorem 3]{F-T}. We consider the following commutative diagram.  
$$
\xymatrix@C15pt@R5pt{
  & X  \ar[ld]_{v} \ar@{->}'[d][dd]_(0.4){q}  \ar[rr]^{g} & & E \ar[ld]^(0.4){k} \ar[dd]^{p} \\
X' \ar[rr]^(0.6){g'} \ar[dd]_{q'} & & E' \ar[dd]^(0.4){p'}& \\
   & N\ar@{->}'[r]_(0.7){f}[rr] \ar@{->}[ld]_{u} & & M \ar[ld]^{h} \\
N' \ar[rr]_{f'}  & & M'  &
}
$$
in which the back and the front squares satisfies Hypothesis (H). 

\begin{thm} \label{thm:newThm3} {\em (}Compare with \cite[Theorem 3]{F-T}{\em )}   
With the above notations, suppose that $m'-n'=m-n$.

{\em (1)} If $h$ is a homotopy equivalence then in the derived category of $C^*(M')$-modules, $f^!\circ C^*(u)=\varepsilon C^*(h)\circ f'^!$, 
where $\varepsilon\in \K$.

{\em (2)} If in the derived category of $C^*(M')$-modules, $f^!\circ C^*(u)=\varepsilon C^*(h)\circ f'^!$ then
in the derived category of $C^*(E')$-modules, $g^!\circ C^*(v)=\varepsilon C^*(k)\circ g'^!$. In particular, 
$$
H^*(g^!)\circ H^*(v) = \varepsilon H^*(k)\circ H^*({g'}^!) .
$$
\end{thm}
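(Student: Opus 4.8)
The plan is to prove the two parts separately, treating (1) as a one\nobreakdash-dimensionality argument and (2) as the substantial point, whose crux is an Eilenberg--Moore/adjunction isomorphism that lets one ``cancel'' the structure map $C^*(q')$. Throughout I write $f^*$ for $C^*(f)$, etc., and I keep track of which derived category each equation lives in.

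For \textbf{(1)} I would first observe that both $f^!\circ u^*$ and $h^*\circ f'^!$ are elements of $\text{Ext}^{m-n}_{C^*(M')}(C^*(N'),C^*(M))$, where $C^*(M)$ is viewed as a $C^*(M')$\nobreakdash-module through $h^*$ and $C^*(N')$ through $f'^*$; the degrees match because $m-n=m'-n'$. Since $h$ is a homotopy equivalence, $h^*$ is a quasi-isomorphism, so $\text{Ext}^{*}_{C^*(M')}(C^*(N'),C^*(M))\cong\text{Ext}^{*}_{C^*(M')}(C^*(N'),C^*(M'))$. By the isomorphism $H^q(N')\cong\text{Ext}^{q+m'-n'}_{C^*(M')}(C^*(N'),C^*(M'))$ recorded just before the theorem (from \cite[Lemma 1 and Corollary p.~448]{F-T}) together with the connectedness of $N'$, this group is one-dimensional in degree $m'-n'=m-n$. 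As $h^*\circ f'^!$ is the image of the generator $f'^!$ under the isomorphism $\text{Ext}(1,h^*)$, it generates the target, and hence $f^!\circ u^*=\varepsilon\,h^*\circ f'^!$ for a unique $\varepsilon\in\K$.

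For \textbf{(2)} I would first run a purely formal diagram chase in $\D(\text{Mod-}C^*(M'))$ showing that $g^!\circ v^*$ and $\varepsilon\,k^*\circ g'^!$ agree after precomposition with $q'^*$. Using $q'v=uq$ and $hp=p'k$ on spaces, the defining relations $g^!\circ q^*=p^*\circ f^!$ and $g'^!\circ q'^*=p'^*\circ f'^!$ of Theorem~\ref{thm:main_F-T}, and the hypothesis $f^!\circ u^*=\varepsilon\,h^*\circ f'^!$, one gets
$$g^!\circ v^*\circ q'^*=g^!\circ q^*\circ u^*=p^*\circ f^!\circ u^*=\varepsilon\,p^*\circ h^*\circ f'^!=\varepsilon\,k^*\circ p'^*\circ f'^!=\varepsilon\,k^*\circ g'^!\circ q'^*$$
as morphisms $C^*(N')\to C^*(E)$ in $\D(\text{Mod-}C^*(M'))$, each object restricted to $C^*(M')$ through the evident structure map. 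The main step is then to cancel $q'^*$, promoting this from $C^*(N')$ to $C^*(X')$ and from $C^*(M')$-linearity to $C^*(E')$-linearity. Here I would invoke the Eilenberg--Moore quasi-isomorphism for the front pull-back, $C^*(X')\simeq C^*(N')\otimes^{\mathbb L}_{C^*(M')}C^*(E')$ as right $C^*(E')$-modules (valid since $M'$ is simply-connected and $p'$ is a fibration, both guaranteed by Hypothesis (H)), under which $q'^*$ becomes the unit $n'\mapsto n'\otimes 1$. Combined with the base-change/restriction adjunction this yields an isomorphism $\text{Ext}^{*}_{C^*(E')}(C^*(X'),C^*(E))\xrightarrow{\ \cong\ }\text{Ext}^{*}_{C^*(M')}(C^*(N'),C^*(E))$, $\phi\mapsto(\text{res}\,\phi)\circ q'^*$. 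Since $g^!\circ v^*$ and $\varepsilon\,k^*\circ g'^!$ have the same image, they coincide in $\text{Ext}^{m-n}_{C^*(E')}(C^*(X'),C^*(E))$, i.e.\ in $\D(\text{Mod-}C^*(E'))$; applying the functor $H^*$ then gives $H^*(g^!)\circ H^*(v)=\varepsilon\,H^*(k)\circ H^*(g'^!)$.

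The hard part will be the cancellation step in (2): correctly pinning down the derived categories and module structures at each stage, and recognizing that precomposition with $q'^*$ is exactly the base-change adjunction isomorphism supplied by Eilenberg--Moore, so that an equality holding only over the base $C^*(M')$ can be upgraded to one over the total space $C^*(E')$. By contrast, the chase displayed above is routine once the variances are fixed, and part (1) is immediate from the one-dimensionality input. I expect no extra finiteness hypotheses beyond those already built into (H), since the adjunction isomorphism is formal and the only analytic ingredient is the Eilenberg--Moore equivalence for the front square.
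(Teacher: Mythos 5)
Your proposal is correct and follows essentially the same route as the paper: part (1) is the same one-dimensionality argument via the isomorphism $\text{Ext}^*_{C^*(M')}(C^*(N'),C^*(h))$, and part (2) is the paper's cube chase combined with the fact that $\text{Ext}^*_{C^*(p')}(C^*(q'),C^*(E)):\text{Ext}^*_{C^*(E')}(C^*(X'),C^*(E))\to\text{Ext}^*_{C^*(M')}(C^*(N'),C^*(E))$ is an isomorphism, which the paper obtains as a generalization of F\'elix--Thomas's Theorem 2 and which you correctly identify as the Eilenberg--Moore base-change adjunction for the front pull-back.
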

\begin{rem}\label{remafternewthm3}
a) In (1), if $N'$ and $M'$ are oriented Poincar\'e duality spaces, the constant
$\varepsilon$ is given by
$$
H^n(f^!)\circ H^n(u)(\omega_{N'}) = \varepsilon H^{m'}(h)\circ H^{n'}({f'}^!)(\omega_{N'}) .
$$
In fact, this is extracted from the uniqueness of the shriek map described in \cite[Lemma 1]{F-T}. 

b) In \cite[Theorem 3]{F-T}, it is not useful that $v$ and $k$ are homotopy equivalence. But in \cite[Theorem 3]{F-T}, the homotopy equivalences $u$ and $h$
should be orientation preserving in order to deduce $\varepsilon =1$.

c) If the bottom square is the pull-back along a smooth embedding $f'$
of compact oriented manifolds and a smooth map $h$ transverse to $N'$.
Then by~\cite[Proposition 4.2]{Me}, $f^!\circ C^*(u)=
C^*(h)\circ f'^!$ and $
H^*(g^!)\circ H^*(v) = H^*(k)\circ H^*({g'}^!)$.
\end{rem}
\begin{proof}[Proof of Theorem \ref{thm:newThm3}]
The proofs of (1) and (2) follow from the proof of \cite[Theorem 3]{F-T}.
But we review this proof, in order to explain that
Theorem~\ref{thm:newThm3} is valid in the Gorenstein case
and that we don't need to assume as in~\cite[Theorem 3]{F-T}
that $u$, $k$ and $v$ are homotopy equivalence.

(1) Since $h$ is a homotopy equivalence,
\begin{eqnarray*}
\text{Ext}^*_{C^*(M')}(C^*(N'),C^*(h)): &&\\  
&& \hspace{-3cm} \text{Ext}^*_{C^*(M')}(C^*(N'),C^*(M')) \rightarrow 
\text{Ext}^*_{C^*(M')}(C^*(N'),C^*(M))
\end{eqnarray*}
is an isomorphism.
By definition~\cite[Theorem 1 and p. 449]{F-T}, the shriek map $f'^!$ is a generator
of
$\text{Ext}^{m'-n'}_{C^*(M')}(C^*(N'),C^*(M'))\cong \K$. Then 
$C^*(h)\circ f'^!$ is a generator of $\text{Ext}^{m'-n'}_{C^*(M')}(C^*(N'),C^*(M))$.
So since $f^!\circ C^*(u)$ is in $\text{Ext}^{m-n}_{C^*(M')}(C^*(N'),C^*(M))$, we have (1).

(2) Let $P$ be any $C^*(E')$-module. Since $X'$ is a pull-back, a straightforward generalization of~\cite[Theorem 2]{F-T}
shows that
$$
\text{Ext}^*_{C^*(p')}(C^*(q'),P):
\text{Ext}^*_{C^*(E')}(C^*(X'),P)\rightarrow 
\text{Ext}^*_{C^*(M')}(C^*(N'),P)
$$ is an isomorphism.
Take $P:=C^*(E)$.
Consider the following cube in the derived category of $C^*(M')$-modules.
$$
\xymatrix@C15pt@R8pt{
  & C^*(X)    \ar[rr]^{g^!}
  & & C^*(E)  \\
C^*(X') \ar[ru]^{C^*(v)} \ar[rr]^(0.6){g'^!} & & C^*(E') \ar[ur]_(0.4){C^*(k)} & \\
   & C^*(N) \ar@{->}'[u][uu]^(0.4){C^*(q)}   \ar@{->}'[r]_(0.7){f^!}[rr]  & & C^*(M). \ar[uu]_{C^*(p)} \\
C^*(N') \ar[rr]_{f'^!} \ar[uu]^{C^*(q')} \ar@{->}[ur]^{C^*(u)} & & C^*(M') \ar[ur]_{C^*(h)} \ar[uu]_(0.4){C^*(p')}&
}
$$
Since in $\text{Ext}^*_{C^*(M')}(C^*(N'),C^*(E))$, the elements
$g^!\circ C^*(v)\circ C^*(q')$ and $\varepsilon C^*(k)\circ g'^!\circ C^*(q')$ are equal, the assertion (2) follows.
\end{proof}

When $u$ and $h$ are the identity maps, Theorem~\ref{thm:newThm3}
gives~\cite[Theorem 4]{F-T} (Compare with~\cite[Lemma 4]{G-S})
and the following variant for Gorenstein spaces: 

\begin{thm}{\em (}Naturality of shriek maps with respect to pull-backs {\em )}\label{naturalityshriek}
Consider the two pull-back squares
$$
\xymatrix@C20pt@R10pt{
X\ar[r]^{g}\ar[d]_v
& E\ar[d]^{k}\\
X'\ar[r]^{g'}\ar[d]_{q'}
& E'\ar[d]^{p'}\\
B^r\ar[r]_{\Delta}
& B^t
}
$$
where $\Delta:B^r\rightarrow B^t$ is the product of diagonal maps of a simply-connected
$\K$-Gorenstein space $B$ and $p'$ and $p'\circ k$ are two fibrations.
Then in the derived category of $C^*(E')$-modules,
$g^!\circ C^*(v)= C^*(k)\circ g'^!.$
\end{thm}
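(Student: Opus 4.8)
The plan is to obtain this theorem as the specialization of Theorem~\ref{thm:newThm3} in which the two lower horizontal maps $u$ and $h$ of the cube are taken to be identity maps and the two Gorenstein maps $f$, $f'$ are both the single map $\Delta$. First I would assemble the cube by setting $N=N'=B^r$, $M=M'=B^t$, $f=f'=\Delta$, $u=\mathrm{id}_{B^r}$ and $h=\mathrm{id}_{B^t}$. The front square of the cube is then exactly the lower pull-back square of the hypothesis, while the back square is produced by composing the two given pull-back squares vertically: since a composite of pull-backs is again a pull-back, the back square — with legs $q=q'\circ v$ and $p=p'\circ k$ — is a genuine pull-back over $B^t$ along $\Delta$.

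Next I would check that both squares satisfy Hypothesis (H) so that Theorem~\ref{thm:newThm3} applies. For the front square, $p'$ is a fibration by assumption, the map $\Delta$ is a product of diagonal maps of the simply-connected $\K$-Gorenstein space $B$ (the second alternative in Hypothesis~(H)), and $H^n(E')$ is finite-dimensional by the standing finiteness hypothesis on spaces. For the back square, $p=p'\circ k$ is a fibration by assumption, $\Delta$ again witnesses the Gorenstein alternative, and $H^n(E)$ is finite-dimensional for the same reason. Moreover the numerical condition $m'-n'=m-n$ of Theorem~\ref{thm:newThm3} is automatic here, since $m=m'=t\dim B$ and $n=n'=r\dim B$, so both differences equal $(t-r)\dim B$.

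With the hypotheses in place I would run the two parts of Theorem~\ref{thm:newThm3} in turn. Part~(1) applies because $h=\mathrm{id}$ is a homotopy equivalence, giving $f^!\circ C^*(u)=\varepsilon\,C^*(h)\circ f'^!$ in the derived category of $C^*(M')$-modules for some scalar $\varepsilon\in\K$. Since $u$ and $h$ are identities and the shriek maps of $f=f'=\Delta$ are chosen to be the \emph{same} generator $\Delta^!$, this relation collapses to $\Delta^!=\varepsilon\,\Delta^!$, and as $\Delta^!$ is a nonzero generator we conclude $\varepsilon=1$. Feeding $\varepsilon=1$ into part~(2) of Theorem~\ref{thm:newThm3} then yields $g^!\circ C^*(v)=C^*(k)\circ g'^!$ in the derived category of $C^*(E')$-modules, which is precisely the assertion.

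The only step demanding care — rather than a purely mechanical appeal to the black box — is the identification $\varepsilon=1$: one must be explicit that $f^!$ and $f'^!$ are taken to be literally the same element $\Delta^!$, not two a priori independent generators of the rank-one $\mathrm{Ext}$-group differing by an unknown scalar. Once this bookkeeping on the choice of generator is made, the conclusion is immediate, and no further computation is needed.
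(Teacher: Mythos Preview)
Your proof is correct and is exactly the approach taken in the paper: the paper simply states, immediately before Theorem~\ref{naturalityshriek}, that it follows from Theorem~\ref{thm:newThm3} by taking $u$ and $h$ to be the identity maps. Your additional care in verifying Hypothesis~(H) for both squares and in arguing that $\varepsilon=1$ (because $f^!$ and $f'^!$ are literally the same generator $\Delta^!$) fills in precisely the details the paper leaves implicit.
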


 \begin{thm}{\em (}Products of shriek maps {\em )}\label{produitshriek}
Let 

\  \  \  \  \  \  \
$
\xymatrix@C15pt@R15pt{
 X   \ar[d]_{q}  \ar[rr]^g &&E \ar[d]^p&\ar@{}[d] & \\
 N\ar[rr]^f && M&}
$ 
and \ \  \  \  \  \  \  \  \  \  \  \  
$\xymatrix@C15pt@R15pt{
X'   \ar[d]_{q'}  \ar[rr]^{g'} &&E' \ar[d]^{p'}\\
 N'\ar[rr]^{f'} &&M'}
$

\medskip
\noindent
be two pull-back diagrams satisfying Hypothesis (H).
Let $$EZ^\vee:C^*(M\times M')\buildrel{\simeq}\over\rightarrow \left(C_*(M)\otimes C_*(M')\right)^\vee$$
be the quasi-isomorphism of algebras dual to the Eilenberg-Zilber morphism.
Let $$\Theta: C^*(M)\otimes C^*(M')\buildrel{\simeq}\over\rightarrow\left(C_*(M)\otimes C_*(M')\right)^\vee$$ be the quasi-isomorphism of algebras
sending the tensor product of cochains $\varphi\otimes \varphi'$ to the form denoted again $\varphi\otimes \varphi'$ defined
by $(\varphi\otimes \varphi')(a\otimes b)=(-1)^{\vert\varphi'\vert\vert a\vert}\varphi(a)\varphi'(b)$.
Then

{\em (1)} there exists $h\in\text{\em Ext}^{m+m'-n-n'}_{\left(C_*(M)\otimes C_*(M')\right)^\vee}(\left(C_*(N)\otimes C_*(N')\right)^\vee,\left(C_*(M)\otimes C_*(M')\right)^\vee )$ such that in the derived category of $C^*(M\times M')$-modules
$$
\xymatrix@C15pt@R15pt{
C^*(N\times N')\ar[r]^ {(f\times f')^!}\ar[d]_{EZ^\vee}
& C^*(M\times M')\ar[d]^{EZ^\vee}\\
\left(C_*(N)\otimes C_*(N')\right)^\vee\ar[r]_{h}
&\left(C_*(M)\otimes C_*(M')\right)^\vee
}
$$
and in the derived category of $C^*(M)\otimes C^*(M')$-modules
$$
\xymatrix@C15pt@R15pt{
\left(C_*(N)\otimes C_*(N')\right)^\vee\ar[r]^{h}
&\left(C_*(M)\otimes C_*(M')\right)^\vee\\
C^*(N)\otimes C^*(N')\ar[r]_{\varepsilon f^!\otimes f'^!}\ar[u]^{\Theta}
& C^*(M)\otimes C^*(M')\ar[u]_{\Theta}
}
$$
are commutative squares for some $\varepsilon\in \K^*$.

{\em (2)}  Suppose that $N$, $N'$, $M$ and $M'$ are Poincar\'e duality spaces oriented by $\omega_N\in H^n(N)$, $\omega_{N'}\in H^{n'}(N')$, $\omega_M\in H^m(M)$ and $\omega_{M'}\in H^{m'}(M')$. If we orient $N\times N'$ by $\omega_{N}\times\omega_{N'}$ and $M\times M'$ by $\omega_{M}\times\omega_{M'}$ then
$\varepsilon=(-1)^{(m'-n')n}$.

{\em (3)} There exists $k\in\text{\em Ext}^{m+m'-n-n'}_{\left(C_*(E)\otimes C_*(E')\right)^\vee}(\left(C_*(X)\otimes C_*(X')\right)^\vee,\left(C_*(E)\otimes C_*(E')\right)^\vee )$ such that in the derived category of $C^*(E\times E')$-modules
$$
\xymatrix@C15pt@R15pt{
C^*(X\times X')\ar[r]^ {(g\times g')^!}\ar[d]_{EZ^\vee}
& C^*(E\times E')\ar[d]^{EZ^\vee}\\
\left(C_*(X)\otimes C_*(X')\right)^\vee\ar[r]_{k}
&\left(C_*(E)\otimes C_*(E')\right)^\vee
}
$$
and in the derived category of $C^*(E)\otimes C^*(E')$-modules
$$
\xymatrix@C15pt@R15pt{
\left(C_*(X)\otimes C_*(X')\right)^\vee\ar[r]^{k}
&\left(C_*(E)\otimes C_*(E')\right)^\vee\\
C^*(X)\otimes C^*(X')\ar[r]_{\varepsilon g^!\otimes g'^!}\ar[u]^{\Theta}
& C^*(E)\otimes C^*(E')\ar[u]_{\Theta}
}
$$
are commutative squares.  
\end{thm}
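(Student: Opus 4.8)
The plan is to manufacture $h$ by transporting the product shriek map $(f\times f')^!$ across the quasi-isomorphism $EZ^\vee$, and then to identify the transported class as a nonzero scalar multiple of $f^!\otimes f'^!$ by a one-dimensionality argument. First I would check that the product of the two pull-back diagrams is again a pull-back diagram satisfying Hypothesis (H): products of $1$-connected oriented Poincar\'e duality spaces are again such, products of fibrations are fibrations, the finite-type and simple-connectivity conditions are preserved, and in the Gorenstein alternative $f\times f'$ is again built from diagonals, identities and inclusions. Hence Theorem~\ref{thm:main_F-T} applies to the product and supplies the generator $(f\times f')^!\in\text{Ext}^{m+m'-n-n'}_{C^*(M\times M')}(C^*(N\times N'),C^*(M\times M'))$. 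Since $EZ^\vee:C^*(M\times M')\to(C_*(M)\otimes C_*(M'))^\vee$ is a quasi-isomorphism of algebras (and likewise on $N\times N'$), it is invertible in the derived category, and I define $h:=EZ^\vee\circ(f\times f')^!\circ(EZ^\vee)^{-1}$, which makes the first square of (1) commute by construction.

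For the second square I would transport $h$ back along $\Theta$ and compare it with $f^!\otimes f'^!$ inside $\text{Ext}^{m+m'-n-n'}_{C^*(M)\otimes C^*(M')}(C^*(N)\otimes C^*(N'),C^*(M)\otimes C^*(M'))$. Over the field $\K$ the K\"unneth theorem makes the Cartan--Eilenberg $\vee$-product
$$\text{Ext}^*_{C^*(M)}(C^*(N),C^*(M))\otimes\text{Ext}^*_{C^*(M')}(C^*(N'),C^*(M'))\longrightarrow\text{Ext}^*_{C^*(M)\otimes C^*(M')}(C^*(N)\otimes C^*(N'),C^*(M)\otimes C^*(M'))$$
an isomorphism; since each factor is the shifted cohomology of $N$, resp. $N'$, the target is one-dimensional in the lowest degree $m+m'-n-n'$, with generator the image of $f^!\otimes f'^!$. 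Both $\Theta^{-1}\circ h\circ\Theta$ and $f^!\otimes f'^!$ lie in this one-dimensional space and are generators (the former because $(f\times f')^!$ is a generator and $EZ^\vee,\Theta$ are quasi-isomorphisms), so they coincide up to a scalar $\varepsilon\in\K^\ast$, which is exactly assertion (1).

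To compute $\varepsilon$ in (2) I would evaluate the second square on fundamental classes in cohomology. The key identity is that the two quasi-isomorphisms agree on cross products, $H(\Theta)(\alpha\otimes\beta)=H(EZ^\vee)(\alpha\times\beta)$, which follows from Bredon's pairing rule $\langle\alpha\times\beta,a\times b\rangle=(-1)^{|\beta||a|}\langle\alpha,a\rangle\langle\beta,b\rangle$ of~\cite[VI.5.4 Theorem]{Bredongeometry} matched against the Koszul sign in the definition of $\Theta$. Feeding $\omega_N\otimes\omega_{N'}$ through $\Theta\circ(\varepsilon f^!\otimes f'^!)$ produces the sign $(-1)^{(m'-n')n}$ from commuting $f'^!$ (of degree $m'-n'$) past $\omega_N$ (of degree $n$), giving $\varepsilon(-1)^{(m'-n')n}\,\Theta(\omega_M\otimes\omega_{M'})$; whereas running $\omega_N\otimes\omega_{N'}$ through $h\circ\Theta=EZ^\vee\circ(f\times f')^!\circ(EZ^\vee)^{-1}\circ\Theta$ and using $H((f\times f')^!)(\omega_N\times\omega_{N'})=\omega_M\times\omega_{M'}$ together with the cross-product identity yields exactly $\Theta(\omega_M\otimes\omega_{M'})$. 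Comparing the two nonzero expressions gives $\varepsilon=(-1)^{(m'-n')n}$.

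Finally, for (3) I would run the identical construction one level up, over $C^*(E\times E')$ and $(C_*(E)\otimes C_*(E'))^\vee$, defining $k:=EZ^\vee\circ(g\times g')^!\circ(EZ^\vee)^{-1}$ so that the first square of (3) commutes. The product shriek map $(g\times g')^!$ is the extension of $(f\times f')^!$ along the fibration $p\times p'$ furnished by Theorem~\ref{thm:main_F-T}, and $g^!\otimes g'^!$ is the corresponding extension of $f^!\otimes f'^!$ (here $X\times X'$ is the pull-back of $N\times N'\to M\times M'\leftarrow E\times E'$, so the change-of-rings map $\text{Ext}^*_{C^*(p\times p')}(C^*(q\times q'),-)$ is an isomorphism as in the proof of Theorem~\ref{thm:newThm3}). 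Since $EZ^\vee$ and $\Theta$ are natural, the commuting squares established downstairs together with the uniqueness of these extensions force the very same scalar $\varepsilon$ to appear in the second square of (3). The main obstacle throughout is the sign bookkeeping of part (2): one must align the Koszul convention used in the definition of $\Theta$, the sign in the tensor product of the degree $m-n$ and $m'-n'$ maps, and Bredon's cross-product sign so that they combine to leave precisely $(-1)^{(m'-n')n}$.
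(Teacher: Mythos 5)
Your proposal is correct and follows essentially the same route as the paper's own proof: $h$ is obtained by transporting $(f\times f')^!$ across $EZ^\vee$, the identification with $\varepsilon\, f^!\otimes f'^!$ comes from the K\"unneth one-dimensionality of $\text{Ext}^{m+m'-n-n'}_{C^*(M)\otimes C^*(M')}(C^*(N)\otimes C^*(N'),C^*(M)\otimes C^*(M'))$, the scalar is computed by evaluating on $\omega_N\times\omega_{N'}$ with the Bredon cross-product sign, and $k$ is pinned down by the change-of-rings isomorphism $\text{Ext}^*_{C^*(p\times p')}(C^*(q\times q'),-)$ exactly as in the paper's cube argument. The only cosmetic difference is that the paper defines $k$ as the lift of $(C_*(p)\otimes C_*(p'))^\vee\circ h$ and then checks the first square, whereas you define $k$ directly by conjugation and invoke uniqueness of the lift; these are equivalent.
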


\begin{rem}
%
Here $g^!\otimes g'^!$ denotes the $C^*(E)\otimes C^*(E')$-linear map defined
by
$$
(g^!\otimes g'^!)(a\otimes b)=(-1)^{\vert g'^!\vert\vert a\vert}g^!(a)\otimes g'^!(b).
$$
Therefore, Theorem~\ref{produitshriek} (2)  implies that
$$
H^*((g\times g')^!)(a\times b)=(-1)^{(m'-n')(n+\vert a\vert)}H^*(g^!)(a)\times H^*(g'^!)(b).
$$
The signs of~\cite[VI.14.3]{Bredongeometry} are different from that mentioned here.
\end{rem}

\begin{proof}[Proof of Theorem \ref{produitshriek}]
(1) By definition~\cite[Theorem 1 and p. 449]{F-T}, $(f\times f')^!$ is a generator of $\text{Ext}^{\leq m+m'-n-n'}_{C^*(M \times M')}(C^*(N\times N'),
C^*(M \times M'))$.
Let $h$ be the image of $(f\times f')^!$ by the composite of isomorphisms
$$\xymatrix@C15pt@R17pt{
\text{Ext}^{*}_{C^*(M \times M')}(C^*(N\times N'),
C^*(M \times M'))\ar[d]_{\text{Ext}^{*}_{Id}(Id, EZ^\vee)}^\cong\\
\text{Ext}^{*}_{C^*(M \times M')}(C^*(N\times N'),
\left(C_*(M)\otimes C_*(M')\right)^\vee)\\
\text{Ext}^{*}_{\left(C_*(M)\otimes C_*(M')\right)^\vee}(\left(C_*(N)\otimes C_*(N')\right)^\vee,\left(C_*(M)\otimes C_*(M')\right)^\vee ).
\ar[u]^{\text{Ext}^{*}_{EZ^\vee}(EZ^\vee,Id)}_\cong
}
$$
Since $f^!\otimes f'^!$ is a generator of
$$\text{Ext}^{\leq m+m'-n-n'}_{C^*(M)\otimes C^*(M')}(C^*(N)\otimes C^*(N'),C^*(M)\otimes C^*(M'))$$
$$\cong \text{Ext}^{\leq m-n}_{C^*(M)}(C^*(N),C^*(M))\otimes
\text{Ext}^{\leq m'-n'}_{C^*(M')}(C^*(N'),C^*(M')),$$
the image of $h$ by the composite of isomorphisms 
$$\xymatrix@C15pt@R17pt{
\text{Ext}^{*}_{\left(C_*(M)\otimes C_*(M')\right)^\vee}(\left(C_*(N)\otimes C_*(N')\right)^\vee,\left(C_*(M)\otimes C_*(M')\right)^\vee )
\ar[d]^{\text{Ext}^{*}_{\Theta}(\Theta,Id)}_\cong\\
\text{Ext}^{*}_{C^*(M)\otimes C^*(M')}(C^*(N)\otimes C^*(N'),
\left(C_*(M)\otimes C_*(M')\right)^\vee)\\
\text{Ext}^{*}_{C^*(M)\otimes C^*(M')}(C^*(N)\otimes C^*(N'),C^*(M)\otimes C^*(M')).
\ar[u]_{\text{Ext}^{*}_{Id}(Id, \Theta)}^\cong
}
$$
is an element $\varepsilon(f^!\otimes f'^!)$, where $\varepsilon$ is a non-zero constant
(2) In cohomology, (1) gives a commutative diagram 
$$
\xymatrix@C15pt@R25pt{
H^*(N\times N')\ar[r]^{H^*((f\times f')^!)} &H^*(M\times M')\\
H^*(N)\otimes H^*(N') \ar[u]^{\times}\ar[r]_{\varepsilon H^*(f^!)\otimes H^*(f'^!)}
&H^*(M)\otimes H^*(M'),  \ar[u]_{\times}
}
$$
where $\times$ is the cross product.
Therefore
\begin{multline*}
\omega_M\times\omega_{M'}=H^*((f\times f')^!)(\omega_N\times\omega_{N'})=\\
\varepsilon(-1)^{(m'-n')n}H^*(f^!)(\omega_{N})\times H^*(f'^!)(\omega_{N'})=\varepsilon(-1)^{(m'-n')n}\omega_M\times\omega_{M'}.
\end{multline*}

\noindent  (3) Consider the following cube in the derived category of $C^*(M\times M')$-modules
$$
{\footnotesize 
\xymatrix@C5pt@R15pt{
  & \left(C_*(X)\otimes C_*(X')\right)^\vee    \ar[rr]^{k}
  & & \left(C_*(E)\otimes C_*(E')\right)^\vee  \\
C^*(X\times X') \ar[ru]^{EZ^\vee} \ar[rr]_(0.6){(g\times g')^!} & & C^*(E\times E') \ar[ur]_(0.4){EZ^\vee} & \\
   & \left(C_*(N)\otimes C_*(N')\right)^\vee \ar@{->}'[u][uu]_(0.4){\left(C_*(q)\otimes C_*(q')\right)^\vee}   \ar@{->}'[r]_(0.7){h}[rr] 
   & & \left(C_*(M)\otimes C_*(M')\right)^\vee \ar[uu]_{\left(C_*(p)\otimes C_*(p')\right)^\vee} \\
C^*(N\times N') \ar[rr]_{(f\times f')^!} \ar[uu]^{C^*(q\times q')} \ar@{->}[ur]^{EZ^\vee} & & C^*(M\times M') \ar[ur]_{EZ^\vee} \ar[uu]_(0.4){C^*(p\times p')}&
}
}
$$
with $k$ defined below. 
Since
$${\text{Ext}^{*}_{C^*(p \times p')}(C^*(q\times q'),C^*(E \times E'))}:$$
$$
\text{Ext}^{*}_{C^*(E \times E')}(C^*(X\times X'),
C^*(E \times E'))\rightarrow
\text{Ext}^{*}_{C^*(M \times M')}(C^*(N\times N'),
C^*(E \times E'))
$$
is an isomorphism, it follows that the maps 
$$
\text{Ext}^{*}_{C^*(p \times p')}(C^*(q\times q'),
\left(C_*(E)\otimes C_*(E')\right)^\vee):$$
\begin{eqnarray*}
\text{Ext}^{*}_{C^*(E \times E')}(C^*(X\times X'),
\left(C_*(E)\otimes C_*(E')\right)^\vee) & \\
 & \hspace{-3cm}\rightarrow 
\text{Ext}^{*}_{C^*(M \times M')}(C^*(N\times N'),
\left(C_*(E)\otimes C_*(E')\right)^\vee)
\end{eqnarray*}
and
$$\text{Ext}^{*}_{\left(C_*(p)\otimes C_*(p')\right)^\vee}(\left(C_*(q)\otimes C_*(q')\right)^\vee,\left(C_*(E)\otimes C_*(E')\right)^\vee ):$$
\begin{align*}
\text{Ext}^{*}_{\left(C_*(E)\otimes C_*(E')\right)^\vee}(\left(C_*(X)\otimes C_*(X')\right)^\vee, 
\left(C_*(E)\otimes C_*(E')\right)^\vee ) \\
\rightarrow
\text{Ext}^{*}_{\left(C_*(M)\otimes C_*(M')\right)^\vee}(\left(C_*(N)\otimes C_*(N')\right)^\vee, 
\left(C_*(E)\otimes C_*(E')\right)^\vee )
\end{align*}
are also isomorphisms.
Let $k$ be the image of $\left(C_*(p)\otimes C_*(p')\right)^\vee\circ h$
by the inverse of the isomorphism 
$$\text{Ext}^{*}_{\left(C_*(p)\otimes C_*(p')\right)^\vee}(\left(C_*(q)\otimes C_*(q')\right)^\vee,\left(C_*(E)\otimes C_*(E')\right)^\vee ).$$

Since $EZ^\vee\circ (g\times g')^!$ and $k\circ EZ^\vee$ have the same image by
$$
\text{Ext}^{*}_{C^*(p \times p')}(C^*(q\times q'),
\left(C_*(E)\otimes C_*(E')\right)^\vee),$$
they coincide and hence we have proved the commutativity of the first square in (3).
For the second square in (3), the proof is the same using this time the following cube
in the derived category of $C^*(M)\otimes C^*(M')$-modules
$$
{\footnotesize
\xymatrix@C10pt@R15pt{
  & \left(C_*(X)\otimes C_*(X')\right)^\vee    \ar[rr]^{k}
  & & \left(C_*(E)\otimes C_*(E')\right)^\vee  \\
C^*(X)\otimes C^*(X') \ar[ru]^{\Theta} \ar[rr]_(0.6){\varepsilon g^!\otimes g'^!} & & C^*(E)\otimes C^*(E') \ar[ur]_-{\Theta} & \\
   & \left(C_*(N)\otimes C_*(N')\right)^\vee \ar@{->}'[u][uu]_(0.4){\left(C_*(q)\otimes C_*(q')\right)^\vee}   \ar@{->}'[r]_(0.7){h}[rr] 
   & & \left(C_*(M)\otimes C_*(M')\right)^\vee \ar[uu]_{\left(C_*(p)\otimes C_*(p')\right)^\vee} \\
C^*(N)\otimes C^*(N') \ar[rr]_{\varepsilon f^!\otimes f'^!} \ar[uu]_(0.7){C^*(q)\otimes C^*(q')} \ar@{->}[ur]^{\Theta} & & C^*(M)\otimes C^*(M') \ar[ur]_{\Theta} \ar[uu]_(0.4){C^*(p)\otimes C^*(p')}&
}
}
$$
\end{proof}

\section{Appendix: shriek maps and Poincar\'e duality}
In this section, we compare precisely the shriek map defined by Felix and Thomas in ~\cite[Theorem A]{F-T}
with various shriek maps defined by Poincar\'e duality. 

We first remark the cap products used in the body of the present paper. 
Let  $\text{--} \cap \sigma : C^{*}(M)\to C_{m-*}(M)$ be 
the cap product given in ~\cite[VI.5]{Bredongeometry}, where $\sigma \in C_{m}(M)$. 
Observe that $\text{--}\cap \sigma$ is defined by 
$$(\text{--} \cap \sigma)(x) = (-1)^{\vert m \vert\vert x\vert} x \cap \sigma.$$ 
By \cite[VI.5.1 Proposition (iii)]{Bredongeometry}, the map $-\cap \sigma $ 
is a morphism of left $C^{*}(M)$-modules. 
The sign $(-1)^{\vert m \vert\vert x\vert}$ makes $\text{--} \cap \sigma$ a left $C^*(M)$ linear map
in the sense that $f(xm)=(-1)^{\vert f \vert \vert x\vert} xf(m)$
as quoted in \cite[p. 44]{F-H-T}. 

We denote by $\sigma \cap \text{ --} : C^{*}(M)\to C_{m-*}$ the cap 
product described in \cite[\S 7]{Menichi}. The map $\sigma \cap -$ is 
a right $C^{*}(M)$-module map (\cite[Proposition 2.1.1]{Sweedler}). Moreover,  
we see that $x\cap \sigma =(-1)^{m|x|}\sigma \cap x $ in homology for any $x\in H^{*}(M)$.

\begin{prop}\label{shriek and Poincare duality}
Let $N$ and $M$ be two oriented Poincar\'e duality space of dimensions $n$ and $m$.
Let $[N]\in H_n(N)$, $[M]\in H_n(M)$ and $\omega_N\in H^n(N)$, $\omega_M\in H^m(M)$
such that the Kronecker products $<\omega_M,[M]>=1=<\omega_N,[N]>$.
Let $f:N\rightarrow M$ be a continous map.
Let $f^!$ be the unique element of $\text{\em Ext}^{m-n}_{C^*(M)}(C^*(N),C^*(M))$
such that $H(f^!)(\omega_N)=\omega_M$. Then 

1) The diagram in the derived category of right-$C^*(M)$ modules
$$
\xymatrix@C20pt@R20pt{
C^*(N)\ar[r]^{f^!}\ar[d]_{[N]\cap -}
& C^{*+m-n}(M)\ar[d]^{[M]\cap -}\\
C_{n-*}(N)\ar[r]_{C_*(f)}
& C_{n-*}(M)
}
$$
commutes up to the sign $(-1)^{m+n}$.

2) Let $\psi_N:C_*(N)\hookrightarrow C^*(N)^\vee$ be the canonical inclusion of the complex $C_*(N)$
into its bidual.
The diagram of left $H^*(M)$-modules 
$$
\xymatrix@C20pt@R15pt{
H^*(M)^\vee\ar[r]^{H^*(f^!)^\vee}
&H^{*+n-m}(N)^\vee\\
H_*(M)\ar[u]^{H(\psi_M)}
& H_{*+n-m}(N)\ar[u]_{H(\psi_N)}\\
H^{m-*}(M)\ar[r]_{H^{*}(f)}\ar[u]^{-\cap [M]}
&H^{m-*}(N)\ar[u]_{-\cap [N]}
}
$$
commutes up to the sign $(-1)^{n(m-n)}$.

3) Let $\int_N:H(A(N))\buildrel{\cong}\over\rightarrow H(C^*(N))$ be the isomorphism of algebras
induced by the natural equivalence between the rational singular cochains and the polynomial differential forms~\cite[Corollary 10.10]{F-H-T}.
Let $\theta_N:A(N)\buildrel{\simeq}\over\rightarrow A(N)^\vee$ be a morphism of $A(N)$-modules
such that the class of $\theta_N(1)$ is the fundamental class of $N$, $[N]\in H_n(A(N)^\vee)\cong
H_n(N;\mathbb{Q})$.
Let $f^!_A$ be the unique element of $\text{\em Ext}^{m-n}_{A(M)}(A(N),A(M))$
such that $H(f^!_A)(\int_N^{-1}\omega_N)=\int_M^{-1}\omega_M$.
Then in the derived category of $A(M)$-modules, the diagram
$$
\xymatrix@C20pt@R20pt{
A(M)^\vee\ar[r]^{(f^!_A)^\vee}
&A(N)^\vee\\
A(M)\ar[r]_{A(f)}\ar[u]^{\theta_M}
&A(N)\ar[u]_{\theta_N}
}
$$
commutes also with the sign $(-1)^{n(m-n)}$.
\end{prop}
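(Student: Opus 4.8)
The three parts share a common engine. By Theorem~\ref{thm:main_F-T} together with the isomorphism $H^q(N)\cong\text{Ext}^{q+m-n}_{C^*(M)}(C^*(N),C^*(M))$ recalled in the previous section, the shriek map $f^!$ is pinned down as the unique class of degree $m-n$ carrying $\omega_N$ to $\omega_M$, and the relevant Ext-modules are one-dimensional in the critical degree. The plan is to exploit this to reduce each square to a comparison of two scalars.

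For part 1), I would first recall that capping with a representative of $[M]$ gives a quasi-isomorphism of right $C^*(M)$-modules $[M]\cap-:C^*(M)\buildrel{\simeq}\over\rightarrow C_{m-*}(M)$ (Poincar\'e duality, already used in Theorem~\ref{thm:freeloopExt}). Hence both composites $([M]\cap-)\circ f^!$ and $C_*(f)\circ([N]\cap-)$ are morphisms $C^*(N)\to C_{n-*}(M)$ in $\D(\text{Mod-}C^*(M))$, i.e.\ elements of $\text{Ext}^{m-n}_{C^*(M)}(C^*(N),C_*(M))$. Transporting along the duality quasi-isomorphism identifies this group with $\text{Ext}^{m-n}_{C^*(M)}(C^*(N),C^*(M))\cong H^0(N)=\K$, so it is one-dimensional and the two composites are proportional. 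I would then determine the constant by passing to homology and evaluating on the fundamental class $\omega_N\in H^n(N)$: the clockwise composite yields $[M]\cap\omega_M$ and the counterclockwise one yields $(C_*f)_*([N]\cap\omega_N)$, both generators of $H_0$ pinned down by the normalisations $\langle\omega_M,[M]\rangle=\langle\omega_N,[N]\rangle=1$. Comparing them through the cap-product conventions recalled at the end of the section (in particular $x\cap\sigma=(-1)^{m\vert x\vert}\sigma\cap x$ and the Bredon sign in $(-\cap\sigma)(x)=(-1)^{\vert m\vert\vert x\vert}x\cap\sigma$) produces the constant $(-1)^{m+n}$.

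For part 2), I would deduce the rectangle from part 1) by dualising, the key algebraic input being the projection formula. Since $f^!$ is $C^*(M)$-linear via $f^*$, one has $H^*(f^!)(\beta\cup f^*x)=H^*(f^!)(\beta)\cup x$. Writing the vertical caps through the Kronecker pairing, the commutativity of the rectangle becomes equivalent to the identity $\langle H^*(f^!)(\xi),[M]\rangle=\pm\langle\xi,[N]\rangle$ for $\xi\in H^n(N)$; since $H^n(N)=\K\omega_N$ by Poincar\'e duality and $H^*(f^!)(\omega_N)=\omega_M$, this is exactly the defining normalisation of $f^!$. The bidual inclusion $\psi_N(c)(\varphi)=(-1)^{\vert c\vert\vert\varphi\vert}\varphi(c)$ and the cap-cup adjunction $\langle\gamma,y\cap\sigma\rangle=\pm\langle\gamma\cup y,\sigma\rangle$ would then account for the sign $(-1)^{n(m-n)}$. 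For part 3), the plan is to transport part 2) to the commutative cochain functor $A(-)$: as explained in Step~1 of the proof of Theorem~\ref{rational iso of Felix-Thomas Gorenstein}, every Félix--Thomas statement valid for $C^*(X)$ remains valid for $A(X)$ through the natural weak equivalence, so $f^!_A$ corresponds to $f^!$ and is characterised by $H(f^!_A)(\int_N^{-1}\omega_N)=\int_M^{-1}\omega_M$. The $A(N)$-linear quasi-isomorphism $\theta_N$, whose value $\theta_N(1)$ represents $[N]$, plays the role of the composite $\psi_N\circ(-\cap[N])$ at the level of forms; thus the square in 3) is the image of the rectangle in 2) under these identifications, and commutes up to the same sign $(-1)^{n(m-n)}$, which I would make precise using the compatibility of $\int$ with cap products in homology.

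The hard part will be the sign bookkeeping, which is the real content of the statement: three different conventions are in play (Bredon's $-\cap\sigma$, the right-module $\sigma\cap-$, and the bidual $\psi$ with its Koszul sign), together with the Koszul signs from $\K$-dualisation and the degree shifts $m-n$. Keeping these coherent so that they assemble to exactly $(-1)^{m+n}$ in 1) and $(-1)^{n(m-n)}$ in 2) and 3) will require careful tracking at the chain level, whereas the conceptual steps (one-dimensionality of the Ext-modules, the Poincar\'e duality quasi-isomorphism, the projection formula, and the uniqueness of the shriek map) are comparatively routine.
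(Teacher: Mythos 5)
Your proposal follows essentially the same route as the paper's proof: in part 1) you reduce to the one-dimensional $\text{Ext}$-group (the paper invokes \cite[Lemma 1]{F-T}) and pin the scalar by evaluating in homology on the generator using the relation $x\cap\sigma=(-1)^{m|x|}\sigma\cap x$; in part 2) you use $H^*(M)$-linearity of $H^*(f^!)$ and the normalisation $H(f^!)(\omega_N)=\omega_M$ to reduce to a check on generators with the Koszul sign of $\psi$; and in part 3) you transport part 2) to $A_{PL}$ via $\int_N$ and $\theta_N$, which is precisely the paper's cube argument. The only thing you leave implicit that the paper makes explicit is the characterisation lemma (Lemma~\ref{caracterisation shriek map cohomology}) used to verify that the $A$-level shriek map matches the $C^*$-level one, but this is exactly the "compatibility of $\int$ with cap products" you flag.
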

\begin{rem}
Part 1) of the previous proposition is already in~\cite[p. 419]{F-T} but without sign and with left-$C^*(M)$ modules. In particular, they should have defined their maps $\text{--} \cap [M]:C^*(M)\rightarrow 
C_{m-*}(M)$ by $(\text{--} \cap [M])(x)=(-1)^{\vert x\vert m}  x\cap [M]$ in order to have a left-$C^*(M)$ linear
map~\cite[p. 283]{Friedman}. 

Note that diagram of part 2) of Proposition \ref{shriek and Poincare duality} is commutative at 
the cochain level as our proof below shows it.
\end{rem}

\begin{proof}
1) By~\cite[Lemma 1]{F-T}, it suffices to show that the diagram commutes in homology on the generator
$\omega_M\in H^m(M)$. Let $\varepsilon_M:H_0(M)\buildrel{\cong}\over\rightarrow\K$ be the augmentation.
It is well-known that $\varepsilon_M(\omega_M\cap [M])=<\omega_M,[M]>=1$.
Therefore  $\varepsilon_M([M]\cap \omega_M)=(-1)^m\varepsilon_M(\omega_M\cap [M])=(-1)^m$.
On the other hand, $\varepsilon_N([N]\cap H(f^!)(\omega_M))=\varepsilon_N([N]\cap \omega_N)=(-1)^n$.

2) Since $H^*(f^!)$ is right $H^*(M)$-linear, its dual is left $H^*(M)$-linear. 
By $H^*(M)$-linearity, it suffices to check that the diagram commutes on $1\in H^*(M)$.
By~\cite[7.1]{F-T-VP} or~\cite[Property 57 i)]{Menichi}, $\psi_M:C_*(M)\hookrightarrow C^*(M)^\vee$
is defined by $\psi_M(c)(\varphi)=(-1)^{\vert c\vert\vert\varphi\vert}\varphi(c)$
for $c\in C_*(M)$ and $\varphi\in C^*(M)$.
Therefore $\psi_N([N])(\omega_N)=(-1)^{n^2}$. And
\begin{align*}
\left((f^!)^\vee\circ\psi_M\right)([M])(\omega_N)=(-1)^{m\vert f^!\vert}\left(\psi_M([M])\circ f^!\right)(\omega_N)\\
=(-1)^{m^2-mn}\psi_M([M])(\omega_M)=(-1)^{mn}.
\end{align*}
So $((f^!)^\vee\circ\psi_M)([M])=(-1)^{n(m-n)}\psi_N([N])$. 
Observe that the map $\psi_M$ is left $C^*(M)$-linear; see \cite[p. 250]{F-T-VP}. 

3) The isomorphism
$ \text{Ext}^{q}_{A(M)}(A(M),A(N)^\vee)\cong H_{-q}(A(N)^\vee)$
maps any element $\varphi$ to $ H(\varphi)(1)$.
Therefore it suffices to check that the diagram commutes in homology on $1\in H^0(A(M))$.
Consider the cube
$$
{\footnotesize
\xymatrix@C20pt@R15pt{
  & H(A(M)^\vee)    \ar[rr]^{H((f^!_A)^\vee)}
  & & H(A(N)^\vee)  \\
H^*(M)^\vee \ar[ru]^{\int_M^\vee} \ar[rr]_(0.6){H((f^!)^\vee)} & & H^*(N)^\vee \ar[ur]_-{\int_N^\vee} & \\
   & H(A(M)) \ar@{->}'[u][uu]_(0.4){H(\theta_M)}   \ar@{->}'[r]_(0.7){H(A(f))}[rr] \ar@{->}[dl]_{\int_M}
   & & H(A(N)) \ar[uu]_{H(\theta_N)} \ar@{->}[dl]^{\int_N}\\
H^*(M) \ar[rr]_{H^*(f)} \ar[uu]^{H(\psi_M)\circ [M]\cap -}  & & 
H^*(N) \ar[uu]_(0.7){H(\psi_N)\circ [N]\cap -}&
}
}
$$
The bottom face commutes by naturality of the isomorphism $\int_N$.
The left face and right faces commute on $1$ by definition of $\theta_M(1)$ and $\theta_N(1)$.
The top face is the dual of the following square
$$
\xymatrix@C35pt@R20pt{
H(A(N))\ar[r]^{H^*(f^!_A)}\ar[d]_{\int_N}
& H(A(M))\ar[d]^{\int_M}\\
H(C^ *(N))\ar[r]_{H^*(f^!)}
& H(C^*(M))\ar[r]_-{H(\psi_M([M]))}
& \K
}
$$
Since $H(\psi_M([M]))\circ H^*(f^!)\circ \int_N=H(\psi_M([M]))\circ \int_M\circ H^*(f^!_A)$, by
Lemma~\ref{caracterisation shriek map cohomology} below, the square is commutative.
Therefore the top face of the previous cube is commutative.

The front face is exactly the diagram in part 2) of this proposition.
Therefore the back face commutes on $1$ up to the same sign $(-1)^{n(m-n)}$.

\end{proof}
The following lemma is a cohomological version of~\cite[Lemma 1]{F-T}.

\begin{lem}\label{caracterisation shriek map cohomology}
Let $P$ be a right $H^*(M)$-module.
Then the map $$\Psi:\text{\em Hom}_{H^*(M)}^q(P,H^*(M))\rightarrow (P^{q-m})^\vee$$
mapping $\varphi$ to the composite $H(\psi_M([M]))\circ\varphi$ is an isomorphism.
\end{lem}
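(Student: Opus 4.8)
The plan is to recognize $\Psi$ as the concatenation of two isomorphisms: the Poincar\'e duality isomorphism $H^*(M)\cong H^*(M)^\vee$ coming from the nondegenerate cup product pairing, and the tensor--hom adjunction. The point of this route is that it works for an \emph{arbitrary} right $H^*(M)$-module $P$, with no finiteness or freeness hypothesis and no need to resolve $P$: it is exactly the self-injectivity of the Frobenius (Poincar\'e duality) algebra $H^*(M)$ that is being exploited, repackaged as an adjunction so that the five-lemma argument over a free presentation can be avoided.

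First I would set $A:=H^*(M)$, a finite dimensional graded commutative algebra, and $\e:=H(\psi_M([M])):A\to \K$, the degree $-m$ functional that evaluates the top cohomology against the fundamental class. Poincar\'e duality is precisely the statement that the bilinear form $(x,y)\mapsto \e(xy)$ is nondegenerate; equivalently, the map $\theta:A\to A^\vee$, $\theta(a)(x)=\e(xa)$, is an isomorphism of right $A$-modules of degree $-m$. I would check the right $A$-linearity of $\theta$ directly from the right module structure on $A^\vee=\text{Hom}_\K(A,\K)$, using graded commutativity of $A$ to move the action across and keeping track of the Koszul signs (this is where the sign $(-1)^{\vert c\vert\vert\varphi\vert}$ built into $\psi_M$ at the beginning of this section enters).

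Next I would compose. Post-composition with $\theta$ gives an isomorphism $\theta_*:\text{Hom}_A(P,A)\buildrel{\cong}\over\rightarrow \text{Hom}_A(P,A^\vee)$, and the tensor--hom adjunction identifies $\text{Hom}_A(P,\text{Hom}_\K(A,\K))$ with $\text{Hom}_\K(P\otimes_A A,\K)=P^\vee$. The only thing left to verify is that the resulting composite $\text{Hom}_A(P,A)\to P^\vee$ agrees with $\Psi$, and this is immediate by evaluation at the unit $1\in A$: the adjunction sends a map $\psi$ to $p\mapsto \psi(p)(1)$, so the composite carries $\varphi$ to $p\mapsto \theta(\varphi(p))(1)=\e(\varphi(p))=(\e\circ\varphi)(p)$, which is exactly $\Psi(\varphi)$. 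Since both $\theta_*$ and the adjunction are isomorphisms, so is $\Psi$.

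The main obstacle is bookkeeping rather than conceptual: confirming that $\theta$ is genuinely right $A$-linear of the correct degree $-m$, and that the graded adjunction introduces no unexpected sign. The degree shift by $m$ recorded in the target $(P^{q-m})^\vee$ is accounted for by $\e$ having degree $-m$. I would reconcile all signs against the conventions fixed at the start of this section (the definitions of $\text{--}\cap\sigma$, $\sigma\cap\text{--}$ and $\psi_M$) to be certain the statement is the cohomological mirror of \cite[Lemma 1]{F-T} that it is advertised to be.
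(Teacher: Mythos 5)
Your proposal is correct and follows essentially the same route as the paper: both factor $\Psi$ as post-composition with the Poincar\'e duality isomorphism $H^*(M)\buildrel{\cong}\over\rightarrow H^*(M)^\vee$ (which the paper writes as $H(\psi_M)\circ [M]\cap -$ and you write as the Frobenius form $\theta(a)(x)=\e(xa)$ — the same map up to sign) followed by the tensor--hom adjunction $\text{Hom}_{A}(P,\text{Hom}(A,\K))\cong \text{Hom}_\K(P\otimes_A A,\K)\cong P^\vee$. Your extra verification that the composite agrees with $\Psi$ by evaluating at $1\in A$ is a welcome explicit check that the paper leaves implicit.
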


\begin{rem}
This lemma holds also with $\text{Ext}$ instead of $\text{Hom}$ and with $H_*(M)$, $C_*(M)$
or~\cite[Lemma 1]{F-T} $C^*(M)$ instead of $H^*(M)$.
\end{rem}

We give the proof of Lemma~\ref{caracterisation shriek map cohomology}, since we don't understand all the proof
of~\cite[Lemma 1]{F-T}.
\begin{proof}
Since the form $H(\psi_M([M])):H(C^*(M))\rightarrow\K$, $\alpha\mapsto (-1)^{m\vert\alpha\vert}<\alpha,[M]>$ coincides with
$
H^m(C^*(M))\buildrel{[M]\cap -}\over\rightarrow H_0(C^*(M))\buildrel{\varepsilon}\over\cong \K
$,
the map $\Psi$ coincides with the composite of
$$\text{Hom}_{H^*(M)}^q(P,H(\psi_M)\circ [M]\cap -):
\text{Hom}_{H^*(M)}^q(P,H^*(M))\buildrel{\cong}\over\rightarrow
\text{Hom}_{H^*(M)}^{q-m}(P,H^*(M)^\vee)$$
and
$$
\text{Hom}_{H^*(M)}(P,\text{Hom}(H^*(M),\K))\cong
\text{Hom}_\K(P\otimes_{H^*(M)} H^*(M),\K)\cong \text{Hom}_\K(P,\K).
$$
\end{proof}

\begin{cor}\label{shriek diagonal}
Let $N$ be an oriented Poincar\'e duality space.
Let $[N]\in H_n(N)$ and $\omega_N\in H^n(N)$
such that the Kronecker product $<\omega_N,[N]>=1$.
Let $\Delta^!$ be the unique element of $\text{\em Ext}^{n}_{C^*(N\times N)}(C^*(N),C^*(N\times N))$
such that $H(\Delta^!)(\omega_N)=\omega_N\times \omega_N$. 
(This is the $\Delta^!$ considered in all this paper, since we orient $N\times N$ with the cross product
$\omega_N\times \omega_N$: See part (2) of Theorem~\ref{produitshriek}).
Let $\Delta^!_A$ be the unique element of $\text{Ext}^{n}_{A(N\times N)}(A(N),A(N\times N))$
such that $H(\Delta^!_A)(\int_N^{-1}\omega_N)=\int_{N\times N}^{-1}\omega_N\times \omega_N$. 
Then in the case of $\Delta^!$ and of $\Delta^!_A$,
all the diagrams of Proposition~\ref{shriek and Poincare duality}
commute exactly\footnote{As we see in the proof, this is lucky!}.
\end{cor}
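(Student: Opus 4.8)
The plan is to specialize Proposition~\ref{shriek and Poincare duality} to the diagonal $f=\Delta:N\rightarrow N\times N$, so that the source is $N$ of dimension $n$ and the target is $M=N\times N$ of dimension $m=2n$. The single point that blocks a verbatim application is the normalization of the fundamental class: Proposition~\ref{shriek and Poincare duality} assumes $\langle\omega_M,[M]\rangle=1$, whereas here we are forced, by part (2) of Theorem~\ref{produitshriek}, to orient $N\times N$ by the cross products $\omega_N\times\omega_N$ and $[N]\times[N]$. The Kronecker pairing formula $\langle\alpha\times\beta,a\times b\rangle=(-1)^{|\beta||a|}\langle\alpha,a\rangle\langle\beta,b\rangle$ of~\cite[VI.5.4 Theorem]{Bredongeometry} then gives
$$
\langle\omega_N\times\omega_N,[N]\times[N]\rangle=(-1)^{n^2}=(-1)^n,
$$
which is in general not equal to $1$. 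So instead of quoting the proposition I would rerun each of its three sign computations, feeding in the corrected value $\langle\omega_M,[M]\rangle=(-1)^n$ in place of $1$, and check that the signs become trivial.

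For part 1) I would compute the augmentations of the two composites on $\omega_N$. The composite through $\Delta^!$ gives $\varepsilon\big([N\times N]\cap(\omega_N\times\omega_N)\big)=(-1)^{m}\langle\omega_N\times\omega_N,[N]\times[N]\rangle=(-1)^{2n}(-1)^n=(-1)^n$, while the composite through $[N]\cap-$ gives $\varepsilon\big(\Delta_*([N]\cap\omega_N)\big)=\varepsilon([N]\cap\omega_N)=(-1)^n$. The two values coincide, so the square commutes exactly rather than up to $(-1)^{m+n}=(-1)^n$. For part 2) I would reexamine $\big((\Delta^!)^\vee\circ\psi_{N\times N}\big)([N]\times[N])(\omega_N)$, which equals $(-1)^{mn}\langle\omega_M,[M]\rangle=(-1)^{2n^2}(-1)^n=(-1)^n$; since $\psi_N([N])(\omega_N)=(-1)^{n^2}=(-1)^n$ as well, the proposition's sign $(-1)^{n(m-n)}=(-1)^n$ is cancelled and the diagram commutes on the nose.

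For part 3) I would introduce no new sign bookkeeping. The cube used to prove part 3) of Proposition~\ref{shriek and Poincare duality} has the diagram of part 2) as its front face, its bottom commuting by naturality of $\int_N$, and its left and right faces commuting on $1$ by the very definitions $\theta_N(1)=[N]$ and $\theta_{N\times N}(1)=[N]\times[N]$; its top face commutes by Lemma~\ref{caracterisation shriek map cohomology}. Hence the back face, which is the diagram of part 3) for $\Delta^!_A$, commutes up to exactly the sign occurring on the front face, namely the $+1$ just established.

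The hard part will be the sign accounting: I must confirm that the factor $(-1)^n=\langle\omega_N\times\omega_N,[N]\times[N]\rangle$ enters exactly once in each relative-sign computation, so that it cancels the proposition's sign $(-1)^{m+n}$ in part 1) and $(-1)^{n(m-n)}$ in parts 2) and 3), both of which equal $(-1)^n$ once $m=2n$. A secondary point to verify is that the normalizations $H(\Delta^!)(\omega_N)=\omega_N\times\omega_N$ and $H(\Delta^!_A)(\int_N^{-1}\omega_N)=\int_{N\times N}^{-1}(\omega_N\times\omega_N)$ fixed in the statement are precisely the instances $H(f^!)(\omega_N)=\omega_M$ used in Proposition~\ref{shriek and Poincare duality}, so that the shriek maps being compared are indeed those named in the corollary.
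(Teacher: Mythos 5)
Your proposal is correct and follows essentially the same route as the paper: the whole point is that $\langle\omega_N\times\omega_N,[N]\times[N]\rangle=(-1)^{|\omega_N||[N]|}=(-1)^n$ rather than $1$, and this extra factor cancels the signs $(-1)^{m+n}$ and $(-1)^{n(m-n)}$ of Proposition~\ref{shriek and Poincare duality} once $m=2n$. The paper simply multiplies the proposition's signs by this correction factor instead of rerunning each computation, but the content is identical.
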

\begin{proof}
By~\cite[VI.5.4 Theorem]{Bredongeometry}, the cross products in homology and cohomology and the Kronecker product
satisfy
$$
<\omega_N\times \omega_N, [N]\times [N]>=(-1)^{\vert \omega_N\vert\vert [N]\vert} <\omega_N,[N]><\omega_N,[N]>
=(-1)^n.
$$
Therefore, by Proposition~\ref{shriek and Poincare duality}, the diagram of part 1) commutes up to the sign
$(-1)^{2n+n}(-1)^n=+1$.
The diagrams of part 2) and (3) commutes up to the sign
$(-1)^{n(2n-n)}(-1)^n=+1$.
\end{proof}

\section{Appendix: Signs and degree shifting of products}

Let $A$ be a graded vector space equipped with a morphism
$\mu_A:A\otimes A\rightarrow A$ of degree $\vert\mu_A\vert$.
Let $B$ be another graded vector space equipped with a morphism
$\mu_B:B\otimes B\rightarrow B$ of degree $\vert\mu_B\vert$.

\begin{defn}\label{definition associative and commutative}
The multiplication $\mu_A$ is {\it associative} if $\mu_A\circ(\mu_A\otimes 1)=(-1)^{
  \vert\mu_A\vert}\mu_A\circ(1\otimes\mu_A)$.
The multiplication $\mu_A$ is {\it commutative} if $\mu_A(a\otimes b)=(-1)^{
  \vert\mu_A\vert+\vert a\vert\vert b\vert}\mu_A(b\otimes a)$ for all
$a$, $b\in A$.
A linear map $f:A\rightarrow B$ is a {\it morphism
of algebras of degree $\vert f\vert$ }if
$f\circ\mu_A=(-1)^{\vert f\vert\vert\mu_A\vert}\mu_B\circ(f\otimes f)$
(In particular $\vert\mu_A\vert=\vert\mu_B\vert+\vert f\vert$).
\end{defn}
\begin{prop}
{\em (i)} The composite $g\circ f$ of two morphisms of algebras $f$ and $g$ of degrees
$\vert f\vert$ and $\vert g\vert$ is a morphism of algebras of degree
$\vert f\vert+\vert g\vert$. The inverse $f^{-1}$ of an isomorphism $f$ of algebras of degree
$\vert f\vert$ is a morphism of algebras of degree
$-\vert f\vert$.

{\em (ii)} Let $f:A\rightarrow B$ be an isomorphism of algebras of degree
$\vert f\vert$. Then $\mu_A$ is commutative if and only if $\mu_B$ is
commutative. And $\mu_A$ is associative if and only if $\mu_B$ is
associative.
\end{prop}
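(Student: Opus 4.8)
The plan is to treat the entire proposition as an exercise in the Koszul sign rule; nothing beyond the three definitions above and careful bookkeeping is needed. The single computational tool I would isolate first is the graded interchange law for tensor products of homogeneous maps,
\[
(g_1\otimes g_2)\circ(f_1\otimes f_2)=(-1)^{|f_1|\,|g_2|}\,(g_1\circ f_1)\otimes(g_2\circ f_2),
\]
and I would record the two specializations I actually use: with $g_1=g_2=g$, $f_1=f_2=f$ this gives $(g\otimes g)\circ(f\otimes f)=(-1)^{|f|\,|g|}(g\circ f)\otimes(g\circ f)$, and with $g_1=g_2=f$, $f_1=f_2=f^{-1}$ it gives $(f\otimes f)\circ(f^{-1}\otimes f^{-1})=(-1)^{|f|}\,\mathrm{id}$, since $|f|\,|f^{-1}|=-|f|^2\equiv|f|\pmod 2$. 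I would also keep the degree constraint $|\mu_A|=|\mu_B|+|f|$ from the definition in force throughout.

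For the composite in (i), I would chain the two defining relations: starting from $(g\circ f)\circ\mu_A=g\circ(f\circ\mu_A)$, substitute $f\circ\mu_A=(-1)^{|f||\mu_A|}\mu_B\circ(f\otimes f)$, then $g\circ\mu_B=(-1)^{|g||\mu_B|}\mu_C\circ(g\otimes g)$, and finally apply the interchange law to $(g\otimes g)\circ(f\otimes f)$. The accumulated sign is $(-1)^{|f||\mu_A|+|g||\mu_B|+|f||g|}$, and substituting $|\mu_B|=|\mu_A|-|f|$ collapses the exponent to $(|f|+|g|)|\mu_A|$, which is exactly the sign required for $g\circ f$ to be a morphism of degree $|f|+|g|$. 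For the inverse I would solve the defining relation as $\mu_A=(-1)^{|f||\mu_A|}f^{-1}\circ\mu_B\circ(f\otimes f)$, precompose with $f^{-1}\otimes f^{-1}$, and use the second specialization above; after substituting $|\mu_A|\equiv|\mu_B|+|f|$ and $|f|^2\equiv|f|$ the resulting sign matches $(-1)^{|f^{-1}||\mu_B|}$ with $|f^{-1}|=-|f|$.

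For (ii) I would observe that each equivalence needs only one implication, since the reverse is obtained by replacing $f$ with $f^{-1}$, which is again an algebra isomorphism by part (i). For commutativity I would write arbitrary $b,b'\in B$ as $b=f(a)$, $b'=f(a')$, rewrite $\mu_B(f(a)\otimes f(a'))$ using $f(a)\otimes f(a')=(-1)^{|f||a|}(f\otimes f)(a\otimes a')$ together with $\mu_B\circ(f\otimes f)=(-1)^{|f||\mu_A|}f\circ\mu_A$, apply the commutativity of $\mu_A$, and compare the result to the analogous expression for $\mu_B(f(a')\otimes f(a))$. Collecting signs and substituting $|b|=|a|+|f|$, $|b'|=|a'|+|f|$, $|\mu_B|=|\mu_A|-|f|$ reproduces exactly the factor $(-1)^{|\mu_B|+|b||b'|}$.

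The associativity half of (ii) is the step I expect to be the genuine obstacle, not conceptually but in sign control: transporting the identity $\mu_A\circ(\mu_A\otimes 1)=(-1)^{|\mu_A|}\mu_A\circ(1\otimes\mu_A)$ through $f$ requires evaluating both sides on a threefold tensor $f(a)\otimes f(a')\otimes f(a'')$ and tracking the Koszul signs produced by passing $f^{\otimes 3}$ past the two-stage multiplications, where factors such as $(-1)^{|f||a|}$ and $(-1)^{|f|(|a'|+|a''|)}$ appear asymmetrically on the two sides. I would organize this by expressing $\mu_B=(-1)^{|f||\mu_A|}f\circ\mu_A\circ(f^{-1}\otimes f^{-1})$ and computing $\mu_B\circ(\mu_B\otimes 1)$ and $\mu_B\circ(1\otimes\mu_B)$ as maps on $B^{\otimes 3}$, repeatedly invoking the interchange law so that all internal occurrences of $f^{-1}\circ f$ cancel; the bookkeeping then shows the two composites differ precisely by $(-1)^{|\mu_B|}$, using once more $|\mu_A|=|\mu_B|+|f|$ and $|f|^2\equiv|f|\pmod 2$. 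Part (i) is available throughout to supply the reverse implications, so it need not be reproved inside (ii).
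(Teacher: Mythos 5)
The paper states this proposition without any proof (it is treated as a routine exercise), so there is no argument of the authors to compare against; your direct verification via the Koszul sign rule and the interchange law $(g_1\otimes g_2)\circ(f_1\otimes f_2)=(-1)^{\vert f_1\vert\vert g_2\vert}(g_1\circ f_1)\otimes(g_2\circ f_2)$ is exactly the intended argument, and the bookkeeping you describe checks out: in (i) the exponent $\vert f\vert\vert\mu_A\vert+\vert g\vert\vert\mu_B\vert+\vert f\vert\vert g\vert$ reduces to $(\vert f\vert+\vert g\vert)\vert\mu_A\vert$ via $\vert\mu_B\vert=\vert\mu_A\vert-\vert f\vert$, and in the commutativity half of (ii) the residual exponent $\vert f\vert\vert a\vert+\vert f\vert\vert a'\vert+\vert\mu_A\vert+\vert a\vert\vert a'\vert$ agrees with $\vert\mu_B\vert+\vert b\vert\vert b'\vert$ modulo $2$. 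One small slip: solving $\mu_B\circ(f\otimes f)=(-1)^{\vert f\vert\vert\mu_A\vert}f\circ\mu_A$ for $\mu_B$ using your own identity $(f\otimes f)\circ(f^{-1}\otimes f^{-1})=(-1)^{\vert f\vert}\mathrm{id}$ gives $\mu_B=(-1)^{\vert f\vert\vert\mu_A\vert+\vert f\vert}f\circ\mu_A\circ(f^{-1}\otimes f^{-1})$, not $(-1)^{\vert f\vert\vert\mu_A\vert}$. This is harmless for the associativity claim, since $\mu_B$ enters both $\mu_B\circ(\mu_B\otimes 1)$ and $\mu_B\circ(1\otimes\mu_B)$ quadratically and a global scalar cancels; a cleaner route avoiding $f^{-1}$ entirely is to compute $\mu_B\circ(\mu_B\otimes 1)\circ f^{\otimes 3}=(-1)^{\vert f\vert\vert\mu_A\vert}f\circ\mu_A\circ(\mu_A\otimes 1)$ and $\mu_B\circ(1\otimes\mu_B)\circ f^{\otimes 3}=(-1)^{\vert f\vert\vert\mu_B\vert}f\circ\mu_A\circ(1\otimes\mu_A)$, whence associativity of $\mu_A$ transports to that of $\mu_B$ with the correct sign $(-1)^{\vert\mu_B\vert}$ because $\vert f\vert\vert\mu_A\vert+\vert\mu_A\vert+\vert f\vert\vert\mu_B\vert\equiv\vert\mu_B\vert\pmod 2$.
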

\begin{ex}\label{lem:important_degree}
(i) (Compare with~\cite[Remark 3.6
  and proof of Proposition 3.5]{Tamanoi:cap products})
Let $A$ be a lower graded vector space equipped with a morphism 
$\mu_A:A\otimes A\rightarrow A$ of lower degree $-d$ associative and
commutative in the sense of definition~\ref{definition associative and
  commutative}. Denote by ${\mathbb A}=s^{-d}A$ the
$d$-desuspension~\cite[p. 41]{F-H-T}
of $A$: ${\mathbb A}_i=A_{i+d}$.
Let $\mu_{\mathbb{A}}:{\mathbb{A}}\otimes {\mathbb{A}}\rightarrow
{\mathbb{A}}$ the morphism of degree $0$
given by $\mu_{\mathbb{A}}(a\otimes b)=(-1)^{d(d+p)}\mu_A(a\otimes b)$
for $a\in A_p$ and $b\in A_q$.
Then the map $s^{-d}:A\rightarrow \mathbb{A}$, $a\mapsto a$,
is an isomorphism of algebras of lower degree $-d$ and $\mu_{\mathbb A}$ is
commutative and associative in the usual graded sense.

(ii) Let $f:A\rightarrow B$ be a morphism of algebras of degree $\vert f\vert$
in the sense of definition~\ref{definition associative and commutative} with respect to the multiplications $\mu_A$ and $\mu_B$.
Then the composite
$\mathbb{A}\buildrel{(s^{\vert\mu_A\vert})^{-1}}\over\rightarrow A
\buildrel{f}\over\rightarrow B\buildrel{s^{\vert\mu_B\vert}}\over\rightarrow \mathbb{B}
$ is a morphism of algebras of degree $0$ with respect to the
multiplications $\mu_{\mathbb{A}}$ and $\mu_{\mathbb{B}}$.
\end{ex}

The following proposition explains that
the generalized cup product (Definition~\ref{cup product Hochschild}) is natural with respect to morphism of
algebras of any degree
(Definition~\ref{definition associative and commutative}).
\begin{prop}\label{naturality generalized cup product}
Let $A$ be an algebra. Let $M$ and $N$ be two $A$-bimodules.
Let $\bar{\mu}_M\in\text{\em Ext}_{A\otimes A^{op}}^*(M\otimes_A
M,M)$ and 
$\bar{\mu}_N\in\text{\em Ext}_{A\otimes A^{op}}^*(N\otimes_A
N,N)$.
Let $f\in\text{\em Ext}_{A\otimes A^{op}}^*(M,N)$ such that in the derived
category of $A$-bimodules
\begin{equation}
f\circ \bar{\mu}_M=(-1)^{\vert f\vert\vert
  \bar{\mu}_M\vert}\bar{\mu}_N\circ (f\otimes_A f)
\end{equation}
Then $HH^*(A,f):HH^*(A,M)\rightarrow HH^*(A,N)$ is a morphism of
algebras of degree $\vert f\vert$.
\end{prop}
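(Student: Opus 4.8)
The plan is to unwind the definition of the generalized cup product and reduce the statement to three standard facts: the covariant functoriality of $HH^*(A,-)=\text{Ext}^*_{A\otimes A^{op}}(A,-)$ on the derived category of $A$-bimodules, the naturality of the canonical map $\otimes_A$ of~\cite[p. 283]{Menichi_BV_Hochschild}, and the hypothesis relating $f$, $\bar\mu_M$ and $\bar\mu_N$. Recall from Definition~\ref{cup product Hochschild} that, regarding $\bar\mu_M$ as a morphism $M\otimes_A M\to M$ of degree $\vert\bar\mu_M\vert$ in the derived category (the composite $HH^*(A,\bar\mu_M)\circ HH^*(A,\varepsilon)^{-1}$ of Definition~\ref{cup product Hochschild}(2) is exactly $HH^*$ applied to this derived morphism), the generalized cup product on $HH^*(A,M)$ is the composite $\cup_M=HH^*(A,\bar\mu_M)\circ\otimes_A$, and similarly $\cup_N=HH^*(A,\bar\mu_N)\circ\otimes_A$. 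According to Definition~\ref{definition associative and commutative}, proving the proposition amounts to establishing the single identity
\[
HH^*(A,f)\circ\cup_M=(-1)^{\vert f\vert\vert\bar\mu_M\vert}\,\cup_N\circ\bigl(HH^*(A,f)\otimes HH^*(A,f)\bigr).
\]

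First I would rewrite the left-hand side. Starting from $HH^*(A,f)\circ\cup_M=HH^*(A,f)\circ HH^*(A,\bar\mu_M)\circ\otimes_A$, functoriality (covariance, with no sign, since $HH^*(A,-)$ merely post-composes a representing cocycle) gives $HH^*(A,f)\circ HH^*(A,\bar\mu_M)=HH^*\bigl(A,f\circ\bar\mu_M\bigr)$. Next I would apply the hypothesis $f\circ\bar\mu_M=(-1)^{\vert f\vert\vert\bar\mu_M\vert}\bar\mu_N\circ(f\otimes_A f)$ inside $HH^*(A,-)$ and split once more by functoriality, obtaining
\[
HH^*(A,f)\circ\cup_M=(-1)^{\vert f\vert\vert\bar\mu_M\vert}\,HH^*(A,\bar\mu_N)\circ HH^*(A,f\otimes_A f)\circ\otimes_A.
\]

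The crucial step is then the naturality of the canonical map $\otimes_A$, namely $HH^*(A,f\otimes_A f)\circ\otimes_A=\otimes_A\circ\bigl(HH^*(A,f)\otimes HH^*(A,f)\bigr)$ as maps. This holds with exactly the Koszul convention carried by the tensor product of morphisms of nonzero degree: at the level of representing cocycles $\xi,\eta$ the interchange law reads $(f\otimes_A f)\circ(\xi\otimes_A\eta)=(-1)^{\vert f\vert\vert\xi\vert}(f\circ\xi)\otimes_A(f\circ\eta)$, and the same factor $(-1)^{\vert f\vert\vert\xi\vert}$ is precisely the sign in $\bigl(HH^*(A,f)\otimes HH^*(A,f)\bigr)$, so no correction term survives. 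Substituting this naturality identity and recognizing $HH^*(A,\bar\mu_N)\circ\otimes_A=\cup_N$ yields the desired identity, and hence that $HH^*(A,f)$ is a morphism of algebras of degree $\vert f\vert$ in the sense of Definition~\ref{definition associative and commutative}.

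The only real content here is the sign bookkeeping, and the main obstacle is verifying that the naturality of $\otimes_A$ holds with no extra sign for morphisms of nonzero degree; I would confirm this by choosing semi-free resolutions of $A$ and of $M,N$ and checking the interchange law at the chain level, as sketched above. Once naturality is secured, every Koszul sign cancels and the factor $(-1)^{\vert f\vert\vert\bar\mu_M\vert}$ predicted by Definition~\ref{definition associative and commutative} comes entirely from the hypothesis, which gives the result.
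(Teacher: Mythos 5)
Your proposal is correct and is essentially the paper's own proof: the paper establishes the same identity via a two-square diagram whose left square is the naturality of $\otimes_A$ (commuting exactly, with the interchange sign $(-1)^{\vert f\vert\vert g\vert}$ absorbed by the Koszul convention in $HH^*(A,f)^{\otimes 2}$) and whose right square commutes up to $(-1)^{\vert f\vert\vert\bar\mu_M\vert}$ by the hypothesis. You have merely written the same two steps as a chain of equalities rather than as a diagram.
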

\begin{proof}
Consider the diagram
$$
\xymatrix@C35pt@R15pt{
HH^*(A,M)^{\otimes 2}\ar[r]^-{\otimes_A}\ar[d]_-{HH*(A,f)^{\otimes 2}}
&  HH^*(A,M\otimes_A
M)\ar[r]^-{HH^*(A,\bar{\mu}_M)}\ar[d]^-{HH*(A,f\otimes_A f)}
& HH^*(A,M)\ar[d]^{HH*(A,f)}\\
HH^*(A,N)^{\otimes 2}\ar[r]_-{\otimes_A}
&  HH^*(A,N\otimes_A N)\ar[r]_-{HH^*(A,\bar{\mu}_N)}
& HH^*(A,N)
}
$$
The left square commutes exactly since for $g$,
$h\in\text{Hom}_{A\otimes A^{op}}(B(A,A,A),M)$,
$$
(f\otimes_A f)\circ (g\otimes_A h)\circ c=(-1)^{\vert f\vert\vert g\vert}
\left((f\circ g)\otimes_A (f\circ h)\right)\circ c.
$$
By equation~(1), the right square commutes up to the sign $(-1)^{\vert f\vert\vert  \bar{\mu}_M\vert}$.
\end{proof}
\begin{ex}\label{examples naturality cup product}
(i) Let $A$ be an algebra. Let $M$ be an $A$-bimodule.
Let $\bar{\mu}_M\in\text{Ext}_{A\otimes A^{op}}^d(M\otimes_A
M,M)$. Denote by $\mathbb{M}:=s^{-d}M$ the $d$-desuspension of the
$A$-bimodule $M$: for $a$, $b\in A$ and $m\in M$,
$a(s^{-d}m)b:=(-1)^{d\vert a\vert}s^{-d}(amb)$~\cite[X.(8.4)]{MacLanehomology}.
Then the map $s^{-d}:M\rightarrow \mathbb{M}$ is an isomorphism of
$A$-bimodules of degree $d$.
Consider $\bar{\mu}_{\mathbb{M}}\in\text{Ext}_{A\otimes
  A^{op}}^0(\mathbb{M}\otimes_A \mathbb{M},\mathbb{M})$
such that in the derived category of $A$-bimodules,
$s^{-d}\circ \bar{\mu}_M=(-1)^{d\vert
  \bar{\mu}_M\vert}\bar{\mu}_{\mathbb{M}}\circ ( s^{-d}\otimes_A
s^{-d})$.
Then $HH^*(A,s^{-d}):HH^*(A,M)\rightarrow HH^*(A,\mathbb{M})$ is a morphism of
algebras of lower degree $d$.
In particular, by example~\ref{lem:important_degree} (ii),
the composite
$$
\mathbb{HH^*(A,M)}:=s^{-d}HH^*(A,M)\buildrel{s^{d}}\over\rightarrow
HH^*(A,M)
\buildrel{HH^*(A,s^{-d})}\over\rightarrow
HH^*(A,\mathbb{M})
$$
is an isomorphism of algebras of degree $0$.

(ii) Let $A$ be a commutative algebra. Let $M$ and $N$ be two $A$-modules.
Let $\mu_M\in\text{Ext}_{A^{\otimes 4}}^*(M\otimes
M,M)$ and 
$\mu_N\in\text{Ext}_{A^{\otimes 4}}^*(N\otimes
N,N)$.
Let $f\in\text{Ext}_{A}^*(M,N)$ such that in the derived
category of $A^{\otimes 4}$-modules,
$f\circ \mu_M=(-1)^{\vert f\vert\vert
  \mu_M\vert}\mu_N\circ (f\otimes f)$.  From Lemma~\ref{lem:generalizeddecompositioncup}(3) and
Proposition~\ref{naturality generalized cup product},
since
$f\circ \bar{\mu}_M\circ q=(-1)^{\vert f\vert\vert
  \bar{\mu}_M\vert}\bar{\mu}_N\circ (f\otimes_A f)\circ q$,
$HH^*(A,f):HH^*(A,M)\rightarrow HH^*(A,N)$ is a morphism of
algebras of degree $\vert f\vert$.
\end{ex}

\begin{lem}\label{lem:dualsign}
{\em (i)} For a commutative diagram of graded ${\mathbb K}$-modules 
$$
\xymatrix@C20pt@R15pt{
A \ar[r]^-{f} \ar[d]_-{h} & B \ar[d]^-{k}\\
C \ar[r]_-{g} & D, 
}
$$
the square 
$$
\xymatrix@C65pt@R15pt{
A^{\vee} & B^{\vee} \ar[l]_-{f^{\vee}}\\
C^{\vee} \ar[u]^-{h^{\vee}} & D^{\vee} \ar[u]_-{k^{\vee}} \ar[l]^-{(-1)^{|f||k|+|g||h|}g^{\vee}} 
}
$$
is commutative. \\
{\em (ii)} Let $f:A\to B$ and $g:C\to D$ be maps of graded ${\mathbb K}$-modules. Then, the square is commutative:
$$
\xymatrix@C20pt@R15pt{
A^{\vee}\otimes C^{\vee} \ar[r]^-{f^{\vee}\otimes g^{\vee}} \ar[d]_-{\cong} & B^{\vee}\otimes D^{\vee} \ar[d]^-{\cong}\\
(A\otimes C)^{\vee} \ar[r]_-{(f\otimes g)^{\vee}} & (B\otimes D)^{\vee}.
}
$$
\end{lem}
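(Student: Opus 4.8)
The plan is to reduce both parts to a single sign rule describing how dualization interacts with composition: for graded maps $\alpha\colon U\to V$ and $\beta\colon V\to W$ one has $(\beta\circ\alpha)^\vee=(-1)^{\vert\alpha\vert\vert\beta\vert}\alpha^\vee\circ\beta^\vee$, where the dual of a graded map $f$ is normalized, consistently with the sign appearing in the definition of $\psi_M$ and of $\Theta$ used above, by $f^\vee(\varphi)=(-1)^{\vert f\vert\vert\varphi\vert}\varphi\circ f$. First I would record this composition rule; its verification is a one-line Koszul bookkeeping, since evaluating both sides on a functional $\psi$ produces $\psi\circ\beta\circ\alpha$ up to the sign $(-1)^{\vert\alpha\vert\vert\beta\vert}$ that comes from commuting $\alpha^\vee$ past the degree $\vert\beta\vert$ of $\beta^\vee(\psi)$.

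Granting this, part (i) is immediate. The hypothesis is the equality of maps $k\circ f=g\circ h\colon A\to D$. Applying $(-)^\vee$ and the composition rule to each side gives $(-1)^{\vert f\vert\vert k\vert}f^\vee\circ k^\vee=(k\circ f)^\vee=(g\circ h)^\vee=(-1)^{\vert g\vert\vert h\vert}h^\vee\circ g^\vee$, whence $f^\vee\circ k^\vee=(-1)^{\vert f\vert\vert k\vert+\vert g\vert\vert h\vert}h^\vee\circ g^\vee$. This is precisely the commutativity of the dualized square with the stated sign $(-1)^{\vert f\vert\vert k\vert+\vert g\vert\vert h\vert}$ carried on the bottom arrow, so no further work is needed.

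For part (ii) I would argue by direct evaluation on elements, the point being that here \emph{no} residual sign survives, so it is worth displaying the cancellation. Write $\gamma\colon X^\vee\otimes Y^\vee\to(X\otimes Y)^\vee$ for the canonical map $\gamma(\varphi\otimes\psi)(x\otimes y)=(-1)^{\vert\psi\vert\vert x\vert}\varphi(x)\psi(y)$, i.e. the same sign convention as $\Theta$, and take $\varphi,\psi$ with $a\otimes c$ in the source. Expanding one composite with $f^\vee(\varphi)=(-1)^{\vert f\vert\vert\varphi\vert}\varphi\circ f$ together with the Koszul sign in $f^\vee\otimes g^\vee$, and the other composite with the Koszul sign in $(f\otimes g)(a\otimes c)=(-1)^{\vert g\vert\vert a\vert}f(a)\otimes g(c)$, both paths reduce to the scalar $\varphi(f(a))\,\psi(g(c))$ times a power of $-1$. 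The main obstacle, and really the only computation in the lemma, is to check that the two exponents agree modulo $2$; they differ exactly by $2\vert f\vert\vert\psi\vert$, which is even, so the two signs coincide and the square commutes on the nose.
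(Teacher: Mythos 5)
Your proof is correct and follows the same route as the paper: part (i) is exactly the paper's argument, namely the contravariance sign rule $(\beta\circ\alpha)^\vee=(-1)^{\vert\alpha\vert\vert\beta\vert}\alpha^\vee\circ\beta^\vee$ coming from the convention $f^\vee(\varphi)=(-1)^{\vert f\vert\vert\varphi\vert}\varphi\circ f$, applied to both sides of $k\circ f=g\circ h$. The paper in fact omits the verification of part (ii) entirely, so your explicit element-wise check (with the two exponents differing by the even term $2\vert f\vert\vert\psi\vert$) supplies a detail the paper leaves to the reader; it is correct.
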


\begin{proof}
(i) By definition~\cite[0.1 (7)]{Tanre}, $f^\vee(\varphi)=(-1)^{\vert\varphi\vert\vert
  f\vert}\varphi\circ f$. Therefore $(g\circ h)^\vee=(-1)^{\vert g\vert\vert
  h\vert}h^\vee\circ g^\vee$.
\end{proof}
\medskip
\noindent
{\it Acknowledgments}  
The first author thanks Jean-Claude Thomas for a precious comment which helps him 
to understand Theorem \ref{thm:main_F-T}.


\begin{thebibliography}{99}

\bibitem{A-I} L. Avramov and S. Iyengar, Gorenstein algebras and Hochschild cohomology,
special volume in honor of Melvin Hochster, Michigan Math. J. 57 (2008), 17--35.

\bibitem{BGNX} Behrend, Kai; Ginot, Gr\'egory; Noohi, Behrang; Xu, Ping
String topology for stacks.
Astérisque No. 343 (2012),

\bibitem{Bredongeometry} Bredon G.,
Topology and Geometry,
Graduate Texts in Mathematics {\bf 139}, Springer-Verlag.

\bibitem{CartanEilenberg} H. Cartan and S. Eilenberg, Homological algebra, 
Princeton University Press, Princeton, N. J., 1956. 

\bibitem{C-S}M. Chas and D. Sullivan, 
String topology, preprint (math.GT/0107187).  
%
\bibitem{C-L}D. Chataur and J. -F. Le Borgne, 
Homology of spaces of regular loops in the sphere, 
Algebr. Geom. Topol. {\bf 9} (2009), 935-977. 
%
\bibitem{C-M}D. Chataur and L. Menichi, String topology of classifying spaces,  
J. Reine Angew. Math. {\bf 669} (2012), 1-45. 
%
%
%
%
\bibitem{C-J-Y}R. L. Cohen, J. D. S. Jones and J. Yan, 
The loop homology algebra of spheres and projective spaces. (English summary) 
Categorical decomposition techniques in algebraic topology (Isle of Skye, 2001), 77-92,
Progr. Math., 215, Birkh\"auser, Basel, 2004. 
%
\bibitem{C-G}R. L. Cohen and V. Godin, A polarized view of string topology, 
Topology, geometry and quantum field theory, 127-154,
London Math. Soc. Lecture Note Ser., 308, Cambridge Univ. Press, Cambridge, 2004. 
%
\bibitem{FHT_G}Y. F\'elix, S. Halperin and J. -C. Thomas, Gorenstein
	spaces. Adv. in Math. {\bf 71}(1988), 92-112.  
%
%
%
\bibitem{F-H-T}Y. F\'elix, S. Halperin and J. -C. Thomas,
Rational Homotopy Theory,
Graduate Texts in Mathematics {\bf 205}, Springer-Verlag.
%

\bibitem{F-T}Y. F\'elix and J. -C. Thomas, String topology on Gorenstein spaces, 
Math. Ann. {\bf 345}(2009), 417-452. 
%
\bibitem{F-T:rationalBV}Y. F\'elix and J. -C. Thomas, Rational
  BV-algebra in String topology, 
Bull. Soc. Math. France {\bf 136} (2) (2008), 311-327. 
%
\bibitem{F-T-VP}Y. F\'elix, J. -C. Thomas and M. Vigu{\'e}-Poirrier,
The Hochschild cohomology of a closed manifold.
Publ. Math. Inst. Hautes {\'E}tudes Sci. {\bf 99}(2004), 235-252.
%
\bibitem{Friedman} G.  Friedman, On the chain-level intersection pairing for PL pseudomanifolds,
 Homology, Homotopy Appl. {\bf 11} (2009), no. 1, 261-314. 
%
%
\bibitem{G-S} K. Gruher, P. Salvatore, 
Generalized string topology operations.
Proc. Lond. Math. Soc. (3) 96 (2008), no. 1, 78-106. 

%
\bibitem{G-M}V. K. A. M. Gugenheim and J. P. May, 
On the Theory and Applications of
	Differential Torsion Products, Memoirs of Amer. Math. Soc. 
{\bf 142} 1974.
%
%
%
%
%
%
%
\bibitem{KleinPoincare} J. Klein, Fiber products, Poincar\'e duality and $A_\infty$-ring spectra, Proc. Amer. Math. Soc. 134 (2006), no. 6, 1825-1833.

%
\bibitem{Kuri2011} K. Kuribayashi, The Hochschild cohomology ring of 
the singular cochain algebra of a space,  Ann.  Inst.  Fourier, Grenoble {\bf 61} (2011), 1779-1805. 
arXiv:{\tt math.AT/1006.0884}. 
%
\bibitem{K-M-N2} K. Kuribayashi and L. Menichi, Behavior of the Eilenberg-Moore spectral sequence in derived string topology, preprint (2013). 
%
\bibitem{K-L} K. Kuribayashi and L. Menichi, On the loop (co)product on the classifying space of a Lie group, in 
preparation. 
%
\bibitem{KMnoetherian} K. Kuribayashi and L. Menichi, Loop products on Noetherian H-spaces, in 
preparation.
%
%
\bibitem{L-SaskedbyFelix} P. Lambrechts and D. Stanley, Algebraic models of Poincar\'e embeddings, 
Algebr. Geom. Topol. {\bf 5} (2005), 135-182.

\bibitem{L-S} P. Lambrechts and D. Stanley, Poincar\'e duality and
	commutative differential graded algebras,
        Ann. Sci. \'Ec. Norm. Sup\'er. (4) {\bf 41}(2008), 495-509. 

\bibitem{LeB} J -F. Le Borgne, The loop-product spectral sequence, 
Expo. Math. {\bf 26}(2008), 25-40. 

\bibitem{MacLanehomology} S. Mac Lane, Homology, 
Die Grundlehren der mathematischen Wissenschaften, Bd. 114 Academic Press, Inc., 
Publishers, New York; Springer-Verlag, Berlin-G\"ottingen-Heidelberg, 1963. 
%
\bibitem{Mccleary} J. McCleary,
A user's guide to spectral sequences. Second edition. Cambridge University Press, 2001. 
%
%
\bibitem{McClure} J. McClure, On the chain-level intersection pairing for PL manifolds,
Geom. Topol. {\bf 10} (2006), 1391-1424.
%
\bibitem{Me} L. Meier, Spectral sequences in string topology,  
Algebr. Geom. Topol.{\bf 11}(2011), 2829-2860. 
%
\bibitem{Menichi_BV_Hochschild} L. Menichi, 
Batalin-Vilkovisky algebra structures on Hochschild Cohomology, Bull. Soc. Math. France 137 (2009), no 2, 277-295.

\bibitem{Menichi} L. Menichi, Van Den Bergh isomorphism in string topology, 
J. Noncommut. Geom. {\bf 5} (2011), 69-105. 
%
%
\bibitem{Merkulov} Merkulov, S. A. De Rham model for string topology.
Int. Math. Res. Not. 2004, no. 55, 2955--2981. 
%
%
\bibitem{Murillo} A. Murillo, The virtual Spivak fiber, duality on fibrations and Gorenstein spaces,  Trans. Amer. Math. 
Soc. {\bf 359} (2007),  3577-3587. 
%
\bibitem{Naito}T. Naito, On the mapping space homotopy groups and the free loop space homology groups, 
Algebr. Geom. Topol. {\bf 11} (2011), 2369-2390. 
%
\bibitem{S-G} S. Siegel, S. Witherspoon, the Hochschild cohomology
  ring of a group algebra, Proc. London Math. Soc. (3) {\bf
    79},(1999), no 1, 131-157.
%
\bibitem{S}S. Shamir,  
A spectral sequence for the Hochschild cohomology of a coconnective dga, 
preprint (2010), arXiv:{\tt math.KT/1011.0600v2}. 
%
\bibitem{Spanier}  Spanier, Edwin H. Algebraic topology. Corrected reprint. Springer-Verlag, New York-Berlin, 1981.
%
\bibitem{Sweedler} M. E. Sweedler, Hopf algebras, Mathematics Lecture Note Series, W. A. Benjamin, Inc., New York, 1969.
%
\bibitem{Tamanoi:cap products} Tamanoi, Hirotaka Cap products in string topology. Algebr. Geom. Topol. 9 (2009), no. 2, 1201-1224.
%
\bibitem{Tamanoi:stabletrivial}H. Tamanoi, Stable string operations are trivial, 
Int. Math. Res. Not. IMRN 2009, {\bf 24}, 4642-4685. 
%
\bibitem{T}H. Tamanoi, Loop coproducts in string topology and triviality of higher genus TQFT operations,  
J. Pure Appl. Algebra {\bf 214} (2010), 605-615. 
%
\bibitem{Tanre}  Tanr\'e, Daniel Homotopie rationnelle: mod\`eles de Chen, Quillen, Sullivan. Lecture Notes in Mathematics, 1025. Springer-Verlag, Berlin, 1983.
%
\end{thebibliography}
\end{document}